\documentclass[11pt]{article}

\usepackage[margin=1in]{geometry}
\usepackage{amsmath,amssymb,amsthm,mathtools}
\usepackage{enumitem}
\usepackage{hyperref}

\newtheorem{theorem}{Theorem}
\newtheorem{lemma}[theorem]{Lemma}
\newtheorem{proposition}[theorem]{Proposition}
\newtheorem{corollary}[theorem]{Corollary}
\theoremstyle{definition}

\theoremstyle{remark}
\newtheorem{remark}[theorem]{Remark}
\theoremstyle{conjecture}
\newtheorem{conjecture}[theorem]{Conjecture}

\newcommand{\Z}{\mathbb{Z}}
\newcommand{\abs}[1]{\left|#1\right|}

\title{Critical Numbers for Restricted Sumsets:
Rigidity and Collapse in Finite Abelian Groups}

\author{Bocong Chen$^1$ and Jing Huang$^2$\footnote{E-mail addresses: {\it
mabcchen@scut.edu.cn (B. Chen),
jhuangmath@gzhu.edu.cn (J. Huang).
Corresponding author: Jing Huang.
}}
}

\date{\small
$1.$ School of Mathematics, South China University of Technology, Guangzhou 510641, China\\
$2.$
School of Mathematics and Information Science,
Guangzhou University, Guangzhou 510006, China\\
}

\begin{document}
\maketitle

\begin{abstract}
This paper establishes a classification of the
critical numbers for restricted sumsets in
finite abelian groups, determining them
exactly for even-order groups and bounding them
for odd-order groups, while revealing a
fundamental structural dichotomy governed by parity.
For groups of even order,
we prove a universal rigidity theorem:
the index-$2$ subgroup creates an immutable
arithmetic barrier at density $1/2$,
fixing the critical number at
$|G|/2+1$ regardless of the group's
internal structure. In sharp contrast,
we demonstrate that for groups of odd order,
this barrier vanishes,
causing the critical threshold to collapse to
significantly lower densities bounded by index-$5$
obstructions or the smallest prime divisor.
These results unify and vastly generalize
previous work on cyclic groups,
providing a definitive structural theory
for the transition from sparsity to saturation.
As a decisive application, we resolve a conjecture of
Han and Ren in algebraic coding theory.
By translating the additive rigidity at
density $1/2$ into a geometric constraint,
we prove that for all sufficiently large $q$,
any subset of rational points on an elliptic curve
$E/\mathbb{F}_q$ generating an MDS code must
satisfy the tight bound $|P|\le|E(\mathbb{F}_q)|/2$.

\medskip
\textbf{2020 MSC:} 11B75, 11P70, 20K01, 14H52.

\textbf{Keywords:} Restricted sumsets; critical number;
finite abelian groups; elliptic curves;  MDS codes.
\end{abstract}

\author{}
\date{}
\maketitle

\section{Introduction}
Additive combinatorics studies the interplay between
algebraic structure and the combinatorial behaviour of sumsets.
From the early nineteenth-century inequality
of Cauchy and Davenport in $\mathbb{Z}/p\mathbb{Z}$ to
Kneser's theorem in general abelian groups,
a recurring theme is that  density forces expansion; see, e.g., \cite{Babook,Gry13,Kneser1953,Lev00,Na96,TaoVu2006}.
A particularly subtle and influential variant concerns
restricted sumsets, where summands must be distinct.
This restriction dates back  at least to the celebrated
conjecture of Erd\H{o}s and Heilbronn \cite{EG80},
which sought sharp lower bounds on the size of the
set of sums $a_1+a_2$ with distinct summands
taken from a fixed subset of $\mathbb{Z}/p\mathbb{Z}$.
Dias da Silva and Hamidoune resolved the conjecture
in 1994 using  exterior algebra \cite{DH94},
and shortly thereafter Alon, Nathanson and Ruzsa gave a
polynomial-method proof via the
Combinatorial Nullstellensatz \cite{ANR96}.
Recently, Du and Pan \cite{Du} generalized the
Dias da Silva--Hamidoune theorem to arbitrary finite abelian groups.
These breakthroughs revealed that restricted
sumsets still expand robustly,
but exhibit new obstructions and new extremal
configurations absent
in the unrestricted setting.

While establishing sharp lower bounds on cardinality
is a cornerstone in the theory,
a deeper question concerns the arithmetic completeness of these sumsets.
To formalize this progression,
let $G$ be a finite abelian group and let
$A\subseteq G$.
For an integer $k$ with $0\le k\le |A|$, we write
\[
\Gamma_k(A)=\Bigl\{\alpha_1+\cdots+\alpha_k:
\ \alpha_1,\dots,\alpha_k\in A\ \text{pairwise distinct}\Bigr\}.
\]
 As the length $k$ or the size of $A$ increases,
 the sumset $\Gamma_k(A)$
 transitions from sparse to structured,
 ultimately achieving full coverage of the group.
In their influential classification program for subset sums in
$  \mathbb{Z}/p\mathbb{Z}  $ \cite{NV09}, Nguyen and Vu
formalized this phenomenon into a fundamental classification problem,
 deemed   pivotal
 in additive combinatorics due to its ties to
 both expansion and structural obstructions:
$$
\hbox{For a given $k$, when is $0\in\Gamma_k(A)$
and when is $G=\Gamma_k(A)$?}
$$
This question delineates the additive spectrum,
from local zero-sum solvability to global
representation universality. While the former
drives zero-sum theory,
the latter probes the covering property.
In this paper, we concentrate on the covering aspect:
when does $\Gamma_k(A)=G$,
and how large must $A$ be to guarantee it?
To quantify these thresholds, we employ the (restricted)
\emph{$k$-critical number}
\[
\mu_k(G)
=\min\Bigl\{m\in\mathbb N:\ \text{every }A\subseteq
G\text{ with }|A|\ge m\text{ satisfies }\Gamma_k(A)=G\Bigr\}.
\]

The study of critical numbers has a rich history in combinatorial number theory, as detailed in the systematic treatment by Bajnok \cite{Ba15}. It is well-established that the subgroup structure imposes fundamental barriers on expansion. In groups of even order, the index-$2$ subgroup creates an intrinsic obstruction at density $1/2$. This rigidity was initially characterized by Gallardo, Grekos, et al.  \cite{Ga}, who established that in cyclic groups of even order, the restricted sumset $\Gamma_3(A)$ covers the entire group whenever $|A| > n/2$. Building on their method, Bajnok \cite{Ba15} generalized this result, determining the exact critical number for restricted sumsets of arbitrary length $k$ in even-order cyclic groups. However, prior investigations have highlighted a sharp divergence in the odd-order setting. In the absence of index-$2$ subgroups, the obstruction at density $1/2$ vanishes. Notably, for $k=3$, Lev \cite{Lev02} proved (confirming a conjecture of Gallardo, Grekos, et al.) that the critical threshold collapses to substantially lower densities, governed by index-$5$ configurations rather than index-$2$.

Our first main theorem establishes a universal stability above the highest possible obstruction. While odd-order groups may admit lower thresholds (as suggested by Lev's work), the density $1/2$ remains the absolute barrier for the class of all finite abelian groups (due to the even-order case). We show that once a subset $A$ crosses this maximal index-$2$ barrier, the restricted sumset $\Gamma_k(A)$ covers the entire group $G$ simultaneously for all admissible lengths $k$, regardless of the specific group structure.

\medskip
\noindent\textbf{Theorem A.}\textit{
Let $G$ be a finite abelian group of order $g$,
and let $p(G)$ denote the smallest prime divisor of $g$.
Let $
G[2]=\big\{\alpha\in G:\ 2\alpha=0\big\}
$
denote the $2$-torsion subgroup of $G$.
Assume that $g$ satisfies one of the following conditions:
If $p(G)=2$, assume $g \ge 624\,
    \lvert G[2] \rvert + 1846$;
   if $p(G)=3$, assume $g \ge 3705$;
    if $p(G)=5$, assume $g \ge 6175$;
   if $p(G)\ge 7$, assume $g \ge 46319$.
\medskip
If $A \subseteq G$ satisfies $\lvert A \rvert > g/2$, then
\[
    \Gamma_k(A) = G \qquad \text{for every integer }
    k \text{ with } 3 \le k \le \lvert A \rvert - 3.
\]}
With the global sufficiency of the $1/2$-density established in Theorem A, a natural follow-up question is to identify the exact location of these thresholds when they drop below $1/2$. Our second main theorem addresses this by determining (or bounding) the critical numbers $\mu_k(G)$, confirming the parity-dependent dichotomy for general $k$. Significantly,
for even order, we generalize Bajnok's cyclic result to arbitrary abelian groups, confirming that the index-$2$ obstruction is unique; for odd order, we quantify the collapse of the critical density predicted by Lev's triple-sum result.

\medskip
\noindent\textbf{Theorem B.}\textit{
Let $G$ be a finite abelian group of order $g$.
Let $k$ be an integer satisfying the range conditions associated with the smallest prime divisor $p(G)$ of $g$ such that $3 \le k \le g/p(G)-2$.
\begin{enumerate}[label=\textup{(\roman*)}, leftmargin=2em]
    \item If $g$ is even and $g\ \ge\ 624\,|G[2]|\;+\;1846$, then the $k$-critical number is exactly
    \[
    \mu_k(G) = \frac{g}{2} + 1.
    \]
    \item If $g$ is odd, define the density constant
    \[
    c(g) = \begin{cases}
    2/5 & \text{if } 5 \mid g, \\
    5/13 & \text{if } 5 \nmid g.
    \end{cases}
    \]
Then the $k$-critical number satisfies the upper bound
\[
\mu_k(G) \le \begin{cases}
  \bigl\lfloor c(g)\,g\bigr\rfloor\ +\ 9 & \text{if } p(G)=3~\text{and}~g\geq3\cdot46319, \\[2pt]
\bigl\lfloor c(g)\,g\bigr\rfloor\ +\ 21 & \text{if } p(G)=5~\text{and}~g\geq5\cdot1235, \\[2pt]
\bigl\lfloor c(g)\,g\bigr\rfloor\ +\ 3 & \text{if } p(G)\geq7~\text{and}~g\geq1235. \\[2pt]
    \end{cases}
    \]
\end{enumerate}}

The precise determination of these thresholds has immediate consequences beyond additive combinatorics. In Section~\ref{sec:mds-elliptic}, we demonstrate a striking application to algebraic coding theory.
Let $E/\mathbb{F}_q$ be an elliptic curve and write $G=E(\mathbb{F}_q)$ for its group of $\mathbb{F}_q$-rational points.
Given a set $P\subseteq G$ of evaluation points and a divisor $D$ of degree $k$, the associated elliptic code $C_{\mathcal L}(D,P)$ has dimension $k$ and is MDS if it attains the Singleton bound; see \cite{Ball,Si09,Sti09}.
A key observation of Han and Ren \cite{HR24} is that the MDS property can be characterized by a restricted sumset condition in $G$:
The code fails to be MDS precisely when the point determined by $D$ lies in $\Gamma_k(P)$ (Lemma~\ref{lem:HR-MDS-criterion} below).
Using this criterion, Han and Ren proved that for $3<k<|P|-3$ one has $|P|\le (|G|+5)/2$ and conjectured that the additive constant can be removed for all sufficiently large $q$ \cite{HR24}.

Theorem A directly resolves this conjecture:
for $q$ large enough that $|E(\mathbb{F}_q)|$ meets the quantitative hypotheses of Theorem~A, any subset $P\subseteq E(\mathbb{F}_q)$ with $|P|>|G|/2$ necessarily satisfies $\Gamma_k(P)=G$ for all $3\le k\le |P|-3$, forcing the code to be non-MDS.
Thus the conjectural bound $|P|\le |E(\mathbb{F}_q)|/2$ holds for all sufficiently large $q$, and the additive-combinatorial rigidity at density $1/2$ becomes a structural constraint on MDS elliptic codes.

\medskip
The classification of critical numbers and the rigidity-collapse mechanism constitute a significant advance in additive combinatorics, shifting the focus from density-based bounds to structural characterizations of the subgroup lattice. Furthermore, the techniques developed here---particularly the handling of subgroup interference in the inverse setting---may have broader implications for related fields. In particular, the refined control over subset sums is naturally applicable to problems in zero-sum theory (e.g., variations of the Davenport constant), and the structural constraints imposed by small-index subgroups suggest potential connections to bounding character sums in group representation theory.

The paper is organized as follows.
In Section~\ref{pre} we recall basic notation, establish a normalization lemma for prime quotients, and collect the structural results we use (Dias da Silva--Hamidoune \cite{DH94}, Lev \cite{Lev02}, and Devos--Goddyn--Mohar \cite{De}).
Section~\ref{subsec:proof_A} proves Theorem~A by treating the cases $p(G)\in\{2,3,5\}$ explicitly and then
 performing an inductive argument for $p(G)\ge 7$.
Section~\ref{subsec:proof_B} proves Theorem~B and develops the rigidity/collapse dichotomy for critical numbers.
Finally, Section~\ref{sec:mds-elliptic} applies Theorem~A to MDS elliptic codes and resolves the conjecture of Han and Ren \cite{HR24} for all sufficiently large $q$.

\section{Preliminaries}
\label{pre}
Let $G$ be a finite abelian group of order $g$.
For a subset $A\subseteq G$ and an integer $k$ such that
 $0\le k\le |A|$, we
define the restricted $k$-sumset as
\[
\Gamma_{k}(A)=\Bigl\{\alpha_1+\cdots+\alpha_k:\ \alpha_1,\dots,
\alpha_k\in A\ \text{pairwise distinct}\Bigr\}\subseteq G,
\]
with the convention that   $\Gamma_0(A)=\{0\}$.
The following elementary lemma exhibits
a symmetry property of restricted sumsets.
\begin{lemma}
\label{lem:comp}
Let $G$ be a finite abelian group and let $A \subseteq G$
be a subset of size $a$. Let
$\overline{A} = \sum_{\alpha \in A} \alpha$
denote the sum of the elements of $A$. Then,
for every integer $k$ satisfying $0 \le k \le a$,
we have the identity
\[
\Gamma_k(A) = \overline{A} - \Gamma_{a-k}(A).
\]
Consequently, $\Gamma_k(A) = G$ if and only if $\Gamma_{a-k}(A) = G$. In particular, if $\Gamma_k(A) = G$ for all $1 \le k \le \lfloor a/2 \rfloor$, then $\Gamma_k(A) = G$ for all $1 \le k \le a-1$.
\end{lemma}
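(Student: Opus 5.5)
The plan is to prove the identity by a bijection between $k$-subsets of $A$ and their complements, and then read off the two consequences.

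\textbf{The identity.} Fix $k$ with $0\le k\le a$. For a $k$-element subset $S\subseteq A$, write $\sigma(S)=\sum_{\alpha\in S}\alpha$. By definition, $\Gamma_k(A)=\{\sigma(S): S\subseteq A,\ |S|=k\}$, since pairwise distinct summands are the same as a $k$-subset. The map $S\mapsto A\setminus S$ is a bijection from $k$-subsets of $A$ onto $(a-k)$-subsets of $A$. Since $A$ is the disjoint union of $S$ and $A\setminus S$,
\[
\sigma(S)=\overline{A}-\sigma(A\setminus S).
\]
Hence
\[
\Gamma_k(A)=\{\overline{A}-\sigma(T): T\subseteq A,\ |T|=a-k\}=\overline{A}-\Gamma_{a-k}(A).
\]
The boundary cases are consistent with the convention $\Gamma_0(A)=\{0\}$: when $k=a$ we get $\Gamma_a(A)=\{\overline{A}\}=\overline{A}-\{0\}$, and when $k=0$ the identity reads $\{0\}=\overline{A}-\{\overline{A}\}$.

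\textbf{Consequences.} The map $x\mapsto \overline{A}-x$ is a bijection of $G$. Therefore $|\Gamma_k(A)|=|\Gamma_{a-k}(A)|$, and so $\Gamma_k(A)=G$ if and only if $\Gamma_{a-k}(A)=G$.

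For the final claim, suppose $\Gamma_k(A)=G$ for all $1\le k\le\lfloor a/2\rfloor$, and take any $k$ with $\lfloor a/2\rfloor<k\le a-1$. Then $k'=a-k$ satisfies $1\le k'<a-\lfloor a/2\rfloor=\lceil a/2\rceil$, so $k'\le\lfloor a/2\rfloor$. By hypothesis $\Gamma_{k'}(A)=G$, and by the equivalence above $\Gamma_k(A)=G$.

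There is no real obstacle here. The only care needed is the index bookkeeping at the boundary, namely checking that $k'\le\lfloor a/2\rfloor$ for both parities of $a$ (this holds since $\lceil a/2\rceil-1\le\lfloor a/2\rfloor$), and the consistency of the $k=0$ convention checked above.
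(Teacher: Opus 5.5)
Your proposal is correct and follows the paper's proof: both use the complement bijection between $k$-subsets and $(a-k)$-subsets of $A$, together with $\sum_{\beta\in B}\beta=\overline{A}-\sum_{\tau\in A\setminus B}\tau$. You also spell out the index check for the final claim, $a-k\le\lceil a/2\rceil-1\le\lfloor a/2\rfloor$, which the paper leaves implicit.
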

\begin{proof}
If $B\subseteq A$ has $|B|=k$ and
$T=A\setminus B$ has $|T|=a-k$, then
$\sum_{\beta\in B}\beta=\overline{A}-
\sum_{\tau\in T}\tau$.
Taking all $k$-subsets $B$ gives
$\Gamma_k(A)\subseteq \overline{A}-\Gamma_{a-k}(A)$.
Conversely, every $(a-k)$-subset $T$ arises as a complement of a $k$-subset,
so the reverse inclusion holds as well.
\end{proof}

While Lemma \ref{lem:comp} allows us to exploit
symmetry to reduce the range of $k$,
our structural analysis will frequently rely
on projecting the set $A$ onto a quotient
group of prime order. To facilitate the inductive
arguments later, it is convenient to normalize
the position of $A$ via translation so that
its densest fiber resides within the kernel.
\begin{lemma}
\label{lem:normal}
Let $p$ be a prime divisor of $|G|$ and let
$\pi:G\twoheadrightarrow \mathbb{Z}/p\mathbb{Z}$ be
a surjective homomorphism with kernel $H$ of size $h$.
If $A\subseteq G$ satisfies $|A| > |G|/2$,
then there exists a translate $A' = A - g_0$ such that
\begin{equation}\label{eq:norm}
A'_0 \subseteq H, \qquad |A'_0| =
\max_{r} |A'_r|, \qquad \text{and} \quad |A'_0| > h/2,
\end{equation}
where $A'_r = A' \cap \pi^{-1}(r)$.
Moreover, for any $1\leq k\leq |A|$,
$\Gamma_k(A')=G$ if and only if $\Gamma_k(A)=G$.
\end{lemma}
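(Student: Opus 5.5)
The plan is a direct argument: use pigeonhole to locate a dense fiber, then translate so that this fiber lands in $H$. There is no real obstacle. The only point needing care is to check that translation permutes the fibers of $\pi$ without changing their sizes, and that it acts on $\Gamma_k$ by a translation.

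First, the fibers. Write $A_r=A\cap\pi^{-1}(r)$ for $r\in\mathbb{Z}/p\mathbb{Z}$. The cosets $\pi^{-1}(r)$ partition $G$ into $p$ sets of size $h$, where $|G|=ph$. So $|A|=\sum_r|A_r|$. Choose $r^*$ with $|A_{r^*}|=\max_r|A_r|$. Pigeonhole then gives $p\,|A_{r^*}|\ge |A|>ph/2$, hence $|A_{r^*}|>h/2$. In particular $A_{r^*}\neq\emptyset$.

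Second, the translation. Pick any $g_0\in\pi^{-1}(r^*)$, for instance an element of $A_{r^*}$, and set $A'=A-g_0$. Since $\pi$ is a homomorphism, $\pi(\alpha-g_0)=\pi(\alpha)-r^*$. Therefore
\[
A'_r=A_{r+r^*}-g_0 \quad\text{for every } r,
\]
and so $|A'_r|=|A_{r+r^*}|$. Taking $r=0$ gives $A'_0=A_{r^*}-g_0\subseteq \pi^{-1}(0)=H$. It also gives $|A'_0|=|A_{r^*}|=\max_r|A'_r|$, because $r\mapsto r+r^*$ is a bijection of $\mathbb{Z}/p\mathbb{Z}$. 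Finally $|A'_0|>h/2$ by the first step, so all three conditions in \eqref{eq:norm} hold.

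Third, the statement about $\Gamma_k$. The map $\alpha\mapsto\alpha-g_0$ is a bijection $A\to A'$, so it sends $k$-tuples of pairwise distinct elements of $A$ to $k$-tuples of pairwise distinct elements of $A'$, and conversely. Hence
\[
\Gamma_k(A')=\Gamma_k(A)-kg_0 .
\]
Translation by $-kg_0$ is a bijection of $G$, so $\Gamma_k(A')=G$ if and only if $\Gamma_k(A)=G$, for every $1\le k\le|A|$.
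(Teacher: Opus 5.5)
Your proof is correct and follows essentially the same route as the paper: pigeonhole gives a fiber of size greater than $h/2$, translating by an element of that coset moves it into $H$ while only permuting fiber sizes, and $\Gamma_k(A')=\Gamma_k(A)-kg_0$ gives the covering equivalence. You also spell out the fiber identity $A'_r=A_{r+r^*}-g_0$, which the paper leaves implicit.
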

\begin{proof}
Let $a_r = |A \cap \pi^{-1}(r)|$.
Since the fibers partition $A$, we have $\sum_{r} a_r = |A|$.
The hypothesis $|A| > ph/2$ implies, by the pigeonhole principle,
that the maximum fiber size satisfies $\max_r a_r > h/2$.
Let $r_0$ be an index achieving this maximum.
Choose any $g_0 \in \pi^{-1}(r_0)$ and set $A' = A - g_0$.
This translation shifts the maximal fiber to the kernel ($r=0$), satisfying \eqref{eq:norm}.
Finally, since $\Gamma_k(A') = \Gamma_k(A) - k g_0$,
the property of covering $G$ is invariant under translation.
\end{proof}

With the normalization machinery established,
we now turn to the structural tools required to verify
the covering property. While symmetry arguments
(Lemma \ref{lem:comp}) effectively handle large values
of $k$, the behavior for small $k$ requires deeper
insights into the algebraic structure of the set.
Specifically, we rely on Lev's celebrated result
\cite{Lev02} regarding restricted triple sums;
we use the explicit numerical form recorded
by Bajnok \cite[Theorem 21]{Ba15}. To state this result, let
$$
G[2]=\big\{\alpha\in G:\ 2\alpha=0\big\}
$$
denote the $2$-torsion subgroup of $G$.
\begin{lemma}
\label{lem:Lev}
Let $G$ be a finite abelian group of order $g$ satisfying
$
g \ge 312\,\abs{G[2]} + 923.
$
Then for every subset $A\subseteq G$, at least one of the following holds:
\begin{enumerate}[leftmargin=2.0em]
\item[\textup{(i)}] $\abs{A}\le \frac{5}{13}g$;
\item[\textup{(ii)}] $A$ is contained in a coset of an index-$2$
subgroup of $G$;
\item[\textup{(iii)}] $A$ is contained in a union of
two cosets of an index-$5$ subgroup of $G$;
\item[\textup{(iv)}] $\Gamma_{3}(A)=G$.
\end{enumerate}
\end{lemma}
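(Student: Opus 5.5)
The plan is not to re-prove this from scratch. The lemma is a restatement of Lev's theorem on restricted triple sums \cite{Lev02}, in the explicit numerical form given by Bajnok \cite[Theorem~21]{Ba15}. So the proof consists of matching notation and hypotheses, with a short sketch of why the structure is what it is.

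\emph{Translation.} Bajnok writes $3\hat{\ }A$ for the set of sums of three distinct elements of $A$, which is exactly our $\Gamma_3(A)$. His theorem assumes $|G|\ge 312\,|G[2]|+923$ and $|A|>\frac{5}{13}|G|$. Under these assumptions it concludes that either $3\hat{\ }A=G$, or $A$ lies in a coset of an index-$2$ subgroup, or $A$ lies in a union of two cosets of an index-$5$ subgroup. Negating the density hypothesis gives alternative (i), and the three conclusions are (iv), (ii) and (iii). I would check one point carefully: that "contained in a coset" in Bajnok's statement really means a single coset of an index-$2$ subgroup, and not the complementary situation (a union of both cosets). The latter is excluded anyway, since then $A$ would not be obstructed.

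\emph{Sketch of Lev's argument.} This is included only to explain the constants. Suppose $x\notin\Gamma_3(A)$ and $|A|>\frac{5}{13}g$. Then for every $a\in A$ the element $x-a$ is not a restricted $2$-sum from $A\setminus\{a\}$. Comparing the restricted sumset $A\hat{+}A$ with the full sumset $A+A$, the only sums lost are those of the form $2a$ that have a unique representation. The number of such sums is controlled by $|G[2]|$, which is where the hypothesis $g\ge 312|G[2]|+923$ enters.

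Kneser's theorem applied to $A+A$ and to $(x-A)$ then shows the following. If $A+A$ misses many elements, then $A+A$ is periodic with respect to a subgroup $H$ of small index, and $A$ is concentrated on few $H$-cosets. A case analysis over the index $[G:H]$ then leaves two surviving configurations: index $2$ with one coset, and index $5$ with two cosets. In the latter, the density $2/5>5/13$ is permitted, while two cosets $\{0,1\}$ of $\mathbb{Z}/5$ never sum three times to cover everything. The threshold $5/13$ comes from the boundary case balancing these counts.

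\emph{Main obstacle.} The delicate part, were one to redo it, is the Kneser-stability step together with the error term from the elements $2a$. Here I would simply defer to \cite{Lev02} and \cite{Ba15}, since the paper uses the lemma as a black box.
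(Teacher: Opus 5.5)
Your proposal takes the same approach as the paper. The paper gives no independent proof either: it imports Lev's theorem in the explicit numerical form of Bajnok's Theorem~21, and your matching (Bajnok's restricted triple sumset is $\Gamma_3(A)$, the negated density hypothesis is (i), and the three conclusions are (iv), (ii), (iii)) is exactly what that requires. Your heuristic sketch is not needed and can be dropped; if you keep it, correct one point: $|G[2]|$ does not bound the number of lost sums $2a$, it bounds the number of representations of each lost sum, since every such representation is $b+b$ with $b$ in a single coset of $G[2]$.
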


While Lemma \ref{lem:Lev} identifies structural
obstructions in general groups, our analysis of
odd-order groups frequently relies on projections onto
quotients of prime order. To handle these projections,
we employ the Dias da Silva--Hamidoune theorem \cite{DH94}
(solving a conjecture made by Erd\H{o}s and Heilbronn \cite{EG80}),
which provides a sharp lower bound on the cardinality of
restricted sumsets in $\mathbb{Z}/p\mathbb{Z}$.
The result was reestablished, using different methods,
e.g. see  \cite[Theorem 3.3]{ANR96}
and \cite{Na96}.

\begin{lemma}
\label{thm:DSH}
Let $p$ be a prime and let
$A\subseteq \mathbb Z/p\mathbb Z$.
For every integer $k$ with $1\le k\le |A|$,
the restricted $k$-sumset satisfies
\[
|\Gamma_k(A)|\ \ge\ \min\{p,\ k|A|-k^2+1\}.
\]
In particular, if $k|A|-k^2+1\ge p$,
then $\Gamma_k(A)=\mathbb Z/p\mathbb Z$.
\end{lemma}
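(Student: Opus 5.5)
The plan is to use the polynomial method of Alon, Nathanson and Ruzsa, via the Combinatorial Nullstellensatz over the field $\mathbb{F}_p=\mathbb Z/p\mathbb Z$. Put $m=|A|$ and fix $k$ with $1\le k\le m$. Set $N=\min\{p-1,\ km-k^2\}$. The claim $|\Gamma_k(A)|\ge \min\{p,km-k^2+1\}$ is equivalent to $|\Gamma_k(A)|\ge N+1$. Suppose, for contradiction, that $|\Gamma_k(A)|\le N$. Then $\Gamma_k(A)\subseteq C$ for some $C\subseteq\mathbb F_p$ with $|C|=N$. Consider
\[
f(x_1,\dots,x_k)=\prod_{1\le i<j\le k}(x_j-x_i)\,\prod_{c\in C}\bigl(x_1+\cdots+x_k-c\bigr).
\]
For every $(a_1,\dots,a_k)\in A^k$, $f$ vanishes: either two coordinates coincide, or the $a_i$ are pairwise distinct and $a_1+\cdots+a_k\in\Gamma_k(A)\subseteq C$.

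Next, $f$ has degree $D=\binom{k}{2}+N$, and its top homogeneous part is $\prod_{i<j}(x_j-x_i)\,(x_1+\cdots+x_k)^N$. I will use the standard coefficient identity: for nonnegative integers $c_1,\dots,c_k$ with $\sum c_i=\binom k2+N$, the coefficient of $\prod x_i^{c_i}$ in $\prod_{i<j}(x_j-x_i)(\sum x_i)^N$ equals
\[
\frac{N!}{c_1!\cdots c_k!}\prod_{i<j}(c_j-c_i).
\]
I would prove this by expanding the Vandermonde determinant as $\sum_\sigma \operatorname{sgn}(\sigma)\prod_i x_i^{\sigma(i)-1}$ and collecting multinomial coefficients. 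The resulting alternating sum of falling factorials is again a Vandermonde determinant, which yields the product of differences.

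The key step is choosing exponents $0\le c_1<c_2<\cdots<c_k\le m-1$ with $\sum c_i=\binom k2+N$. Such a choice exists because the minimum possible sum is $\binom k2$ (take $0,1,\dots,k-1$), the maximum is $\sum_{i=1}^k(m-i)=\binom k2+km-k^2$, and every intermediate value is attained by raising the largest entries one unit at a time. Since $N\le km-k^2$, the target sum lies in this range. For this choice the coefficient is nonzero modulo $p$:
\begin{enumerate}[leftmargin=2em]
\item $N\le p-1$, so $N!\not\equiv 0$;
\item the $c_i$ are at most $m-1\le p-1$, so the factor $1/\prod c_i!$ makes sense in $\mathbb F_p$;
\item the differences $c_j-c_i$ lie in $[1,m-1]\subseteq[1,p-1]$, so none vanishes mod $p$.
\end{enumerate}

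Finally, the Combinatorial Nullstellensatz applies with $S_i=A$ and $|S_i|=m>c_i$. It gives $(a_1,\dots,a_k)\in A^k$ with $f(a_1,\dots,a_k)\neq0$, contradicting the vanishing established above. The \emph{in particular} statement is then immediate: if $km-k^2+1\ge p$, the bound gives $|\Gamma_k(A)|\ge p$. I expect the main obstacle to be the coefficient identity in the second step, specifically organizing the determinant manipulation cleanly. Everything else is bookkeeping, and truncating $N$ at $p-1$ handles the case where $km-k^2+1$ exceeds $p$ without passing to subsets.
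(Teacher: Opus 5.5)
Your proposal is correct and complete. It is the Alon--Nathanson--Ruzsa Combinatorial Nullstellensatz argument, which the paper does not reproduce; it treats the lemma as a black box and cites Dias da Silva--Hamidoune's exterior-algebra proof alongside this polynomial-method reproof (Theorem 3.3 of Alon, Nathanson and Ruzsa). Truncating $N$ at $p-1$ and choosing distinct exponents $c_i\le m-1$ that sum exactly to $\binom{k}{2}+N$ handles the saturated case without passing to subsets. The coefficient is nonzero mod $p$ simply because it times $\prod c_i!$ equals $N!\prod_{i<j}(c_j-c_i)$, which is prime to $p$.
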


Specializing the bound in Lemma \ref{thm:DSH}
to the high-density regime yields the following well known
result, which guarantees that the restricted
sumset covers the entire group.
We provide a proof below to keep the paper self-contained.
\begin{corollary}\label{cor:half_dense_prime}
Let $p\ge 13$ be a prime and let $A\subseteq \mathbb Z/p\mathbb Z$ satisfy $|A|>p/2$.
Then
\[
\Gamma_k(A)=\mathbb Z/p\mathbb Z\qquad\text{for every integer }k\text{ with }3\le k\le |A|-3.
\]
\end{corollary}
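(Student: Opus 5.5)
The plan is to combine the complement symmetry of Lemma~\ref{lem:comp} with the Dias da Silva--Hamidoune bound of Lemma~\ref{thm:DSH}. Write $a=|A|$, so $a\ge (p+1)/2$ because $p$ is odd. By Lemma~\ref{lem:comp}, $\Gamma_k(A)=\mathbb Z/p\mathbb Z$ if and only if $\Gamma_{a-k}(A)=\mathbb Z/p\mathbb Z$. The map $k\mapsto a-k$ sends the range $3\le k\le a-3$ onto itself. It therefore suffices to treat $3\le k\le \lfloor a/2\rfloor$; indeed, if $k>a/2$ then $a-k<a/2$ and $a-k\ge 3$.

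For $k$ in this reduced range, Lemma~\ref{thm:DSH} says it is enough to check $f(k):=k a-k^2+1\ge p$. The function $f(k)=k(a-k)+1$ is concave in $k$. Hence on the interval $3\le k\le a/2$ its minimum is attained at the left endpoint $k=3$ (values increase up to $k=a/2$). So the whole matter reduces to the single inequality
\[
3a-9+1\ \ge\ p,\qquad\text{i.e.}\qquad 3a\ge p+8.
\]
Using $a\ge (p+1)/2$, we get $3a\ge (3p+3)/2$. This is at least $p+8$ exactly when $p\ge 13$, which is precisely the hypothesis.

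One should also check that the range is nonempty and consistent, which is automatic. If $a<6$, the range $3\le k\le a-3$ is empty and there is nothing to prove. Since $p\ge 13$ forces $a\ge 7$, the case $k=3$ is in fact always present.

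The only delicate point is confirming that the minimum of $f$ over the reduced range really occurs at $k=3$, rather than at the other end. This is handled by the concavity and symmetry of $k(a-k)$ about $a/2$. After that, the numerical threshold $p\ge 13$ is an elementary check.
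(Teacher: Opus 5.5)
Your proposal is correct and follows the paper's proof essentially step for step. Both arguments reduce via Lemma~\ref{lem:comp} to $3\le k\le\lfloor a/2\rfloor$, then use that $f(k)=ka-k^2+1$ is nondecreasing up to its vertex at $a/2$ to reduce to $k=3$, and finally check $3a-8\ge (3p-13)/2\ge p$ for $p\ge 13$ before invoking Lemma~\ref{thm:DSH}.
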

\begin{proof}
Write $a=|A|$. Since $a>p/2$, we have $a\ge (p+1)/2$.
By Theorem~\ref{thm:DSH} with $k=3$,
\[
|\Gamma_3(A)|\ \ge\ \min\{p,\ 3a-8\}.
\]
Moreover,
\[
3a-8\ \ge\ \frac{3(p+1)}{2}-8\ =\ \frac{3p-13}{2}\ \ge\ p
\qquad (p\ge 13),
\]
so $\Gamma_3(A)=\mathbb Z/p\mathbb Z$.
By Lemma~\ref{lem:comp}, we only need to show the result holds for
$3\le k\le \lfloor a/2\rfloor$.
Now fix $k$ with $3\le k\le \lfloor a/2\rfloor$
and set $f(k)=ka-k^2+1$.
Note that the function $f(x) = ax - x^2 + 1$
is an inverted parabola with vertex at $x=a/2$.
Thus, $f(k)$ is nondecreasing on
the integer interval $\{3,\dots,\lfloor a/2\rfloor\}$.
Explicitly, for $k \le \lfloor a/2\rfloor - 1$, we have
\[
f(k+1)-f(k)=a-(2k+1).
\]
Since $2k \le 2(\lfloor a/2\rfloor - 1) \le a-1$,
the difference is nonnegative.
Hence $f(k)\ge f(3)\ge p$,
and Theorem~\ref{thm:DSH} gives
$\Gamma_k(A)=\mathbb Z/p\mathbb Z$
for all $3\le k\le \lfloor a/2\rfloor$.
\end{proof}

We use the following fixed-length subsum
bound for multisets; it is a standard consequence of the
Devos--Goddyn--Mohar theorem \cite{De,Gry13}.

\begin{lemma}[DGM theorem]
\label{lem:DGM-th}
Let $A=(A_1,\dots,A_u)$ be a sequence of finite subsets of a
finite abelian group $G$.
For $1\le \ell\le u$, define
$$\Pi_{\ell}(A) = \left\{ \gamma_{i_1} + \cdots + \gamma_{i_{\ell}} :
1 \leq i_1 < \cdots < i_{\ell} \leq u, ~\gamma_{i_j} \in A_{i_j} \right\}.$$

Let $H = \mathrm{Stab}(\Pi_{\ell}(A)) = \{\beta \in G :
\Pi_{\ell}(A) + \beta = \Pi_{\ell}(A)\}$.
If $\Pi_{\ell}(A) \neq \emptyset$, then
\[ |\Pi_{\ell}(A)| \geq |H| \left( 1 - \ell + \sum_{Q \in G/H}
\min \{ \ell, |\{i \in [1, u] : A_i \cap Q \neq \emptyset \}| \} \right). \]
\end{lemma}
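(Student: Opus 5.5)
The plan is not to reprove this inequality from scratch. Instead I will identify it with the main theorem of DeVos, Goddyn and Mohar \cite{De}, which in its original form is already stated for a sequence of finite subsets $A_1,\dots,A_u$ of an abelian group. It is also recorded in Grynkiewicz's monograph \cite{Gry13} in the setting of set-partitions and setpartition sums. The work therefore splits into three steps: matching the notation, checking the hypotheses, and, if one prefers the multiset formulation of \cite{Gry13}, a short reduction from sets to sequences.

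\textbf{Step 1 (notation).} In \cite{De}, for a sequence $\mathcal A=(A_1,\dots,A_u)$ of finite subsets of $G$ and $1\le \ell\le u$, the set $\Sigma_\ell(\mathcal A)$ is the set of all sums $\gamma_{i_1}+\dots+\gamma_{i_\ell}$ with $i_1<\dots<i_\ell$ and $\gamma_{i_j}\in A_{i_j}$. This is exactly our $\Pi_\ell(A)$. Their theorem states that, with $H=\mathrm{Stab}(\Sigma_\ell(\mathcal A))$,
\[
|\Sigma_\ell(\mathcal A)|\ \ge\ |H|\Bigl(1-\ell+\sum_{Q\in G/H}\min\{\ell,\ |\{i: A_i\cap Q\neq\emptyset\}|\}\Bigr).
\]
Here the term for each coset $Q$ counts the indices $i$ whose set meets $Q$. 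Equivalently, it is the multiplicity of $Q$ in the sequence of images of the $A_i$ in $G/H$, each $A_i$ viewed through its set of $H$-cosets. I will check carefully that the summand uses "$A_i\cap Q\ne\emptyset$", and not the number of elements of $A_i$ lying in $Q$. This is the only place where a misreading could change the statement.

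\textbf{Step 2 (hypotheses).} The nonemptiness assumption $\Pi_\ell(A)\neq\emptyset$ is what makes the stabilizer meaningful. If some $A_i$ are empty, one discards them: this changes neither $\Pi_\ell$ nor the right-hand side, since empty sets meet no coset. After discarding, nonemptiness of $\Pi_\ell$ means that at least $\ell$ of the sets are nonempty, which is precisely the standing hypothesis in \cite{De}. Finiteness of $G$ guarantees that $H$ is a finite subgroup, so $G/H$ is finite and the sum is finite.

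\textbf{Step 3 (main obstacle: multiset version).} The only real obstacle arises if one uses the version in \cite{Gry13} phrased for sequences of elements or setpartitions with $|A_i|$ bounded. In that case I would translate as follows. The DGM inequality for general set sequences is proved there (Kneser-type induction via the Dyson $e$-transform on pairs $A_i,A_j$, reducing $\sum|A_i|$ while preserving $\Pi_\ell$ inclusion-monotonically), and the lemma follows verbatim. Should a direct argument be demanded, I would follow that induction. The hard case is when the transform stabilizes and one must invoke Kneser's theorem for $A_i+A_j$ and control how the stabilizer interacts with the counting function $\min\{\ell,\cdot\}$ in each coset. Since the present paper uses the lemma only as a black box, I would present the proof as a citation with the notational reconciliation above.
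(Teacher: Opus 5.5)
Your proposal matches the paper: the paper gives no independent proof of this lemma and simply records it as the DeVos--Goddyn--Mohar theorem (citing their paper and Grynkiewicz's monograph), so identifying $\Pi_\ell(A)$ with their $\Sigma_\ell(\mathcal A)$ and citing is exactly the intended argument. Your handling of empty $A_i$ is correct, and the contingency sketch in Step 3 is unnecessary, since the original theorem is already stated for sequences of finite subsets.
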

Let $U$ be a multiset of $G$. For any $\alpha \in G$,
let $v_\alpha(U)$ denote the multiplicity of $\alpha$ in $U$.
For $\ell \ge 0$, let $\Sigma_\ell(U) \subseteq G$
denote the set of all restricted subsums of length $\ell$:
$$\Sigma_\ell(U) = \left\{ \sum_{\alpha \in G} c_\alpha \alpha :
\ 0\leq c_\alpha  \in \mathbb{Z},\ \sum_{\alpha \in G} c_\alpha = \ell,
\ \text{and} \ c_\alpha  \le v_\alpha(U)
\text{ for all } \alpha \in G \right\}.$$
We apply Lemma \ref{lem:DGM-th} with $G=\Z_p$ and with each $A_i$
being a singleton ${\alpha_i}$ coming from the sequence
$U=(\alpha_1,\dots,\alpha_u)$, so that $\Pi_\ell(A)=\Sigma_\ell(U)$.

\begin{corollary}
\label{cor:DGMZp}
Let $p$ be prime and let $U=(\alpha_1,\dots,\alpha_u)$
be a sequence in $\Z_p$.
For each $1\le \ell\le u$,
if $\Sigma_{\ell}(U) \neq \mathbb{Z}_p$,
then
\begin{equation}
\label{eq:DGMprime}
|\Sigma_{\ell}(U)| \geq 1 - \ell +
\sum_{\gamma \in \mathbb{Z}_p}
\min \{ \ell, v_\gamma(U) \}.
\end{equation}
\end{corollary}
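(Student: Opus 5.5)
This is a direct specialization of Lemma~\ref{lem:DGM-th}. Take $G=\Z_p$ and, for each $i$, the singleton $A_i=\{\alpha_i\}$, so the sequence of sets has length $u$. First I check that $\Pi_\ell(A)=\Sigma_\ell(U)$.

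An element of $\Pi_\ell(A)$ is $\alpha_{i_1}+\cdots+\alpha_{i_\ell}$ with $i_1<\cdots<i_\ell$. Grouping equal values, it equals $\sum_\gamma c_\gamma\gamma$, where $c_\gamma$ is the number of chosen indices $i_j$ with $\alpha_{i_j}=\gamma$. Then $\sum_\gamma c_\gamma=\ell$ and $c_\gamma\le v_\gamma(U)$. Conversely, given admissible $c_\gamma$, choose $c_\gamma$ distinct indices among the $v_\gamma(U)$ positions carrying $\gamma$. This gives equality. Since $1\le\ell\le u$, the set $\Pi_\ell(A)$ is nonempty.

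Next I identify the stabilizer. Let $H=\mathrm{Stab}(\Sigma_\ell(U))$, a subgroup of $\Z_p$, so $H=\{0\}$ or $H=\Z_p$. If $H=\Z_p$, then a nonempty set invariant under all translations is all of $\Z_p$. This contradicts the hypothesis $\Sigma_\ell(U)\ne\Z_p$, so $H=\{0\}$.

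With $H$ trivial, the cosets $Q\in G/H$ are exactly the singletons $\{\gamma\}$ for $\gamma\in\Z_p$. For each such coset,
\[
|\{i\in[1,u]:A_i\cap Q\ne\emptyset\}|=|\{i:\alpha_i=\gamma\}|=v_\gamma(U).
\]
Substituting $|H|=1$ and these counts into Lemma~\ref{lem:DGM-th} gives exactly \eqref{eq:DGMprime}.

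There is no real obstacle here. The only point needing care is the stabilizer step: primality forces $H$ to be trivial or all of $\Z_p$, and the non-covering hypothesis rules out the latter.
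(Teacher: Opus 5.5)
Your proposal is correct and follows the paper's own proof: specialize Lemma~\ref{lem:DGM-th} to $G=\Z_p$ with singletons $A_i=\{\alpha_i\}$, identify $\Pi_\ell(A)=\Sigma_\ell(U)$, and use the trivial stabilizer, so the cosets are singletons and the coset counts become $v_\gamma(U)$. You also make explicit a step the paper leaves implicit, namely that primality together with $\Sigma_\ell(U)\neq\Z_p$ forces $H=\{0\}$.
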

\begin{proof}
Apply Lemma~\ref{lem:DGM-th} with $G=\Z_p$ and $A_i=\{\alpha_i\}$. Then $\Pi_\ell(A)=\Sigma_\ell(U)$.
The cosets $Q\in \Z_p/H$ correspond to residues modulo $H$; the count
$|{i:A_i\cap Q\neq\emptyset}|$ is exactly the total multiplicity in that coset.
When $H={0}$, the cosets are singletons and the sum becomes $\sum_{x\in\Z_p}\min\{\ell,v_x\}$, giving \eqref{eq:DGMprime}.
\end{proof}

\section{Proofs of Theorems A and B}
This section is devoted to the proofs of the
main results stated in the Introduction.
The argument is organized into two parts.
In Subsection \ref{subsec:proof_A},
we establish Theorem A by developing a
density-based covering strategy that adapts to
the prime factorization of the group order.
Following this, Subsection \ref{subsec:proof_B}
addresses Theorem B, evaluating  the  behavior
of the critical number $\mu_k(G)$.

\subsection{Proof of Theorem A}
\label{subsec:proof_A}
The proof of Theorem A requires us to handle groups of even and odd orders separately. The even order case is structurally more rigid due to the presence of index-$2$ subgroups. To prepare for this analysis, we first record a basic fact: the intersection of distinct index-$2$ cosets is small. This geometric constraint will later prevent large sets from being  trapped  in multiple cosets simultaneously.

\begin{lemma}
\label{lem:intersect}
Let $G$ be a finite abelian group of order $g$.
If $C_1,C_2$ are distinct cosets of index-$2$
subgroups of $G$, then $\abs{C_1\cap C_2}\le g/4$.
\end{lemma}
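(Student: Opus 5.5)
Write $C_1=x_1+H_1$ and $C_2=x_2+H_2$, where $H_1,H_2$ are subgroups of index $2$. The plan is to split according to whether $H_1=H_2$. The point is that a nonempty intersection of cosets is itself a coset of $H_1\cap H_2$, so the bound reduces to controlling $|H_1\cap H_2|$.

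\emph{Case $H_1=H_2$.} Cosets of the same subgroup are either equal or disjoint. Since $C_1\neq C_2$ by hypothesis, we get $C_1\cap C_2=\emptyset$, and the bound $0\le g/4$ is trivial.

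\emph{Case $H_1\neq H_2$.} If $C_1\cap C_2$ is empty there is nothing to prove. Otherwise pick $y\in C_1\cap C_2$. Then $C_1=y+H_1$ and $C_2=y+H_2$, so
\[
C_1\cap C_2=y+(H_1\cap H_2),
\]
and it suffices to show $|H_1\cap H_2|\le g/4$.

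For this bound, $H_1+H_2$ is a subgroup containing $H_1$ and properly containing it, since $H_2\not\subseteq H_1$ (both have order $g/2$ and are distinct). Because $H_1$ has index $2$, this forces $H_1+H_2=G$. The product formula $|H_1+H_2|\,|H_1\cap H_2|=|H_1|\,|H_2|$ then gives
\[
|H_1\cap H_2|=\frac{(g/2)^2}{g}=\frac{g}{4}.
\]
Equivalently, $H_1\cap H_2$ is the kernel of the map $G\to G/H_1\times G/H_2$. That map is surjective, so the kernel has index $4$.

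The only step needing care is the case distinction itself: same subgroup gives disjoint cosets, different subgroups give the product formula. Everything else is routine.
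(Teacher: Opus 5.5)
Your proof is correct and follows the paper's structure: the same case split on whether $H_1=H_2$, and the same reduction of a nonempty intersection to a coset $y+(H_1\cap H_2)$. The only difference is how you get $|H_1\cap H_2|=g/4$. You use $H_1+H_2=G$ together with the product formula, whereas the paper exhibits elements of $H_1\setminus H_2$ and $H_2\setminus H_1$ to show $\psi\colon G\to G/H_1\times G/H_2$ is surjective, which is the alternative you mention in passing.
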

\begin{proof}
Write $C_i=\gamma_i+H_i$
where $H_i\le G$ has index $2$.
If $H_1=H_2$, then either
$C_1=C_2$ or $C_1\cap C_2=\varnothing$,
so the claim holds trivially.
Assume now $H_1\neq H_2$. Consider the homomorphism
\[
\psi:G\to G/H_1\times G/H_2\cong \mathbb{Z}/2\mathbb{Z}\times \mathbb{Z}/2\mathbb{Z},
\qquad
\psi(\alpha)=(\alpha+H_1,\ \alpha+H_2).
\]
Because $H_1\neq H_2$, choose $\alpha_2\in H_2\setminus H_1$.
Then $\psi(\alpha_2)=(1,0)$ in the product.
Similarly choose $\alpha_1\in H_1\setminus H_2$,
yielding $\psi(\alpha_1)=(0,1)$. Hence $\mathrm{im}(\psi)$
contains $(1,0)$ and $(0,1)$, and therefore $\mathrm{im}(\psi)=\mathbb{Z}/2\mathbb{Z}\times \mathbb{Z}/2\mathbb{Z}$.
Consequently,  $\ker(\psi)=H_1\cap H_2$ has index $4$,
so $|H_1\cap H_2|=g/4$.
Now $C_1\cap C_2$ is either empty or a coset of $H_1\cap H_2$
(if $\alpha\in C_1\cap C_2$ then $C_1=\alpha+H_1$ and $C_2=\alpha+H_2$,
and thus
$C_1\cap C_2=\alpha+H_1\cap H_2$);
in the latter case
$|C_1\cap C_2|=|H_1\cap H_2|=g/4$.
In all cases, $|C_1\cap C_2|\le g/4$.
\end{proof}

Before distinguishing between the cases of even and odd order,
we first establish a general density result.
The following lemma demonstrates that if a set $A$
is strictly larger than half the group,
the restricted sumsets $\Gamma_k(A)$ automatically cover
$G$ for a wide range of $k$ near the boundaries.

\begin{lemma}
\label{lem:3-4-5}
Let $G$ be a finite abelian group of order
$
g\ge 312\,|G[2]|+923.
$
Let $A\subseteq G$ with $|A|=a>g/2$, and define
\[
d=a-\Bigl\lceil \frac{2g}{5}\Bigr\rceil+2\quad(\text{note that}~
d>100).
\]
Then
\[
\Gamma_k(A)=G \qquad\text{for every integer } k \text{ with } 3\le k\le d
~\text{and}~a-d\le k\le a-3.
\]
\end{lemma}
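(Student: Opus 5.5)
The plan is to treat the range $3\le k\le d$ directly and then transfer it. By Lemma~\ref{lem:comp}, $\Gamma_k(A)=G$ if and only if $\Gamma_{a-k}(A)=G$. As $k$ runs over $3\le k\le d$, the value $a-k$ runs over $a-d\le a-k\le a-3$, so the second range comes for free. For $3\le k\le d$ I will reduce to triple sums. Fix a set $S\subseteq A$ with $|S|=k-3$ and put $A'=A\setminus S$ and $\sigma=\sum_{s\in S}s$. Then
\[
\Gamma_k(A)\ \supseteq\ \sigma+\Gamma_3(A'),
\]
so it suffices to choose $S$ such that $\Gamma_3(A')=G$. To get this I will apply Lemma~\ref{lem:Lev} to $A'$; this is allowed because the order hypothesis is the same, and I will rule out alternatives (i)--(iii).

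Alternatives (i) and (iii) are excluded by counting. We have $|A'|=a-k+3\ge a-d+3=\lceil 2g/5\rceil+1>2g/5$. This exceeds $\frac{5}{13}g$, which kills (i). It also exceeds the size $2g/5$ of a union of two cosets of an index-$5$ subgroup, which kills (iii).

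The real issue is (ii), since $|A'|$ may drop below $g/2$. Here I will choose $S$ so that a fixed small ``witness'' set survives. Let $V=G/2G$, an elementary abelian $2$-group of rank $r$, so $|V|=|G[2]|$ and $r=\log_2|G[2]|$. The index-$2$ subgroups of $G$ are exactly the kernels of the nonzero homomorphisms $G\to\mathbb Z/2\mathbb Z$, and these factor through $V$. Hence a set lies in a coset of an index-$2$ subgroup if and only if its image in $V$ lies in a proper affine hyperplane. Since $|A|>g/2$, $A$ is not contained in any index-$2$ coset, because such a coset has only $g/2$ elements. So the image of $A$ affinely spans $V$. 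I then pick greedily a set $T\subseteq A$ with $|T|\le r+1$ whose image already affinely spans $V$: start with one point, and at each step add a point of $A$ outside the current affine span, which exists until the span is all of $V$.

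Now I choose $S\subseteq A\setminus T$. This is possible once $k-3\le a-|T|$, and that holds easily because $k-3\le d-3<a-2g/5$ while $|T|\le 1+\log_2|G[2]|$, which is tiny compared with $2g/5$ under $g\ge 312|G[2]|+923$. Then $T\subseteq A'$, so $A'$ lies in no index-$2$ coset, and Lemma~\ref{lem:Lev} forces $\Gamma_3(A')=G$. The remark $d>100$ follows from $a>g/2$ and $g\ge 1235$: indeed $d\ge g/2-2g/5+1>g/10$. The step I expect to need the most care is this exclusion of alternative (ii): checking the correspondence between index-$2$ cosets and affine hyperplanes of $G/2G$, and confirming that the witness set $T$ is small enough to be kept inside $A'$ for every $k\le d$.
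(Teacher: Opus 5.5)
Your proof is correct. It differs from the paper's only in how alternative (ii) of Lemma~\ref{lem:Lev} is excluded. The reduction to $\Gamma_3(A\setminus S)$, the use of Lemma~\ref{lem:comp} for the upper range, and the size count that rules out (i) and (iii) are the same as in the paper.

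The paper excludes (ii) by contradiction. It supposes that every deletion of a $t$-subset $T$ ($t=k-3$) leaves $A\setminus T$ inside an index-$2$ coset $C_T$. Lemma~\ref{lem:intersect} says two distinct index-$2$ cosets meet in at most $g/4$ points. Combined with $|A\setminus(T_1\cup T_2)|=a-t-1>g/4$, this shows that $C_T$ does not change when one element of $T$ is swapped. Hence $C_T$ is the same coset $C$ for all $T$, which forces $A\subseteq C$, impossible since $|A|>g/2$.

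You instead identify index-$2$ cosets with preimages of affine hyperplanes of $V=G/2G$, where $|V|=|G[2]|$. You then protect an affinely spanning witness set of size at most $1+\log_2|G[2]|$.

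What each route buys:
\begin{itemize}
\item Your route is constructive and proves more: \emph{every} deletion of $k-3$ elements avoiding the witness set works, not just some deletion. It also uses no density margin for (ii); it only needs $k-3\le a-|T|$.
\item The paper's route avoids the structure of $G/2G$ and stays purely combinatorial. The cost is that it is only existential, and it relies on the auxiliary Lemma~\ref{lem:intersect} and on the margin $a-t-1>g/4$.
\end{itemize}
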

\begin{proof}
By Lemma \ref{lem:comp},
we only need to show that the lemma holds in the range $3\le k\le d$.
Fix an integer $k$ with $3\le k\le d$, and put $t=k-3\ge 0$.
Then
\begin{equation}\label{eq:Bsize_lower_updated}
a-t=a-(k-3)\ge \Bigl\lceil \frac{2g}{5}\Bigr\rceil+1>\frac{2g}{5}>\frac{g}{4}.
\end{equation}
\textbf{Claim.}
There exist distinct $\alpha_1,\dots,\alpha_t\in A$ such that
$B=A\setminus\{\alpha_1,\dots,\alpha_t\}$ is not contained
in any coset of an index-$2$
subgroup of $G$.

\smallskip\noindent
\emph{Proof of the claim.}
If $t=0$, then $B=A$ and $A$ cannot lie in any
index-$2$ coset because
the size $|A|>g/2$. So assume $t\ge 1$.
Suppose for contradiction that for every
$t$-subset $T\subseteq A$ the set
$A\setminus T$ is contained in a coset of an
index-$2$ subgroup of $G$.
By \eqref{eq:Bsize_lower_updated},
for every such $T$ we have $|A\setminus T|>g/4$.
In particular, the index-$2$ coset
containing $A\setminus T$ is unique:
if $A\setminus T\subseteq C_1\cap C_2$
for index-$2$ cosets $C_1,C_2$, then
\[
|C_1\cap C_2|\ge |A\setminus T|>\frac{g}{4},
\]
so Lemma~\ref{lem:intersect} forces $C_1=C_2$.
Thus we may denote by $C_T$ the unique index-$2$ coset with
\[
A\setminus T\subseteq C_T.
\]
Now let $T_1,T_2\subseteq A$ be $t$-subsets
with $|T_1\cap T_2|=t-1$
(i.e.\ $T_2$ is obtained from $T_1$
by replacing one element).
Such a pair exists because $t\ge 1$
and $t<a$.
Then $|T_1\cup T_2|=t+1$ and
\[
(A\setminus T_1)\cap
(A\setminus T_2)=A\setminus (T_1\cup T_2),
~~\text{so}~
|A\setminus (T_1\cup T_2)|=a-(t+1).
\]
Using \eqref{eq:Bsize_lower_updated} we obtain
\[
a-(t+1)=(a-t)-1\ge
\Bigl\lceil \frac{2g}{5}\Bigr\rceil>\frac{g}{4}.
\]
Since
\[
A\setminus (T_1\cup T_2)\subseteq
(A\setminus T_1)\cap (A\setminus T_2)
\subseteq C_{T_1}\cap C_{T_2},
\]
we have $|C_{T_1}\cap C_{T_2}|>g/4$,
and Lemma~\ref{lem:intersect} implies
$
C_{T_1}=C_{T_2}.
$
Next, we show that $C_T$ is the same for all
$t$-subsets $T\subseteq A$.
Let $T,T'\subseteq A$ be arbitrary $t$-subsets.
Write
\[
T\setminus T'=\{\tau_1,\dots,\tau_r\},
\qquad T'\setminus T=\{\tau_1',\dots,\tau_r'\}
\]
for some $r\ge 0$. Define a
sequence of $t$-subsets by
\[
T_0=T,\qquad T_i=(T_{i-1}\setminus\{\tau_i\})\cup
\{\tau_i'\}\quad (1\le i\le r).
\]
Then $T_r=T'$, and for each $i$ we have
$|T_{i-1}\cap T_i|=t-1$, so the previous
paragraph gives $C_{T_{i-1}}=C_{T_i}$.
Hence $C_T=C_{T'}$.
Thus there exists a fixed index-$2$ coset
$C$ such that $A\setminus T\subseteq C$
for every $t$-subset $T\subseteq A$.

Finally, fix any $\gamma\in A$. Since $t<a$,
we can choose a $t$-subset $T\subseteq A$
with $\gamma\notin T$.
Then $\gamma\in A\setminus T\subseteq C$,
so $A\subseteq C$.
But $|C|=g/2$ while $|A|>g/2$,
a contradiction. This proves the claim.

\medskip\noindent
Choose $\alpha_1,\dots,\alpha_t$ as in the claim,
and set $B=A\setminus\{\alpha_1,\dots,\alpha_t\}$.
Then $|B|=a-t$, so by
\eqref{eq:Bsize_lower_updated} we have $|B|>2g/5$.
In particular $|B|>5g/13$,
so alternative (i) of Lemma~\ref{lem:Lev} fails.
By construction, $B$ is not
contained in any index-$2$ coset, so alternative (ii) fails.
Also, $B$ cannot be contained in a
union of two cosets of an index-$5$ subgroup,
since every such union has size at most $2g/5$;
hence alternative (iii) fails.
Therefore Lemma~\ref{lem:Lev} applied to $B$ yields
$
\Gamma_3(B)=G.
$
Now let $\gamma\in G$ be arbitrary.
Since $\Gamma_3(B)=G$, there exist distinct
$\beta_1,\beta_2,\beta_3\in B$ such that
\[
\beta_1+\beta_2+\beta_3=\gamma-(\alpha_1+\cdots+\alpha_t).
\]
Then $\alpha_1,\dots,\alpha_t,\beta_1,\beta_2,\beta_3$
are $t+3=k$ distinct elements of $A$ whose sum is $\gamma$.
Thus $\gamma\in \Gamma_k(A)$, and since $\gamma$ is
arbitrary we conclude $\Gamma_k(A)=G$.
This completes the proof for all $3\le k\le d$.
\end{proof}

Lemma \ref{lem:3-4-5} establishes the result for $k$
near the boundaries. To bridge the remaining gap,
we distinguish cases based on the smallest prime divisor $p(G)$ of $|G|$.
We begin with the rigid case $p(G)=2$,
where the threshold is controlled by a subgroup of index 2.

\begin{proposition}
\label{p=2}
Let $G$ be a finite abelian group and $H \le G$
a subgroup of index 2 with $|H|=h$.
Let $A \subseteq G$ satisfy $|A| > |G|/2$.
Assume
$h \ge 312|H[2]| + 923$.
Then for every integer $k$ with $6 \le k \le |A| - 6$,
we have $\Gamma_k(A) = G$.
\end{proposition}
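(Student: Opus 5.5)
Write $G=H\cup(\gamma+H)$ with $\gamma\notin H$, and split $A=A_0\cup A_1$ where $A_0=A\cap H$ and $A_1=A\cap(\gamma+H)$. Put $a_0=|A_0|$, $a_1=|A_1|$ and $a=a_0+a_1>h$. By Lemma~\ref{lem:comp} it suffices to treat $6\le k\le \lfloor a/2\rfloor$. By Lemma~\ref{lem:normal}, applied with the projection $G\to G/H\cong\mathbb{Z}/2\mathbb{Z}$, we may translate so that $a_0\ge a_1$ and $a_0>h/2$. The basic mechanism is as follows. A $k$-sum using $j$ elements of $A_1$ and $k-j$ elements of $A_0$ lands in the coset $j\gamma+H$. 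It therefore suffices to find two indices $j$ of opposite parity such that
\[
\Gamma_{k-j}(A_0)+\Gamma_j(A_1)
\]
covers a full coset of $H$. Such a covering follows if one of the two restricted sumsets, viewed inside $H$ or its translate $\gamma+H$, already equals that whole coset.

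\textbf{Step 1: the dense fiber.} The fiber $A_0\subseteq H$ has $|A_0|>h/2$, and $h\ge 312|H[2]|+923$ is exactly the hypothesis of Lemma~\ref{lem:3-4-5} for the group $H$. Hence
\[
\Gamma_m(A_0)=H \quad\text{for } 3\le m\le d_0 \text{ and } a_0-d_0\le m\le a_0-3,
\]
where $d_0=a_0-\lceil 2h/5\rceil+2>100$. This is not yet all $m$ in $[3,a_0-3]$. To fill the middle range I would like to iterate, and the natural tool is the splitting trick from Lemma~\ref{lem:3-4-5}. Remove $t$ elements so that the remainder $B$ still has size above $2h/5$ and is not contained in an index-$2$ coset of $H$, then apply Lemma~\ref{lem:Lev} to $B$. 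This is the same argument, and it gives the same range $m\le d_0$. So in the middle range I would instead combine two fibers. If $\Gamma_{m_1}(B_1)=H$ for some subset $B_1\subseteq A_0$, then for any disjoint $B_2\subseteq A_0$ with $|B_2|\ge m_2$ we get $\Gamma_{m_1+m_2}(A_0)=H$.

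\textbf{Step 2: choosing $j$.} For the two parities I would take $j\in\{0,1\}$, or $j\in\{1,2\}$ when convenient. With $j=0$ the sums $\Gamma_k(A_0)$ cover $H$ provided $3\le k\le d_0$ or $k\ge a_0-d_0$. With $j=1$ the sums $\Gamma_{k-1}(A_0)+\alpha$, for any single $\alpha\in A_1$, cover $\gamma+H$ under the same kind of condition on $k-1$. In general I use $j$ elements of $A_1$ and $k-j$ elements of $A_0$, where $j$ is chosen with the correct parity and satisfies $0\le j\le a_1$. The task is to pick $j$ so that $k-j$ lies in a range where $\Gamma_{k-j}(A_0)=H$. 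The middle range of $k$, roughly $d_0<k\le a/2$, is where the index $j$ must absorb the excess. Since $a_1=a-a_0$ and $a\ge h+1$, we have $a_0\le h$ and hence $a_1\ge a-h$. But $a_1$ may be very small, so this alone is not enough. The saving observation is that if $a_1$ is small then $a_0$ is close to $h$. In that case $d_0\approx a_0-2h/5$ is large, about $3h/5$, which exceeds $a/2$. Then $j\in\{0,1\}$ already suffices.

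\textbf{Step 3: the main obstacle.} The hard case is when both fibers are moderate, for instance $a_0\approx a_1\approx h/2$. Then $d_0$ is small, about $h/10$. In this situation I would put the roles of the fibers on an equal footing. Both $A_0$ and $A_1-\gamma$ have size above $2h/5$ only if $a_1>2h/5$. When that holds, Lemma~\ref{lem:3-4-5} applies to $A_1-\gamma$ in $H$ as well, with range $d_1$. I then write $k=m_0+m_1$ with $m_0\le d_0$ and $m_1$ ranging over $[3,d_1]$ or over its complement range, and use that $\Gamma_{m_0}(A_0)=H$ already forces the full coset regardless of which elements of $A_1$ are chosen. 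By varying $m_1$ by one we get both parities. The bookkeeping needed to show that every $k\in[6,a/2]$ decomposes this way is what forces the lower bound $k\ge 6$: each part needs at least $3$ summands when it is the covering part, or we need $3+3$ terms. When $a_1\le 2h/5$, I use $a_0\ge a-2h/5>3h/5$, which makes $d_0>h/5$. In that case I alternate between $j=0$ and $j=1$ while also taking $k-j$ in the upper window $[a_0-d_0,a_0-3]$. The step I expect to be most delicate is verifying that these windows overlap for every $k$, given the integrality constants.
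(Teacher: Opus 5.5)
Your Step~2 framework is the right one, and it is what the paper does: use $j$ elements of $A_1$ and $k-j$ of $A_0$, with $j$ of the required parity, $0\le j\le a_1$, and $\Gamma_{k-j}(A_0)=H$. But you never show that such a $j$ exists for every $k\in[6,\lfloor a/2\rfloor]$, and the case analysis you give instead does not work as written. In Step~3, Lemma~\ref{lem:3-4-5} cannot be applied to $A_1-\gamma$ merely because $a_1>2h/5$. Its hypothesis is $|A_1|>h/2$; the constant $2h/5$ only enters through the range $d$. No covering property of $A_1$ is needed anyway: once $\Gamma_{k-j}(A_0)=H$, any $j$ elements of $A_1$ will do. More seriously, in the case $a_1\le 2h/5$ your plan of alternating $j\in\{0,1\}$ between the windows $[3,d_0]$ and $[a_0-d_0,a_0-3]$ breaks down. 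Since $a_0-d_0=\lceil 2h/5\rceil-2$, the two windows leave a gap whenever $a_0<2\lceil 2h/5\rceil-5$, which is roughly $4h/5$. For example, $h=10^4$, $a_0=7000$, $a_1=3500$ give $d_0=3002$ and leave $[3003,3997]$ uncovered. Yet $k=3500\le a/2=5250$ is admissible, and taking $j\in\{0,1,2\}$ does not help. So the overlap you flag as ``the most delicate step'' is false in general.

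The missing idea is to take $j$ as \emph{large} as possible rather than as small as possible, so that $A_1$ absorbs almost all the summands.
If $a_1\ge k-3$, take $j\in\{k-4,k-3\}$. Then $k-j\in\{3,4\}$, and Lemma~\ref{lem:3-4-5} gives $\Gamma_{k-j}(A_0)=H$.
If $a_1<k-3$, take $j\in\{a_1-1,a_1\}$. Then $3<k-j\le k-a_1+1\le d_0$, because $k\le a/2$ and $a>h$ imply $\lceil 2h/5\rceil\le a/2+1$.
This is exactly the paper's two-case proof. In the second case the paper does not quote $d_0$; it uses your own padding observation instead. It removes an arbitrary $T\subseteq A_0$ with $|T|=k-j-3$ and applies Lemma~\ref{lem:3-4-5} to $A_0\setminus T$, which still has at least $a-k+2>h/2$ elements. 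Note also that the configuration you call hardest, $a_0\approx a_1\approx h/2$, is actually the easiest one. There $k-j$ never needs to exceed $\max\{4,(a_0-a_1)/2+1\}$.
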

\begin{proof}
Decompose $A$ into $A_0 = A \cap H$ and $A_1 = A \setminus H$, denoting their sizes by $a_0$ and $a_1$.
By Lemma \ref{lem:normal}, we may assume that the kernel contains the majority of elements, i.e., $a_0 > h/2$.
In light of Lemma \ref{lem:comp}, it suffices to verify $\Gamma_k(A) = G$ for the range $6 \le k \le \lfloor a/2 \rfloor$.
Fix an arbitrary target $\alpha \in G$ and let $s = \pi(\alpha) \in \mathbb{Z}/2\mathbb{Z}$.
We distinguish two cases based on the cardinality
of the external fiber $A_1$.

\paragraph{Case 1:  $a_1 \le k-3$.}
In this case,
we choose a subset $L \subseteq A_1$ of size $\ell$
such that $\ell \equiv s \pmod 2$ and
$\ell \in \{a_1, a_1-1\}$. Since $|A| > h$
implies $A \not\subseteq H$, we have $a_1 \ge 1$,
so such a choice is always possible.
Let $\overline{L}= \sum_{\iota \in L} \iota$
($\overline{L}=0$ if $\ell=0$).
Note that $\pi(\overline{L})\equiv s \pmod 2$,
so $\alpha-\overline{L} \in H$.
We define
$t = k - 3 - \ell$.
Clearly,  $t \ge 0$ and $a_0 - t \ge a_0 + a_1 - k + 2>0$.
We choose any set $T \subseteq A_0$ of size $t$.
Let $B = A_0 \setminus T$.
We must ensure $B$ is dense enough to apply Lemma \ref{lem:3-4-5}.
Since $|B| = a_0 - t \geq a - k + 2$
and $k \le a/2$,
it follows that $a - k+2 \ge a/2 +2> h/2$. Thus $|B| > h/2$.
By Lemma \ref{lem:3-4-5} applied to $B \subseteq H$,
we have $\Gamma_3(B) = H$.
Since $\alpha -  \overline{L} - \sum_{\tau \in T} \tau \in H$,
there exist three distinct elements
$\beta_1, \beta_2, \beta_3 \in B$ summing to this value.
The union $L \cup T \cup \{\beta_1, \beta_2, \beta_3\}$
comprises $\ell + t + 3 = k$ distinct elements summing to $\alpha$.

\paragraph{Case 2: $a_1 > k-3$.}
In this case, $A_1$ is large relative to $k$.
We need to select $\ell$ elements from $A_1$
and $m$ elements from $A_0$ such that $\ell + m = k$
and $\ell \equiv s \pmod 2$. We choose
$m \in \{3, 4\}$ depending on the parity of $k-3$:
\begin{itemize}
    \item If $k-3 \equiv s \pmod 2$, set $m=3$ and $\ell=k-3$.
    \item If $k-3 \not\equiv s \pmod 2$,
    set $m=4$ and $\ell=k-4$ (note $k-4 \equiv s \pmod 2$).
\end{itemize}
Since $k \ge 6$, both choices of $\ell$ are positive.
Furthermore, since $a_1 > k-3$,
we have $a_1 > \ell$ in both cases,
so distinct elements can be chosen.
Select any subset $L \subseteq A_1$ of size $\ell$.
Let $\overline{L}= \sum_{\iota \in L} \iota$.
Then $\alpha- \overline{L} \in H$.
 By Lemma \ref{lem:3-4-5},
 $\Gamma_m(A_0) = H$ for $m \in \{3, 4\}$.
Thus, there exist distinct elements
$\gamma_1, \dots, \gamma_m \in A_0$ such that
$\sum \gamma_i = \alpha - \overline{L}$.
The set $L \cup \{\gamma_1, \dots, \gamma_m\}$ contains $\ell+m=k$
distinct elements summing to $\alpha$.
\end{proof}

Having disposed of the case where $G$ has even order,
we now turn to the case where  $3$ is a divisor of
$|G|$. The strategy remains to analyze the
projection of $A$ onto a quotient of prime order.
However, unlike the index-2 case where parity constraints
played a central role, the arithmetic in
$\mathbb{Z}/3\mathbb{Z}$ requires a different
covering property.
The following  lemma asserts that the union of
sumsets of three consecutive lengths is sufficient
to cover the cyclic group of order 3.

Before proceeding, we recall the notation for
restricted sumsets of a multiset.
Let $U$ be a multiset of elements from an abelian group $G$.
For any $\alpha \in G$, let $v_\alpha(U)$ denote
the multiplicity of $\alpha$ in $U$.
For an integer $\ell \ge 0$,
let $\Sigma_\ell(U)$ denote the set of all sums of $\ell$
elements of $U$:
\[
\Sigma_\ell(U) = \left\{ \sum_{\alpha \in G} c_\alpha \alpha :
\ c_\alpha  \in \mathbb{Z}, \ 0\leq c_\alpha \le v_\alpha(U)
\text{ for all } \alpha \in G,
\ \text{and} \ \sum_{\alpha \in G} c_\alpha = \ell \right\}.
\]
\begin{lemma}
\label{lem:Z3}
Let $U$ be a multiset of elements from
$\mathbb{Z}/3\mathbb{Z}$ consisting of $a_1$ copies
of $1$ and $a_2$ copies of $2$. Let $u= a_1 + a_2\geq 2$.
Then for any integer $\ell$ such that $0 \le \ell \le u-2$,
\[
\Sigma_\ell(U) \cup \Sigma_{\ell+1}(U) \cup \Sigma_{\ell+2}(U) = \mathbb{Z}/3\mathbb{Z}.
\]
\end{lemma}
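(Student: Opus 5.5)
We prove the statement directly by parametrizing the choices. An element of $\Sigma_j(U)$ is obtained by choosing $c_1$ copies of $1$ and $c_2=j-c_1$ copies of $2$, subject to $0\le c_1\le a_1$ and $0\le c_2\le a_2$. Its value is
\[
c_1+2c_2\equiv 2j-c_1 \equiv -(j+c_1) \pmod 3.
\]
Write $s=j+c_1$. Then $s$ ranges over all integers with
\[
j+\max\{0,j-a_2\}\le s\le j+\min\{a_1,j\}.
\]
In words, as $c_1$ runs through its admissible interval, $s$ runs through a contiguous integer interval. The plan is to show that, as $j$ ranges over $\{\ell,\ell+1,\ell+2\}$, the values of $s$ cover three consecutive integers, or at least hit all three residues mod $3$.

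\textbf{Case $\ell\le a_1$ and $\ell\le a_2$.} First I would handle the range where both fibers are large relative to $\ell$. Take $c_1=0$ in each of $\Sigma_\ell,\Sigma_{\ell+1},\Sigma_{\ell+2}$. This requires $j\le a_2$ for each $j$ used, which may fail for $j=\ell+1$ or $\ell+2$. To be careful, organize the argument by the interval of admissible $c_1$ for each $j$:
\[
I_j=[\max(0,j-a_2),\,\min(a_1,j)].
\]
This interval is nonempty exactly when $j\le u$, which holds since $j\le \ell+2\le u$.

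\textbf{General argument.} The cleaner route is to show that the union of the sets $\{j+c_1: c_1\in I_j\}$ over the three values of $j$ contains three consecutive integers. Let $m_j=\max(0,j-a_2)$, the minimal admissible $c_1$. Going from $j$ to $j+1$, the quantity $m_j$ increases by $0$ or $1$, so the minimal value $j+m_j$ increases by $1$ or $2$.
\begin{itemize}
\item If the minimal values increase by $1$ each time, they are three consecutive integers and we are done.
\item Otherwise, some step $j\to j+1$ increases $m_j$. Then $j+1>a_2$, so $c_2$ is pinned at $a_2$ from that point on, i.e.\ $c_1=j-a_2$ is forced at the minimum.
\end{itemize}

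\textbf{Cleanest formulation.} Rather than tracking minima, I would argue via the complement symmetry (Lemma~\ref{lem:comp} in multiset form, $\Sigma_j(U)=\overline U-\Sigma_{u-j}(U)$) to assume $\ell+2\le$ roughly $u/2$, but I expect a direct argument to be simpler. If some $I_j$ has length at least $2$, then that single $j$ already gives three values of $s$, hence all residues; length $\ge 2$ happens iff $\min(a_1,j)-\max(0,j-a_2)\ge 2$. Otherwise every $I_j$ has length at most $1$, which forces at least one of the following:
\begin{itemize}
\item some $a_i\le 1$;
\item $j\le 1$;
\item $j\ge u-1$.
\end{itemize}

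These degenerate situations are then checked by hand. The key constraint is that $j$ spans three consecutive values inside $[0,u]$. For example, if $a_2=0$, then $U$ consists of $u\ge 2$ copies of $1$ and $\Sigma_j=\{j\}$, so three consecutive $j$ give all residues.

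\textbf{Main obstacle.} The expected main obstacle is the bookkeeping of these boundary cases, namely small $a_1$ or $a_2$, or $\ell$ near $0$ or $u-2$. In each, I would exhibit explicitly one sum from each $\Sigma_j$ with distinct residues, using the formula $-(j+c_1)$. The generic fact to verify is that
\[
s_{\min}(j)=j+\max(0,j-a_2)
\]
takes distinct residues across the three $j$'s whenever the intervals are singletons. When a jump of $2$ occurs, the interval for that $j$ becomes nondegenerate unless $a_1$ is exhausted, and that residual situation is again covered by the single-type case.
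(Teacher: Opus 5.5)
\textbf{Verdict: genuine gap, easily repaired.} Your ``cleanest formulation'' has the same skeleton as the paper's proof. If some feasible interval $I_j$ contains at least three points, then $\Sigma_j(U)$ alone is $\mathbb{Z}/3\mathbb{Z}$; otherwise one falls back on degenerate configurations. The proposal stops where the real work lies, and two pieces are missing.

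First, when $a_1,a_2\ge 2$ you still have to show that some $j\in\{\ell,\ell+1,\ell+2\}$ is non-degenerate. This is true: a degenerate $j$ must lie in $\{0,1,u-1,u\}$, and since $u\ge 4$ this set contains no three consecutive integers. The paper makes the same point by exhibiting $j=2$, $\ell+1$ or $u-2$. But it has to be said.

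Second, and more seriously, the case $\min(a_1,a_2)=1$ is never handled. The method you announce for the degenerate cases, one sum from each $\Sigma_j$ with pairwise distinct residues, fails there. For $a_1=a_2=1$ and $\ell=0$ one has $\Sigma_0(U)=\Sigma_2(U)=\{0\}$ and $\Sigma_1(U)=\{1,2\}$, so two of the three residues must come from the same length. Your closing remarks do not fill this hole:
\begin{itemize}
\item All three intervals are singletons only when $a_1a_2=0$. So the ``generic fact'' about $s_{\min}$ says nothing about the two-point intervals that occur when $\min(a_1,a_2)=1$.
\item ``$a_1$ exhausted'' after a jump means $j+1=u$, which is not the single-type case.
\end{itemize}

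Either of two repairs works.

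\emph{The paper's route.} If $a_2=1$ (the case $a_1=1$ is symmetric under multiplication by $2$), then $\Sigma_j(U)\supseteq\{j,j+1\}$ for $1\le j\le u-1$. Hence $\Sigma_\ell\cup\Sigma_{\ell+1}$, or $\Sigma_0\cup\Sigma_1$ when $\ell=0$, already covers $\mathbb{Z}/3\mathbb{Z}$.

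\emph{Finishing your tracking of minima.} This needs no case split on $\min(a_1,a_2)$. Let $R_j=[s_{\min}(j),s_{\max}(j)]$ be the integer range of $s=j+c_1$.
\begin{itemize}
\item If $a_2\ge 1$, then $s_{\min}(j+1)\le s_{\max}(j)+1$ for $0\le j\le u-1$. Equivalently, $\max(0,j+1-a_2)\le\min(a_1,j)$, which follows from $j+1\le u$ and $a_2\ge 1$.
\item Both endpoints are increasing in $j$. Hence $R_\ell\cup R_{\ell+1}\cup R_{\ell+2}$ is the single interval $[s_{\min}(\ell),s_{\max}(\ell+2)]$.
\item Its length is at least $s_{\min}(\ell+2)-s_{\min}(\ell)\ge 2$, so it contains three consecutive integers. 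Since the value of a sum is $-s$, every residue is hit.
\end{itemize}
The remaining case $a_2=0$ you already did.
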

\begin{proof}
If $a_1=0$, $U$ consists only of $2$'s.
The sum of $k$  elements in $U$ is $2k \pmod 3$.
The union involves sums $2\ell, 2(\ell+1), 2(\ell+2)$.
Since $2$ is coprime to $3$, these are three
distinct residues modulo $3$, covering the group.
The case $a_2=0$ is identical
(sums are $\ell, \ell+1, \ell+2 \pmod 3$).

Assume $a_1 \ge 1$ and $a_2 \ge 1$.
For any length $k$ ($0 \le k \le u$),
a sum involves $x$ copies of $1$ and $y$ copies of
$2$ with $x+y=k$. The sum is
$x+2y = (k-y)+2y \equiv k+y \pmod 3$.
Thus, $\Sigma_k(U) = \{ k+y \pmod 3 \mid y \in I(k) \}$,
where the feasible interval for $y$ (number of $2$'s chosen) is
\[
I(k)= [\max(0, k-a_1), \min(k, a_2)] \cap \mathbb{Z}.
\]
The size of this interval is $|I(k)| = \min(k, a_2) - \max(0, k-a_1) + 1$.
If $|I(k)| \ge 3$, then $\Sigma_k(U) = \mathbb{Z}/3\mathbb{Z}$.
We distinguish two cases based on the abundance of the minority element:

\paragraph{Case 1: $\min(a_1, a_2) \ge 2$.}
We first show that
if $2 \le k \le u-2$, then $|I(k)| \ge 3$.
Indeed, the length is $\min(k, a_2) - \max(0, k-a_1) + 1$.
    Since $k \le u-2 = a_1+a_2-2$, we have $k-a_1 \le a_2-2$.
  If $k \le a_1$ and $k \le a_2$, the length is $k+1 \ge 3$.
      If $k > a_1$ and $k > a_2$, the length is
      $a_2 - (k-a_1) + 1 = u - k + 1 \ge 3$.
Intermediate cases similarly yield
length $\ge \min(a_1, a_2)+1 \ge 3$.

We claim that for any $\ell$ in the valid
range $0 \le \ell \le u-2$, at least one index
$k \in \{\ell, \ell+1, \ell+2\}$ satisfies $|I(k)| \ge 3$.
Consider the triplet of indices $\{\ell, \ell+1, \ell+2\}$.
\begin{itemize}
    \item If $\ell=0$, the indices are $\{0, 1, 2\}$.
    $k=2$ satisfies $2 \le k \le u-2$ (as $u \ge 4$).
    Thus $\Sigma_2(U) = \mathbb{Z}/3\mathbb{Z}$.
    \item If $\ell > 0$, then $k=\ell+1$
    satisfies $2 \le \ell+1$.
    We check the upper bound:
    If $\ell+1 \le u-2$, then $\Sigma_{\ell+1}(U)$
    covers the group.
    If $\ell+1 > u-2$, since $\ell \le u-2$,
    this implies $\ell = u-2$.
    The indices are $\{u-2, u-1, u\}$.
    Here $k=u-2$ satisfies the condition.
    Thus $\Sigma_{u-2}(U)$ covers the group.
\end{itemize}
In all subcases, the union covers $\mathbb{Z}/3\mathbb{Z}$.

\paragraph{Case 2: $\min(a_1, a_2) = 1$.}
Without loss of generality, let $a_2=1$
and $a_1 \ge 1$. Then $u = a_1+1$.
For any $k$ such that $1 \le k \le a_1 = u-1$:
We can choose zero `2's
    (since $k \le a_1$). Sum: $k \cdot 1 = k$.
We can choose one `2' (since $k \ge 1$).
    Sum: $(k-1) \cdot 1 + 2 = k+1$.
Thus, for $1 \le k \le u-1$,
$\Sigma_k(U) \supseteq \{k, k+1 \pmod 3\}$.
Now consider the union over indices $\ell, \ell+1, \ell+2$:
\begin{itemize}
    \item If $\ell=0$: Indices are $\{0, 1, 2\}$.
    We have $\Sigma_0 = \{0\}$ and
    $\Sigma_1 \supseteq \{1, 2\}$, so the
    union is $\{0, 1, 2\}$.
    \item If $\ell \ge 1$: Indices are $\{\ell, \ell+1, \ell+2\}$.
    We have $\Sigma_\ell \supseteq \{\ell, \ell+1\}$ and
    $\Sigma_{\ell+1} \supseteq \{\ell+1, \ell+2\}$.
    The union contains $\{\ell, \ell+1, \ell+2\}$, which are three consecutive integers covering $\mathbb{Z}/3\mathbb{Z}$.
\end{itemize}
\end{proof}

Armed with this combinatorial tool for $\mathbb{Z}/3\mathbb{Z}$,
we proceed to the structural proof for the case where  $3$
is a   divisor of $|G|$.
\begin{proposition}
\label{prop:p3}
Let $G$ be a finite abelian group and $H \le G$ a
subgroup of index $3$
with $|H|=h$. Assume  $h \ge 312|H[2]| + 923$.
Let $A \subseteq G$ satisfy $|A| > |G|/2= 3h/2$.
Then for every integer $k$ with $6 \le k \le |A| - 6$, we have $\Gamma_k(A) = G$.
\end{proposition}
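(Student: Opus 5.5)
The argument will mirror Proposition~\ref{p=2}, with Lemma~\ref{lem:Z3} taking over the role of the parity argument. Let $\pi:G\to\mathbb{Z}/3\mathbb{Z}$ have kernel $H$ and write $A_r=A\cap\pi^{-1}(r)$ with $a_r=|A_r|$. Using Lemma~\ref{lem:normal}, we translate so that $A_0$ is the densest fibre and $a_0>h/2$. By Lemma~\ref{lem:comp} it is enough to treat $6\le k\le\lfloor a/2\rfloor$. The basic tool is that $A_0\subseteq H$ has $|A_0|>h/2$, so Lemma~\ref{lem:3-4-5} (applied inside $H$, whose order meets the hypothesis) gives $\Gamma_m(B)=H$ for every $3\le m\le d_B$ and every $B\subseteq A_0$ with $|B|>h/2$. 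Here $d_B=|B|-\lceil 2h/5\rceil+2$ is large, and in particular $m\in\{3,4,5\}$ is allowed.

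Fix a target $\alpha$ and put $s=\pi(\alpha)$. Let $U$ be the multiset of residues $\pi(x)$ for $x\in A_1\cup A_2$, so $u=a_1+a_2$.

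\textbf{Case 2: $u\ge k-3$.} Choose $\ell\in\{k-5,k-4,k-3\}$ so that $s\in\Sigma_\ell(U)$. Lemma~\ref{lem:Z3} supplies such an $\ell$ because $0\le k-5$ and $k-5\le u-2$. Pick $L\subseteq A_1\cup A_2$ of size $\ell$ realizing that residue, so $\alpha-\overline{L}\in H$. Then write $\alpha-\overline{L}$ as a sum of $m=k-\ell\in\{3,4,5\}$ distinct elements of $A_0$, using $\Gamma_m(A_0)=H$.

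\textbf{Case 1: $u<k-3$.} Apply Lemma~\ref{lem:Z3} with $\ell=u-2$ to get $\ell'\in\{u-2,u-1,u\}$ and $L\subseteq A_1\cup A_2$ of size $\ell'$ with $\pi(\overline L)=s$. If $u<2$ we argue directly: $u\ge1$ because $a>h$ means $A\not\subseteq H$, and then $\Sigma_0\cup\Sigma_1$ contains $0$ and the one available nonzero residue. That does not cover all of $\mathbb{Z}/3\mathbb{Z}$, so this point needs care; one can instead note that $a_0\le h$ forces $u\ge a-h>h/2$, which makes Case 1 with tiny $u$ impossible since $k\ge 6$ and $k\le a/2$.

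Next set $t=k-3-\ell'\ge0$ and choose any $T\subseteq A_0$ of size $t$, with $B=A_0\setminus T$. Then $|B|=a_0-t\ge a-k+1\ge a/2+1>3h/4>h/2$, so Lemma~\ref{lem:3-4-5} in $H$ gives $\Gamma_3(B)=H$, and we complete $\alpha-\overline L-\overline T$ by three distinct elements of $B$.

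\textbf{Main obstacle.} The delicate part is the size bookkeeping. In Case 2 we must check that $m\le d_{A_0}$, which is immediate since $m\le5$, and that $\ell\le u$. In Case 1 we must verify that removing $t$ elements keeps $B$ above $h/2$. The observation $u\ge a-h>h/2$ in fact shows that Case 1 can arise only when $k-3>h/2$, and that regime is controlled by $k\le a/2$ together with $a_0\le h$: we get $|B|\ge a-k+1\ge a/2+1>h/2$. The rest is routine once these inequalities are confirmed.
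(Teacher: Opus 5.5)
Your proposal is correct and follows essentially the same route as the paper. You translate so that $a_0>h/2$, reduce to $6\le k\le\lfloor a/2\rfloor$, and split on $u$. When $u\ge k-3$, you apply Lemma~\ref{lem:Z3} at $\ell=k-5$ and fill with $m\in\{3,4,5\}$ elements of $A_0$ via Lemma~\ref{lem:3-4-5}. When $u<k-3$, you apply Lemma~\ref{lem:Z3} at $\ell=u-2$, pad with $t$ elements of $A_0$, and finish with $\Gamma_3(B)=H$ using $|B|\ge a-k+1>h/2$. Your observation $u\ge a-h>h/2$ settles the small-$u$ worry exactly as the paper does, so the detour about $u<2$ can simply be deleted.
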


\begin{proof}
Let $\pi: G \to \mathbb{Z}/3\mathbb{Z}$ be the quotient map.
Decompose $A$ into fibers
$A_r = A \cap \pi^{-1}(r)$ for $r \in \{0, 1, 2\}$.
Let $a_r = |A_r|$ and $a = |A|$.
By Lemma \ref{lem:normal}, we can assume that
$a_0 > h/2$.
Define the external set $A_{out} = A_1 \cup A_2$ with
size $u=a_1+a_2$.
Let $U$ be the multiset of residues of $A_{out}$.
By Lemma \ref{lem:comp}, it suffices to prove $\Gamma_k(A)=G$ for
$
6 \le k \le \left\lfloor \frac{a}{2} \right\rfloor.
$
Fix a target $\alpha \in G$ with residue $s = \pi(\alpha)$.
We distinguish two cases based on the
availability of external elements:

\paragraph{Case 1: $u\ge k-3$.}
In this case, $1\leq k-5\leq u-2$
($u>2$ because $u=a-a_0\geq 3h/2-h=h/2$).
By Lemma \ref{lem:Z3}, there exists a length
$\ell \in  \{k-5, k-4, k-3\}$
such that we can choose a subset
$L \subseteq A_{out}$ with $|L|=\ell$ and $\pi(\sum L) = s$.
Let $\overline{L} = \sum_{\iota \in L} \iota$.
Then $\alpha - \overline{L} \in H$.
Define  $m = k - \ell$.
Note that $m \in \{3, 4, 5\}$.
We need to find $m$ distinct elements in
$A_0$ summing to $\alpha - \overline{L}$.
It follows from $a_0 > h/2$ and Lemma \ref{lem:3-4-5}
that
there exists $M \subseteq A_0$ with $|M|=m$
summing to the required kernel element.
The set $L \cup M$ is a valid $k$-subset of $A$.

\paragraph{Case 2: $u < k-3$.}
By Lemma \ref{lem:Z3}, there exists
$\ell \in \{u-2, u-1, u\}$ and a subset $L \subseteq A_{out}$
of size $\ell$ such that $\pi(\sum L) = s$.
Let $\overline{L} = \sum_{\iota \in L} \iota$.
We need to choose $k - \ell$ elements from $A_0$.
Let $t = k - 3 - \ell$.
Since $\ell \le u < k-3$, we have $t > 0$.
Choose any arbitrary subset $T \subseteq A_0$ of size $t$
(clearly, $t<a_0$, see the calculations below).
Let $\overline{T} = \sum_{\tau \in T} \tau$ and
$B = A_0 \setminus T$. We  verify that
$B$ is dense enough to generate $H$:
\[
|B| = a_0 - t = a_0 - (k - 3 - \ell) = a_0 + \ell - k + 3.
\]
We use the lower bound $\ell \ge u - 2$:
\[
|B| \ge a_0 + (u - 2) - k + 3 = (a_0 + u) - k + 1 = a - k + 1.
\]
Recall the range condition $k \le \lfloor a/2 \rfloor$, so
$
|B| \ge a - \frac{a}{2} + 1 = \frac{a}{2} + 1.
$
Using the hypothesis $|A| = a > 3h/2$:
\[
|B| > \frac{3h/2}{2} + 1 = \frac{3h}{4} + 1.
\]
We have $|B| > h/2$.
Therefore, by Lemma \ref{lem:3-4-5}, $\Gamma_3(B) = H$.
There exist distinct $\beta_1, \beta_2, \beta_3 \in B$
such that $\beta_1+\beta_2+\beta_3 = \alpha - \overline{L} - \overline{T}$.
The union $L \cup T \cup \{\beta_1, \beta_2, \beta_3\}$ provides the required solution.
\end{proof}


We have now established the results for groups
where the smallest prime divisor is $2$ or $3$.
The final step in our inductive strategy is to handle
the case where the smallest prime divisor is $p=5$.
As before, the proof relies on the additive structure
of the quotient group. The following lemma asserts that
any four non-zero elements in $\mathbb{Z}/5\mathbb{Z}$
suffice to cover the entire group via subset sums.
While this is a standard fact--and a direct consequence of the
Cauchy--Davenport theorem
(see, e.g.,
\cite[Theorem 6.2]{Gry13})--we include a proof for completeness.

\begin{lemma}
\label{lem:four-nonzero}
Let $r_1, r_2, r_3, r_4 \in
\mathbb{Z}/5\mathbb{Z} \setminus \{0\}$
be non-zero elements (not necessarily distinct).
Let $R$ be the multiset consisting of $r_1,r_2,r_3$ and $r_4$.
Then the set of all subsums of $R$ (of lengths $0,1,2,3,4$) is the whole group:
\[
\Sigma_0(R)\cup \Sigma_1(R)\cup \Sigma_2(R)\cup \Sigma_3(R)\cup \Sigma_4(R)
=\mathbb{Z}/5\mathbb{Z}.
\]
\end{lemma}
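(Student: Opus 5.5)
The plan is the standard incremental subset-sum argument, which is in effect Cauchy--Davenport applied to two-element sets. Put $S_0=\{0\}$ and, for $j=1,\dots,4$, set
\[
S_j=S_{j-1}\cup (S_{j-1}+r_j).
\]
Then $S_j$ is exactly the set of subsums of $r_1,\dots,r_j$, of every length from $0$ to $j$. In particular $S_4=\Sigma_0(R)\cup\cdots\cup\Sigma_4(R)$, so the lemma is the statement $S_4=\mathbb{Z}/5\mathbb{Z}$. It therefore suffices to prove that $|S_j|\ge j+1$ for every $j$, since then $|S_4|\ge 5$.

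The key step is the following growth claim. If $S\subsetneq \mathbb{Z}/5\mathbb{Z}$ is nonempty and $r\neq 0$, then $S\cup(S+r)\supsetneq S$. To prove it, suppose instead that $S+r\subseteq S$. Since translation is injective and $S$ is finite, this forces $S+r=S$, and so $S$ is invariant under the subgroup $\langle r\rangle$. Because $5$ is prime and $r\neq0$, we have $\langle r\rangle=\mathbb{Z}/5\mathbb{Z}$, so $S$ is the whole group, a contradiction. (This is precisely the Cauchy--Davenport bound $|S+\{0,r\}|\ge\min\{5,|S|+1\}$.)

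Now induct on $j$. We have $|S_0|=1$. At each step $j$, either $S_{j-1}$ is already the whole group, and then so is $S_j$, or the growth claim gives $|S_j|\ge |S_{j-1}|+1$. Hence $|S_j|\ge \min\{5,j+1\}$, and $j=4$ gives $S_4=\mathbb{Z}/5\mathbb{Z}$.

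There is no real obstacle. The only points needing care are that the $r_j$ may repeat, which is harmless because each step adjoins one new element of the multiset, and the identification of the iterated union $S_j$ with the union of the restricted sums $\Sigma_\ell$. That identification holds because each $r_j$ is either included or not, so a subset of indices of size $\ell$ corresponds exactly to a term of $\Sigma_\ell(R)$.
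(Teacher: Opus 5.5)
Your proposal is correct and is essentially the paper's own proof: the same iterated sets $S_j=S_{j-1}\cup(S_{j-1}+r_j)$, and the same growth claim that a proper nonempty subset of $\mathbb{Z}/5\mathbb{Z}$ cannot be invariant under translation by a nonzero element, since such an element generates the group. Your remark that $S+r\subseteq S$ already forces $S+r=S$ by cardinality spells out the step from $S_{j-1}+r_j\neq S_{j-1}$ to strict growth, which the paper leaves implicit.
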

\begin{proof}
For $j=1,2,3,4$ define multisets
\[
S_0=\{0\},\qquad S_j=S_{j-1}+\{0,r_j\}=S_{j-1}\cup
(S_{j-1}+r_j)\subseteq \mathbb{Z}/5\mathbb{Z}.
\]
Then $S_4$ is exactly the multiset of all subset
sums of $r_1,r_2,r_3$ and $r_4$, and
\[
\Sigma_0(R)\cup \Sigma_1(R)\cup \Sigma_2(R)\cup \Sigma_3(R)\cup \Sigma_4(R)
=S_4.
\]
We claim that if
$\emptyset\subsetneq S\subsetneq \mathbb{Z}/5\mathbb{Z}$
and $r\neq 0$, then
$S+r\neq S$. Indeed, if $S+r=S$,
then by iterating we get
$S+mr=S$ for all integers $m$,
but $r$ generates $\mathbb{Z}/5\mathbb{Z}$,
so $S$ is invariant under all translations and
hence must equal the whole group,
contradicting $S\subsetneq \mathbb{Z}/5\mathbb{Z}$.
Therefore, whenever $S_{j-1}\neq \mathbb{Z}/5\mathbb{Z}$
we have
$S_{j-1}+r_j\neq S_{j-1}$ and thus
$|S_j|=|S_{j-1}\cup (S_{j-1}+r_j)|\ge |S_{j-1}|+1$
(here $|X|$ denotes the number of distinct elements in the multiset $X$).
Starting from $|S_0|=1$, we obtain $|S_4|\ge 5$.
Since $\mathbb{Z}/5\mathbb{Z}$ has exactly $5$ elements,
this forces
$S_4=\mathbb{Z}/5\mathbb{Z}$, proving the claim.
\end{proof}

We now apply Lemma \ref{lem:four-nonzero} to
derive a covering property for sumsets of consecutive
lengths in multisets of arbitrary size $u \ge 4$.
\begin{lemma}
\label{lem:Z5}
Let $U$ be a multiset of elements of
$\mathbb{Z}/5\mathbb{Z}\setminus\{0\}$
with total size $u\ge 4$. Then for every integer
$\ell$ with $0\le \ell\le u-4$,
\[
\Sigma_\ell(U)\cup \Sigma_{\ell+1}(U)\cup
\Sigma_{\ell+2}(U)\cup
\Sigma_{\ell+3}(U)\cup \Sigma_{\ell+4}(U)
=\mathbb{Z}/5\mathbb{Z}.
\]
\end{lemma}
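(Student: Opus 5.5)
Fix $\ell$ with $0\le \ell\le u-4$ and a target $s\in\mathbb{Z}/5\mathbb{Z}$. The plan is to reduce to Lemma~\ref{lem:four-nonzero} by writing any sum of length between $\ell$ and $\ell+4$ as a fixed ``base'' of length $\ell$ plus a subsum of four reserved elements. Concretely, choose any sub-multiset $R\subseteq U$ consisting of exactly $4$ elements; this is possible since $u\ge 4$. The complementary multiset $U\setminus R$ has size $u-4\ge \ell$, so we may choose a sub-multiset $W\subseteq U\setminus R$ of size exactly $\ell$ (with $W=\varnothing$ when $\ell=0$). Let $w$ be the sum of the elements of $W$.

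Next apply Lemma~\ref{lem:four-nonzero} to $R$, whose four elements are non-zero because $U$ avoids $0$. The lemma gives some $j\in\{0,1,2,3,4\}$ and a sub-multiset $R'\subseteq R$ of size $j$ whose sum equals $s-w$. Then $W\cup R'$ is a sub-multiset of $U$: $W$ and $R'$ are drawn from the disjoint parts $U\setminus R$ and $R$, so the multiplicity constraints $c_\alpha\le v_\alpha(U)$ are respected. Its length is $\ell+j$ and its sum is $w+(s-w)=s$. Hence $s\in\Sigma_{\ell+j}(U)$, and since $s$ was arbitrary the union of the five sumsets is all of $\mathbb{Z}/5\mathbb{Z}$.

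There is no real obstacle here. The only point needing care is the multiset bookkeeping: the base $W$ must be taken from the complement of $R$, which is exactly where the hypothesis $\ell\le u-4$ is used. Without it, the selected elements might overlap with $R$ and exceed some multiplicity $v_\alpha(U)$.
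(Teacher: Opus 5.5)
Your proof is correct and is essentially the paper's argument. The paper takes a length-$\ell$ base $L$ first and then four elements from the remaining $u-\ell\ge 4$, while you reserve the four elements $R$ first and take $W$ from the complement; either way Lemma~\ref{lem:four-nonzero} finishes it.
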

\begin{proof}
Fix $\ell$ with $0\le \ell\le u-4$.
Choose any submultiset $L\subseteq U$
of size $\ell$, and write
$\overline{L}=\sum_{\iota\in L}\iota\in \mathbb{Z}/5\mathbb{Z}$.
Remove $L$ from $U$, leaving a multiset $U'$
of size $u-\ell\ge 4$.
Pick any four elements $r_1,r_2,r_3,r_4$ from $U'$.
By Lemma \ref{lem:four-nonzero},
the subsums of $\{r_1,r_2,r_3,r_4\}$
of lengths $0,1,2,3,4$
cover $\mathbb{Z}/5\mathbb{Z}$. Hence for any
$s\in \mathbb{Z}/5\mathbb{Z}$ there exists an
integer $t\in\{0,1,2,3,4\}$
and a submultiset
$T\subseteq\{r_1,r_2,r_3,r_4\}$ of size $t$
such that $\overline{T}=\sum_{\tau\in T}\tau=s-\overline{L}$.
Then $L\cup T$ is a submultiset of $U$ of size $\ell+t$ and sum
$\overline{L}+\overline{T}=s.
$
Therefore $s\in \Sigma_{\ell+t}(U)$ for some $t\in\{0,1,2,3,4\}$, proving
that the stated union equals $\mathbb{Z}/5\mathbb{Z}$.
\end{proof}

Equipped with this covering property for
$\mathbb{Z}/5\mathbb{Z}$, we proceed to the main result
for the case where $G$ has a subgroup of index $5$.
\begin{proposition}\label{prop:p5}
Let $G$ be a finite abelian group and $H\le G$ a subgroup of index $5$
with $|H|=h$. Assume $h\ge 312|H[2]|+923$.
Let $A\subseteq G$ satisfy $|A|>|G|/2=5h/2$.
Then for every integer $k$ with $6\le k\le |A|-6$, we have $\Gamma_k(A)=G$.
\end{proposition}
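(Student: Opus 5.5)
We follow the same template as Propositions~\ref{p=2} and~\ref{prop:p3}. Let $\pi:G\to\mathbb{Z}/5\mathbb{Z}$ be the quotient map with kernel $H$. By Lemma~\ref{lem:normal} we translate so that $A_0=A\cap H$ is a largest fiber and $a_0>h/2$. Put $A_{out}=A\setminus A_0$, with $u=|A_{out}|$, and let $U$ be the multiset of residues of $A_{out}$, all of which are nonzero. Since $a_0\le h$ and $a>5h/2$, we get $u>3h/2$, so in particular $u\ge 4$. By Lemma~\ref{lem:comp} it suffices to treat $6\le k\le\lfloor a/2\rfloor$. We then fix a target $\alpha\in G$ with $s=\pi(\alpha)$ and split according to the size of $u$.

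\textbf{Case 1: $u\ge k-3$.} We would like $\ell\in\{k-7,\dots,k-3\}$, so that $m=k-\ell\in\{3,\dots,7\}$. Applying Lemma~\ref{lem:Z5} with base length $k-7$ requires $k\ge 7$. For $k=6$ we instead use lengths $\{0,\dots,4\}$ directly from Lemma~\ref{lem:four-nonzero}, giving $m\in\{2,\dots,6\}$; but $m=2$ is not available from Lemma~\ref{lem:3-4-5}. To avoid this, we apply Lemma~\ref{lem:Z5} with base $\ell_0=\max(k-7,0)$ and lengths $\ell_0,\dots,\ell_0+4$. This gives $m=k-\ell\in\{3,\dots,7\}$ when $k\ge7$, and for $k=6$ it gives $\ell\le 4$, which is fine when $\ell\le 3$. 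The leftover case $k=6,\ \ell=4$ (so $m=2$) is repaired by choosing the four elements $r_1,\dots,r_4$ in the proof of Lemma~\ref{lem:Z5} more carefully. Alternatively, we can write $k=6$ as one outside element plus five inside elements when $s\ne0$: pick any outside element whose residue is $s$, or use two outside elements and $m=4$. Having fixed $L\subseteq A_{out}$ with $|L|=\ell$ and $\pi(\overline L)=s$, note that $m\le 7\le d$ because $d>100$. Lemma~\ref{lem:3-4-5}, applied to $A_0\subseteq H$ with $a_0>h/2$ (its hypothesis $h\ge312|H[2]|+923$ is assumed), gives $\Gamma_m(A_0)=H$. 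This produces $M\subseteq A_0$ with $\sum M=\alpha-\overline L$, and $L\cup M$ is the required $k$-subset. We must also check that $u\ge k-3$ makes the chosen $\ell\le u$; the base satisfies $\ell_0\le u-4$ since $k-7\le u-4$.

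\textbf{Case 2: $u<k-3$.} Here we take $\ell_0=u-4\ge 0$. Lemma~\ref{lem:Z5} gives $\ell\in\{u-4,\dots,u\}$ and $L\subseteq A_{out}$ with $\pi(\overline L)=s$. We set $t=k-3-\ell>0$, choose any $T\subseteq A_0$ of size $t$, and let $B=A_0\setminus T$. Then
\[
|B|\ge a_0+u-4-k+3=a-k-1\ge a/2-1>5h/4-1>h/2,
\]
so Lemma~\ref{lem:3-4-5} gives $\Gamma_3(B)=H$. Three distinct elements of $B$ summing to $\alpha-\overline L-\overline T$ complete the $k$-set. Actually Case~2 is vacuous: $u>3h/2$ while $k\le a/2\le 5h/2\cdot\ldots$ need not exceed $u$, so we keep the argument anyway.

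The main obstacle is Case~1, where the gaps in Lemma~\ref{lem:Z5} are wider than for $p=2,3$. The issue is ensuring that the inside length $m$ always lands in the range covered by Lemma~\ref{lem:3-4-5}, namely $m\ge 3$, for small $k$, especially $k=6$. The first remedy to try is to exploit that $u>3h/2$ is huge. By pigeonhole, $U$ contains some nonzero residue $r$ with multiplicity at least $4$. Using only copies of $r$, the sums of $j=0,\dots,4$ copies are $jr$, which already cover $\mathbb{Z}/5\mathbb{Z}$. Choosing the required $j\in\{0,\dots,4\}$ with $jr\equiv s$ then leaves $m=k-j\ge2$. The case $m=2$, i.e.\ $k=6,\ j=4$, is handled instead by $j'=j+5=9$ copies of $r$. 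This is not allowed since $j'>k$, so we would instead shift by a different residue. We expect this small-$k$ bookkeeping to be the only delicate point.
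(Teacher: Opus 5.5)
Your treatment of $k\ge 7$ is exactly the paper's argument: Lemma~\ref{lem:Z5} at base length $k-7$, then $\Gamma_m(A_0)=H$ for $m\in\{3,\dots,7\}$ from Lemma~\ref{lem:3-4-5}. Your Case~2 is indeed vacuous. Since $a>5h/2>2h$ and $a_0\le h$, one has $u\ge a-h>a/2\ge k$. Your sentence about this is garbled, but it does no harm.

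The genuine gap is $k=6$. You correctly see that using outside lengths $0,\dots,4$ can force $\ell=4$, hence $m=2$, which Lemma~\ref{lem:3-4-5} does not cover. None of your three remedies is actually carried out:
\begin{itemize}
\item ``choose $r_1,\dots,r_4$ more carefully'' is never specified;
\item ``pick an outside element with residue $s$, or use two outside elements'' is asserted without showing such elements exist;
\item the one-residue pigeonhole attempt is abandoned by you at ``shift by a different residue.''
\end{itemize}
As written, the proposal does not establish $\Gamma_6(A)=G$.

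The missing step is short, and it is exactly how the paper handles $k=6$.
\begin{itemize}
\item $A_{\mathrm{out}}$ cannot lie in a single class $\pi^{-1}(r)$, since then $|A|\le a_0+a_r\le 2h<5h/2$.
\item By pigeonhole, some nonzero class $r$ has $a_r\ge\lceil u/4\rceil>3$, and some other nonzero class $r'\ne r$ has $a_{r'}\ge 1$.
\item Take distinct $x_1,x_2,x_3\in A_r$ and $z\in A_{r'}$. The residues of subsets of $\{x_1,x_2,x_3,z\}$ of size at most $3$ include $0,r,2r,3r,r',r'+r,r'+2r$.
\item Scaling by $r^{-1}$ turns this into $\{0,1,2,3,\tau,\tau+1,\tau+2\}$ with $\tau=r^{-1}r'\in\{2,3,4\}$, which is all of $\mathbb{Z}/5\mathbb{Z}$ in each case.
\end{itemize}
In your own language, the only troublesome target is $s=4r$. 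It is reached as $z$ plus $0$, $1$, or $2$ copies of $r$, according as $r'=4r$, $3r$, or $2r$. Hence one can always take $\ell\le 3$, so $m=6-\ell\in\{3,4,5,6\}$. Lemma~\ref{lem:3-4-5} inside $H$ then supplies $M\subseteq A_0$, and $L\cup M$ is a $6$-set summing to $\alpha$, as in the rest of your argument.
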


\begin{proof}
Let $a=|A|$.
By Lemma \ref{lem:comp},
it suffices to prove $\Gamma_k(A)=G$
for all integers $k$ with
$
6\le k\le \left\lfloor \frac{a}{2}\right\rfloor.
$
Fix such a $k$ and fix an arbitrary target element $\alpha\in G$.
Let $\pi:G\twoheadrightarrow G/H\cong \mathbb{Z}/5\mathbb{Z}$
be the quotient map.
For each $r\in \mathbb{Z}/5\mathbb{Z}$ define the fibers
\[
A_r=A\cap \pi^{-1}(r),\qquad a_r=|A_r|.
\]
According to Lemma \ref{lem:normal},
we can assume that
$a_0>h/2$.
Set
\[
A_{\mathrm{out}}=A_1\cup A_2\cup A_3\cup A_4,
\qquad u=|A_{\mathrm{out}}|=a-a_0.
\]
Note that $a_0\le h$, hence
\[
u=a-a_0 \ge a-h > \frac{5h}{2}-h=\frac{3h}{2}.
\]
As $a>5h/2$,  we have $a/2>5h/4$ and thus
 $a-h > a/2 \ge k$ and thus
$
u>k.
$
We will use Lemma \ref{lem:3-4-5} inside the subgroup $H$:
since $h\ge 312|H[2]|+923$ and $|A_0|=a_0>h/2$,
Lemma \ref{lem:3-4-5} implies
\begin{equation}\label{eq:internal}
\Gamma_m(A_0)=H \qquad\text{for every } m\in\{3,4,5,6,7\}.
\end{equation}

Let $s=\pi(\alpha)\in \mathbb{Z}/5\mathbb{Z}$.

\medskip\noindent
\textbf{Case 1: $k=6$.}
We first show that $A_{\mathrm{out}}$ meets at
least two nonzero residue classes.
Indeed, if $A_{\mathrm{out}}\subseteq \pi^{-1}(r)$
for a single $r\neq 0$, then
$A\subseteq \pi^{-1}(0)\cup \pi^{-1}(r)$, so $|A|\le 2h$,
contradicting $|A|>5h/2$.
By the pigeonhole principle, among the four
fibers $A_1,A_2,A_3,A_4$ there exists
$r\in\{1,2,3,4\}$ with
\[
a_r\ge \left\lceil \frac{u}{4}\right\rceil > 3.
\]
Choose distinct elements $x_1,x_2,x_3\in A_r$.
Choose $r'\in\{1,2,3,4\}\setminus\{r\}$ with $a_{r'}\ge 1$
and pick $z\in A_{r'}$.
Consider the set of residues obtainable as sums
of at most three of these four elements.
In $\mathbb{Z}/5\mathbb{Z}$ this set contains
\[
0,\quad r,\quad 2r,\quad 3r,\quad r',\quad r'+r,\quad r'+2r.
\]
Since $r\neq 0$, multiplication by $r^{-1}$
is a permutation of $\mathbb{Z}/5\mathbb{Z}$.
Let $\tau=r^{-1}r'\in\{2,3,4\}$.
Then the displayed set becomes
\[
\{0,1,2,3,\tau,\tau+1,\tau+2\}.
\]
If $\tau=2$ this is $\{0,1,2,3,4\}$;
if $\tau=3$ then $\tau+1=4$ and we still get
$\{0,1,2,3,4\}$; and if $\tau=4$ it is again $\{0,1,2,3,4\}$.
Thus there exists a subset $L\subseteq \{x_1,x_2,x_3,z\}\subseteq A_{\mathrm{out}}$
of some size $\ell\in\{0,1,2,3\}$ such that
$
\pi\bigl(\sum_{\iota\in L} \iota\bigr)=s.
$
Let $\overline{L}=\sum_{\iota\in L} \iota\in G$.
Then $\alpha-\overline{L}\in H$.
Put $m=6-\ell\in\{3,4,5,6\}$. By \eqref{eq:internal},
$\Gamma_m(A_0)=H$, hence
there exist distinct $\gamma_1,\dots,\gamma_m\in A_0$ such that
\[
\gamma_1+\cdots+\gamma_m=\alpha-\overline{L}.
\]
All elements of $L$ lie outside $A_0$,
while all $\gamma_i$ lie in $A_0$, so the $6$ elements
$
L\ \cup\ \{\gamma_1,\dots,\gamma_m\}
$
are pairwise distinct and sum to $\alpha$.
Therefore $\alpha\in \Gamma_6(A)$.

\medskip\noindent
\textbf{Case 2: $k\ge 7$.}
Let $U$ be the multiset of residues of $A_{\mathrm{out}}$ in $\mathbb{Z}/5\mathbb{Z}$;
equivalently, $U$ contains $a_r$ copies of $r$ for each $r\in\{1,2,3,4\}$.
Then the multiset $U$ has size $u$.
Set $\ell_0=k-7\ge 0$. Since $u>k$, we have
$\ell_0=k-7< u-4$, so Lemma \ref{lem:Z5}
applies to $U$ at $\ell=\ell_0$.
Hence there exist an integer
$\ell\in\{\ell_0,\ell_0+1,\ell_0+2,\ell_0+3,\ell_0+4\}
=\{k-7,k-6,k-5,k-4,k-3\}$
and a subset $L\subseteq A_{\mathrm{out}}$ of size $|L|=\ell$
such that
$
\pi\bigl(\sum_{\iota\in L} \iota\bigr)=s.
$
Let $\overline{L}=\sum_{\iota\in L} \iota\in G$.
Then $\alpha-\overline{L}\in H$.
Put $m=k-\ell$, so $m\in\{3,4,5,6,7\}$.
By \eqref{eq:internal}, $\Gamma_m(A_0)=H$,
so there exist distinct $\gamma_1,\dots,\gamma_m\in A_0$ with
\[
\gamma_1+\cdots+\gamma_m=\alpha-\overline{L}.
\]
Again $L\subseteq A_{out}$ and
$\{\gamma_1,\dots,\gamma_m\}\subseteq A_0$, so all $k$ elements
in $L\cup\{\gamma_1,\dots,\gamma_m\}$ are
pairwise distinct and sum to $\alpha$.
Thus $\alpha\in \Gamma_k(A)$.
Since $\alpha\in G$ is arbitrary,
we have shown $\Gamma_k(A)=G$ for all
$k$ with $6\le k\le \lfloor a/2\rfloor$.
\end{proof}

Having settled the specific cases $p(G) \in \{2,3,5\}$,
we now turn to the general inductive step for an
arbitrary prime index $p(G) \ge 7$.
In the general argument,
we will need to estimate the size of sumsets
in the quotient group using the fiber sizes.
The following   lemma provides a sharp lower bound for this purpose,
asserting that the sum of truncated multiplicities
is minimized precisely when the mass is concentrated in
as few fibers as possible.

\begin{lemma}
\label{lem:minimize}
Fix integers $p\ge 2$, $h\ge 1$
and $1\le \ell\le h$.
Let $(v_1,\dots,v_{p-1})$ be integers
with $0\le v_i\le h$ and $\sum_{i=1}^{p-1} v_i=u$.
Write $u=qh+r$ with $q=\lfloor u/h\rfloor$ and $0\le r<h$.
Then
\[
\sum_{i=1}^{p-1}\min(\ell,v_i)\ \ge\ q\ell+\min(\ell,r).
\]
\end{lemma}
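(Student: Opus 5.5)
The function $\phi(v)=\min(\ell,v)$ is concave and nondecreasing on $\{0,1,\dots,h\}$, and it is constant for $v\ge \ell$. A concave function summed over a box with fixed total is minimized at an extreme point, that is, when the mass is packed into full fibers. I would make this concrete by an exchange (smoothing) argument, followed by direct evaluation at the packed configuration.

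\emph{Exchange step.} Take any vector $(v_1,\dots,v_{p-1})$ with $0\le v_i\le h$ and $\sum v_i=u$. Suppose there are two indices $i\ne j$ with $0<v_i\le v_j<h$. Replace $(v_i,v_j)$ by $(v_i-1,v_j+1)$. This keeps the constraints and the total $u$ fixed. I claim $\sum_i\min(\ell,v_i)$ does not increase. Lowering $v_i$ by one decreases $\min(\ell,v_i)$ by $1$ if $v_i\le \ell$, and by $0$ otherwise. Raising $v_j$ by one increases $\min(\ell,v_j)$ by $1$ if $v_j<\ell$, and by $0$ otherwise. Since $v_i\le v_j$, the case $v_j<\ell$ forces $v_i\le\ell$. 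Hence the gain never exceeds the loss, which is exactly the concavity of $\phi$. Each move strictly increases $\sum v_i^2$, a bounded quantity, so the process terminates. At termination at most one coordinate lies strictly between $0$ and $h$, so the coordinates consist of some copies of $h$, at most one value $r'\in(0,h)$, and zeros. Since the total is $u=qh+r$ with $0\le r<h$, there are exactly $q$ copies of $h$ and one remaining value equal to $r$ (or none if $r=0$). Implicitly this requires $u\le (p-1)h$, which holds automatically because $v_i\le h$.

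\emph{Evaluation.} At the packed vector the sum equals $q\min(\ell,h)+\min(\ell,r)=q\ell+\min(\ell,r)$, using the hypothesis $\ell\le h$. This is the only place that hypothesis enters. Combining the two steps, every admissible vector has sum at least $q\ell+\min(\ell,r)$.

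\emph{Main obstacle.} The only delicate point is checking that the exchange is genuinely non-increasing in the boundary cases $v_i=\ell$ or $v_j=\ell-1$. The case analysis above handles these, so I expect no real difficulty. As an alternative I could give a direct counting proof. Split the indices into $S=\{i:v_i\ge\ell\}$ and its complement. Then the sum equals $|S|\ell+\sum_{i\notin S}v_i$. Since $u\le |S|h+\sum_{i\notin S}v_i$, comparing the cases $|S|\ge q+1$ and $|S|\le q$ yields the bound. I would present the exchange argument, as it is cleaner.
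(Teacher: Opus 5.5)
Your proposal is correct and follows essentially the same route as the paper. Both use a smoothing/exchange argument driven by the discrete concavity of $t\mapsto\min(\ell,t)$, moving one unit from the smaller to the larger of two coordinates in $(0,h)$ until the vector has $q$ entries equal to $h$, one equal to $r$, and the rest $0$, then evaluating there using $\ell\le h$; your explicit termination measure $\sum v_i^2$ and your case check of the gain versus the loss fill in details the paper leaves implicit.
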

\begin{proof}
Let $f(t)=\min(\ell,t)$ on $\{0,1,\dots,h\}$.
Its discrete slope drops from $1$ to $0$ at $t=\ell$,
hence $f$ is concave in the discrete sense:
\[
f(t+1)-f(t)\ \le\ f(t)-f(t-1)\qquad (1\le t\le h-1).
\]
Take any feasible vector $(v_i)$. If there exist
indices $i\neq j$ with $0<v_i<h$ and $0<v_j<h$, then
(after relabeling) we may assume $v_i\le v_j$ and define
\[
v_i'=v_i-1,\quad v_j'=v_j+1,\quad v_k'=v_k\ (k\neq i,j).
\]
This preserves the constraints and, by discrete concavity,
\[
f(v_i-1)+f(v_j+1)\ \le\ f(v_i)+f(v_j),
\]
so $\sum_k f(v_k')\le \sum_k f(v_k)$.
Iterating this operation pushes mass to the largest
coordinates without increasing the sum. The process
terminates at an extreme vector with $q$ coordinates
equal to $h$, one coordinate equal to $r$, and the
rest $0$. For that vector the sum equals
$q f(h)+f(r)=q\ell+\min(\ell,r)$, proving the bound.
\end{proof}

We now apply this lower bound to establish
a covering property for multiset in
$\mathbb{Z}_p$ with bounded multiplicities.
\begin{lemma}
\label{lem:SigmaFull}
Let $p\ge 7$ be prime and let $U$ be a
multiset in $\mathbb Z_p$ having size $u$  and
\[
v_\alpha(U)\le h\quad\text{for all }\alpha\in\mathbb Z_p^\times,
\qquad v_0(U)=0.
\]
If $h\geq4$ and
\begin{equation}
\label{eq:mass}
u\ >\ \frac{p-2}{2}\,h,
\end{equation}
then for every integer $\ell$ satisfying
$
3\ \le\ \ell\ \le\ u-p+1
$
we have
$
\Sigma_\ell(U)=\mathbb Z_p.
$
\end{lemma}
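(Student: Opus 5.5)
The plan is a proof by contradiction using Corollary~\ref{cor:DGMZp} together with the concavity bound of Lemma~\ref{lem:minimize}. Fix $\ell$ with $3\le \ell\le u-p+1$ and suppose $\Sigma_\ell(U)\neq\mathbb Z_p$. Corollary~\ref{cor:DGMZp} then gives
\[
p-1\ \ge\ |\Sigma_\ell(U)|\ \ge\ 1-\ell+\sum_{\gamma\in\mathbb Z_p^\times}\min\{\ell,v_\gamma(U)\},
\]
since $v_0(U)=0$. It therefore suffices to show that $\sum_{\gamma}\min\{\ell,v_\gamma\}\ge p+\ell-1$, which contradicts the display.

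\emph{Case 1: $\ell\le h$.} Lemma~\ref{lem:minimize} applies, with the $p-1$ nonzero residues as coordinates. Write $u=qh+r$. By \eqref{eq:mass}, $q\ge \lfloor (p-2)/2\rfloor$, and moreover when $p$ is odd $u>(p-2)h/2$ forces either $q\ge (p-1)/2$ or $q=(p-3)/2$ with $r>h/2$. Hence
\[
\sum\min\{\ell,v_\gamma\}\ \ge\ q\ell+\min(\ell,r).
\]
For $q\ge (p-1)/2$ this is at least $\tfrac{p-1}{2}\ell$. We need $\tfrac{p-1}{2}\ell\ge p+\ell-1$, i.e.\ $\ell(p-3)\ge 2(p-1)$, which holds for $\ell\ge 3$ and $p\ge 7$, since $3p-9\ge 2p-2$ iff $p\ge7$. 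In the borderline case $q=(p-3)/2$ with $r>h/2\ge 2$ (using $h\ge4$), the bound is $\tfrac{p-3}{2}\ell+\min(\ell,r)$. If $r\ge \ell$ this equals $\tfrac{p-1}{2}\ell$, as before. If $r<\ell$, I use $\ell\le u-p+1$ and $r>h/2$ to compare: I would check $\tfrac{p-3}{2}\ell+r\ge p+\ell-1$, i.e.\ $\tfrac{p-5}{2}\ell+r\ge p-1$. For $p\ge7$ and $\ell\ge3$ the left side is at least $3+r$, which is insufficient in general, so this is where care is needed. I would refine the estimate using $\ell>r>h/2$, giving $\tfrac{p-5}{2}\ell\ge \tfrac{p-5}{2}(r+1)$, and combine it with $r\ge3$ (from $r>h/2\ge2$). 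For $p=7$ this needs $\ell+r\ge6$, which holds since $\ell\ge r+1\ge4$ and $r\ge3$. For larger $p$, the term $\tfrac{p-5}{2}\ell\ge\tfrac{3(p-5)}{2}$ already exceeds $p-1$ once $p\ge 13$, and $p=11$ gives $3\ell+r\ge 15$, which holds. So the small primes need a direct check.

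\emph{Case 2: $\ell>h$.} Here $\min\{\ell,v_\gamma\}=v_\gamma$ for every $\gamma$, so the sum equals $u$. The hypothesis $\ell\le u-p+1$ gives exactly $u\ge p+\ell-1$, which is the required contradiction. (The same observation also settles Case~1 whenever all $v_\gamma\le \ell$.) Lemma~\ref{lem:minimize} required $\ell\le h$, so this case is handled separately and trivially.

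I expect the main obstacle to be the borderline subcase in Case~1, where $p$ is odd, $q=(p-3)/2$ and $r<\ell$. There the mass bound \eqref{eq:mass} is only barely sufficient, and the hypotheses $h\ge 4$ and $\ell\ge3$ must be used sharply, with $p=7$ being the tightest prime. If the inequality there fails for some tiny configuration, I would fall back on the upper restriction $\ell\le u-p+1$ to rule it out, or treat $\ell=3$ via Lemma~\ref{thm:DSH} applied to the support of $U$.
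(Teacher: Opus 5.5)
Your proposal is correct and follows the paper's proof essentially step for step. Both use the Devos--Goddyn--Mohar bound of Corollary~\ref{cor:DGMZp} and split into $\ell\le h$ and $\ell>h$. For $\ell\le h$, Lemma~\ref{lem:minimize} is used together with the borderline case $q=(p-3)/2$, $r>h/2$; for $\ell>h$, the sum is just $u\ge p+\ell-1$.

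The only difference is in the subcase $r<\ell$. The paper uses $r>h/2\ge \ell/2$ to get $\tfrac{p-2}{2}\ell\ge p+\ell-1$. Your bounds $r\ge 3$ and $\ell\ge 4$ already give $\tfrac{p-5}{2}\ell+r\ge 2(p-5)+3=2p-7\ge p-1$ uniformly for $p\ge 7$. So the prime-by-prime check and the fallback plans in your last paragraph are unnecessary.
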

\begin{proof}
Fix $\ell$ with $3\le \ell\le u-p+1$.
If $\Sigma_\ell(U)\neq \mathbb Z_p$,
then by Corollary~\ref{cor:DGMZp}
it suffices to show
\begin{equation}
\label{eq:Ttarget}
\sum_{\alpha\in\mathbb Z_p}\min(\ell,v_\alpha(U))\ \ge\ p+\ell-1,
\end{equation}
because then $|\Sigma_\ell(U)|\ge p$ and
hence $\Sigma_\ell(U)=\mathbb Z_p$.
Since $v_0(U)=0$, the left-hand side equals
$\sum_{\alpha\in\mathbb Z_p^\times}\min(\ell,v_\alpha(U))$.

\medskip
\noindent\textbf{Case 1: $\ell\le h$.}
Write $u=qh+r$ with $q=\lfloor u/h\rfloor$
and $0\le r<h$.
By Lemma~\ref{lem:minimize},
\[
\sum_{\alpha\in\mathbb Z_p^\times}\min(\ell,v_\alpha(U))
\ \ge\ q\ell+\min(\ell,r).
\]
From \eqref{eq:mass} we have $u/h>(p-2)/2$,
hence $q\ge (p-3)/2$.
If $q\ge (p-1)/2$, then
$
q\ell\ \ge\ \frac{p-1}{2}\,\ell.
$
For $p\ge 7$ and $\ell\ge 3$ one checks
\begin{equation}
\label{eq:ineq1}
\frac{p-1}{2}\,\ell\ \ge\ p+\ell-1,
\end{equation}
since \eqref{eq:ineq1} is equivalent to
$(\ell-2)p\ge 3\ell-2$, which holds for $p\ge 7$, $\ell\ge 3$.
Thus \eqref{eq:Ttarget} follows.
It remains to treat the only other possibility
$q=(p-3)/2$. Then
\[
\frac{u}{h}\ =\ \frac{p-3}{2}+\frac{r}{h}\ >\ \frac{p-2}{2},
\]
so $r/h>1/2$, i.e.\ $r>h/2$.
In particular $r\ge 3$ because $h\ge 4$.
If $\ell\le r$, then $\min(\ell,r)=\ell$ and
\[
q\ell+\min(\ell,r)\ =\Big(\frac{p-3}{2}+1\Big)\ell\
=\ \frac{p-1}{2}\,\ell,
\]
so \eqref{eq:Ttarget} again follows from \eqref{eq:ineq1}.
If instead $\ell>r$, then necessarily
$\ell\ge 4$ (since $r\ge 3$). Using $\min(\ell,r)=r$ and
$r>h/2\ge \ell/2$,
we obtain
\[
q\ell+r\ \ge\ \frac{p-3}{2}\,\ell+\frac{\ell}{2}\
 =\ \frac{p-2}{2}\,\ell.
\]
For $p\ge 7$ and $\ell\ge 4$, one checks
$\frac{p-2}{2}\ell\ge p+\ell-1$
(equivalently $(\ell-2)p\ge 4\ell-2$,
which holds for $p\ge 7$, $\ell\ge 4$).
So \eqref{eq:Ttarget} holds in all subcases of Case~1.

\medskip
\noindent\textbf{Case 2: $\ell>h$.}
Then $\min(\ell,v_\alpha(U))=v_\alpha(U)$
for all $\alpha$, hence
\[
\sum_{\alpha\in\mathbb Z_p}\min(\ell,v_\alpha(U))\ =
\ \sum_{\alpha\in\mathbb Z_p} v_\alpha(U)\ =\ u.
\]
Because $\ell\le u-p+1$, we have $u\ge p+\ell-1$,
i.e.\ \eqref{eq:Ttarget} holds.
Thus \eqref{eq:Ttarget} holds in all cases,
hence $\Sigma_\ell(U)=\mathbb Z_p$.
\end{proof}

With the covering property for $\mathbb{Z}_p$
established by Lemma \ref{lem:SigmaFull},
we now proceed to the main lifting result.
The following proposition combines this
projection-level covering with the structure of the
kernel to show that the $k$-fold sumset
covers the entire group $G$.
\begin{proposition}
\label{prop:lifting}
Let $p\ge 7$ be prime and let
$\pi:G\twoheadrightarrow \mathbb Z_p$ be a surjection
with kernel $H$,
$|H|=h\ge 4$.
Let $A\subseteq G$ have size $a=|A|>|G|/2$,
and let $(A_r)_{r\in\mathbb Z_p}$ be the fibers with
$a_0=\max_r a_r$.
Assume the dense fiber satisfies $\Gamma_3(A_0)=H$.
Then for every integer $k$ with
$
6\ \le\ k\ \le\ a-6,
$
we have $\Gamma_k(A)=G$.
\end{proposition}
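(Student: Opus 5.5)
The plan is to reduce to a single projection step: pick $k-3$ elements outside the dense fiber that hit the correct residue in $\mathbb Z_p$, then correct inside $H$ with three elements of $A_0$, using the hypothesis $\Gamma_3(A_0)=H$.

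By Lemma~\ref{lem:comp}, it suffices to treat $6\le k\le\lfloor a/2\rfloor$. Fix such a $k$ and a target $\alpha\in G$ with $s=\pi(\alpha)$. As in Lemma~\ref{lem:normal}, we take $A_0$ to be the dense fiber sitting in $H$; translating $A$ so that this holds does not affect whether $\Gamma_k(A)=G$. Put $A_{\mathrm{out}}=A\setminus A_0$ and $u=|A_{\mathrm{out}}|=a-a_0$. Let $U$ be the multiset of residues of $A_{\mathrm{out}}$ in $\mathbb Z_p$. Then $v_0(U)=0$ and, since $a_0$ is the maximal fiber size, $v_r(U)=a_r\le a_0\le h$ for every $r\ne 0$.

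We then invoke Lemma~\ref{lem:SigmaFull} with $\ell=k-3$. Its hypotheses hold as follows. The bound $h\ge 4$ is assumed. The mass condition holds because
\[
u\ge a-h>\tfrac{p}{2}h-h=\tfrac{p-2}{2}h.
\]
The lower bound $\ell=k-3\ge 3$ follows from $k\ge 6$.

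The only real check is the upper bound $k-3\le u-p+1$, which is where I expect the main obstacle. Since $u\ge a-h$ and $k\le a/2$, it suffices to have $a/2-3\le a-h-p+1$, i.e.\ $a\ge 2h+2p-8$. This follows from $a>ph/2$ together with the identity
\[
\tfrac{ph}{2}-2h-2p+8=\tfrac{(p-4)(h-4)}{2}\ge 0,
\]
which uses exactly $p\ge 7$ and $h\ge 4$. If this estimate turned out to be too tight, the fallback would be to take $\ell$ among a few consecutive lengths and absorb the difference in $A_0$. The clean inequality above suggests that is unnecessary.

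Lemma~\ref{lem:SigmaFull} then gives $\Sigma_{k-3}(U)=\mathbb Z_p$. So there is a set $L\subseteq A_{\mathrm{out}}$ of $k-3$ distinct elements (distinct group elements, even when their residues repeat in $U$) with $\pi(\overline L)=s$, where $\overline L=\sum_{\iota\in L}\iota$. Hence $\alpha-\overline L\in H$. Because $\Gamma_3(A_0)=H$, there are distinct $\gamma_1,\gamma_2,\gamma_3\in A_0$ with $\gamma_1+\gamma_2+\gamma_3=\alpha-\overline L$. These lie in $H$ while $L$ lies outside $H$, so the $k$ elements of $L\cup\{\gamma_1,\gamma_2,\gamma_3\}$ are pairwise distinct and sum to $\alpha$. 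Since $\alpha$ was arbitrary, $\Gamma_k(A)=G$.
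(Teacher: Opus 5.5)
Your proposal is correct and follows the paper's proof essentially step for step. You reduce to $6\le k\le\lfloor a/2\rfloor$ by Lemma~\ref{lem:comp}, check the mass condition and $k-3\le u-p+1$, apply Lemma~\ref{lem:SigmaFull} with $\ell=k-3$, and finish inside $H$ via $\Gamma_3(A_0)=H$. Your identity $\tfrac{ph}{2}-2h-2p+8=\tfrac{(p-4)(h-4)}{2}$ is the same estimate as the paper's $h(\tfrac p4-1)\ge p-4$, and the translation remark is unnecessary, since the statement already places the maximal fiber over $0$.
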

\begin{proof}
Fix any target $\alpha\in G$.
Let
$
\overline{A}=\sum_{\beta\in A} \beta\in G
$
be the total sum of all elements of $A$.
By Lemma \ref{lem:comp},
it suffices to prove $\alpha\in \Gamma_k(A)$ for all
$6\leq k\le \lfloor a/2\rfloor$.
Fix $6\le k\le \lfloor a/2\rfloor$ and set
$
\ell=k-3.
$
We claim that $\ell\le u-p+1$,
where $u=a-a_0=|A\setminus A_0|$.
Indeed, since $a_0\le h$, we have $u\ge a-h$.
Also $a>|G|/2=ph/2$ implies $a/2>ph/4$. Because $h\ge 4$,
\[
\frac{ph}{4}-h\ =\ h\Big(\frac{p}{4}-1\Big)\ \ge\ p-4,
\]
so $ph/4\ge h+p-4$ and hence
\[
\frac{a}{2}\ >\ \frac{ph}{4}\ \ge\ h+p-4\ \ge\ a_0+p-4.
\]
Therefore
\[
\ell\ =\ k-3\ \le\ \frac{a}{2}-3\ <\ a-a_0-p+1\ =\ u-p+1.
\]
So $3\le \ell\le u-p+1$.
Let $s=\pi(\alpha)\in \mathbb Z_p$.
We now check the mass hypothesis \eqref{eq:mass}
for the multiset $U$.
Since $a_0\le h$ we have $u=a-a_0\ge a-h$, and thus
\[
u\ >\ \frac{ph}{2}-h\ =\ \frac{p-2}{2}\,h.
\]
So Lemma~\ref{lem:SigmaFull} applies and yields
$
\Sigma_\ell(U)=\mathbb Z_p.
$
Hence we can choose $\ell$  distinct  elements
$\gamma_1,\dots,\gamma_\ell\in A\setminus A_0$ with
\[
\pi(\gamma_1)+\cdots+\pi(\gamma_\ell)\ =\ s.
\]
Let $\gamma=\gamma_1+\cdots+\gamma_\ell\in G$.
Then $\pi(\gamma)=s=\pi(\alpha)$, so $\alpha-\gamma\in H$.
By the assumption $\Gamma_3(A_0)=H$, there exist
three distinct elements
$\alpha_1,\alpha_2,\alpha_3\in A_0$ such that
\[
\alpha_1+\alpha_2+\alpha_3\ =\ \alpha-\gamma.
\]
Therefore
\[
\gamma_1+\cdots+\gamma_\ell+\alpha_1+\alpha_2+\alpha_3\ =\ \alpha,
\]
a sum of $k=\ell+3$ distinct elements of $A$.
Thus $\alpha\in\Gamma_k(A)$ for all $6\leq k\le a/2$.
\end{proof}



With the lifting proposition for $p \ge 7$ established,
we can now combine it with the results for small primes
to prove our  main Theorem A for general finite abelian groups.
Although Theorem A was stated in the Introduction,
we restate it here for the convenience of the reader.

\medskip
\noindent\textbf{Theorem A.}\textit{
Let $G$ be a finite abelian group of order $g$,
and let $p(G)$ denote the smallest prime divisor of $g$.
Assume that $g$ satisfies one of the following conditions:
If $p(G)=2$, assume $g \ge 624\,
    \lvert G[2] \rvert + 1846$;
   if $p(G)=3$, assume $g \ge 3705$;
    if $p(G)=5$, assume $g \ge 6175$;
   if $p(G)\ge 7$, assume $g \ge 46319$.
\medskip
If $A \subseteq G$ satisfies $\lvert A \rvert > g/2$, then
\[
    \Gamma_k(A) = G \qquad \text{for every integer }
    k \text{ with } 3 \le k \le \lvert A \rvert - 3.
\]}
\begin{proof}
Write $a=|A|$.
By Lemma~\ref{lem:comp}, it suffices to prove
$\Gamma_k(A)=G$ for all integers
$
3\le k\le \left\lfloor \frac{a}{2}\right\rfloor.
$
The hypothesis on $g$ implies
\[
g\ \ge\ 624|G[2]|+1846\ >\ 312|G[2]|+923,
\]
and the previously established
Lemma~\ref{lem:3-4-5} yields
$
\Gamma_3(A)=\Gamma_4(A)=\Gamma_5(A)=G.
$
Fix an integer $k$ with
$6\le k\le \lfloor a/2\rfloor$.
Since $a>g/2$, we have $k\leq a/2<a-6$.

\medskip
\noindent\textbf{Case 1: $g$ is even.}
Since  $p=2$ divides $g$, there exists a surjection
$\pi:G\twoheadrightarrow \mathbb Z/2\mathbb Z$
with kernel $H$, so $[G:H]=2$ and $|H|=h=g/2$.
Since $H[2]\le G[2]$, we have $|H[2]|\le |G[2]|$.
Moreover, by our assumption,
\[
h=\frac{g}{2}\ \ge\ 312\,|H[2]|\;+\;923.
\]
Thus the hypotheses of Proposition~\ref{p=2} are satisfied, and it follows that
$\Gamma_k(A)=G$ for every $k$ with $6\le k\le a-6$.

\medskip
\noindent\textbf{Case 2: $g$ is odd.}
If $g$ is prime, then $G$ is cyclic of prime
order $g>13$, and
Corollary~\ref{cor:half_dense_prime} gives
$\Gamma_k(A)=G$ for all $3\le k\le a-3$.

Assume now that $g$ is composite.
Let $p$ be the smallest prime divisor of $g$.
Fix a surjection $\pi:G\twoheadrightarrow \mathbb Z/p\mathbb Z$
with kernel $H$, so $|H|=h=g/p$.
For each $r\in\mathbb Z/p\mathbb Z$ define fibers
\[
A_r=A\cap \pi^{-1}(r),\qquad a_r=|A_r|.
\]
 By Lemma \ref{lem:normal}, let $r_0$ maximize $a_r$ and we have $a_0>h/2$.
We split into subcases according to $p$.

\smallskip\noindent
\textbf{Subcase 2a: $p(G)=3$.}
Here $H$ has odd order $h=g/3$, and $|H[2]|=1$.
We have $h\ge 3705/3=1235$, hence $h\ge 312|H[2]|+923$.
Therefore Proposition~\ref{prop:p3}
applies and yields $\Gamma_k(A)=G$ for all $6\le k\le a-6$.

\smallskip\noindent
\textbf{Subcase 2b: $p(G)=5$.}
Similarly, $h=g/5\ge 6175/5=1235$,
so $h\ge 312|H[2]|+923$, and Proposition~\ref{prop:p5}
implies $\Gamma_k(A)=G$ for all $6\le k\le a-6$.

\smallskip\noindent
\textbf{Subcase 2c: $p(G)\ge 7$.}
We will apply Proposition~\ref{prop:lifting},
so it remains to verify its hypotheses.
First, $h\geq p\geq7>4$.
Next we prove that the dense fiber $A_0\subseteq H$
satisfies $\Gamma_3(A_0)=H$.

\smallskip\noindent
\emph{(i) If $h\ge 1235$.}
Because $a_0>h/2$, Lemma~\ref{lem:3-4-5}
applied inside the group $H$
gives $\Gamma_3(A_0)=H$.

\smallskip\noindent
\emph{(ii) If $h<1235$.}
Then   $h\le 1234$. Since $p=g/h$, $g\geq46319$ gives
\[
p=\frac{g}{h}\ >\ \frac{46319}{1234}\ >\ 37.
\]
Thus $p\ge 41$. Every prime divisor of $h$ is also
a prime divisor of $g$, and since $p$
is the smallest prime divisor of $g$,
every prime divisor of $h$ is at least $p\ge 41$.
If $h$ were composite, it would have a
prime divisor $q\ge 41$, hence $h\ge q^2\ge 41^2=1681$,
contradicting $h\le 1234$. Therefore $h$ is prime.
Consequently $H$ is cyclic of prime order $h\ge p\ge 41> 13$, and by
Corollary~\ref{cor:half_dense_prime} we have
$\Gamma_3(A_0)=H$.

\smallskip\noindent
In both cases we have shown $\Gamma_3(A_0)=H$, and therefore
Proposition~\ref{prop:lifting} applies (since $p\ge 7$, $h\ge 4$, $a>g/2$) and yields
\[
\Gamma_k(A)=G\qquad\text{for every }k\text{ with }6\le k\le a-6.
\]
\end{proof}

\subsection{Proof of Theorem B}
\label{subsec:proof_B}
We now turn to the proof of Theorem B,
which aims to pin down the  behavior of the critical number.
Recall that the $k$-critical number $\mu_k(G)$ is
defined as the smallest integer
$m$ such that any sumsubset of size $m$ covers
the entire group:
\[
\mu_k(G)=\min\bigl\{m\in\mathbb N:\ \text{every }A\subseteq G\text{ with }|A|\ge m\text{ satisfies }\Gamma_k(A)=G\bigr\}.
\]
We begin by resolving the case where $G$
has even order. In this setting,
the presence of index-$2$ subgroups creates
a sharp threshold at density $1/2$, allowing
us to determine the critical number precisely.

\begin{lemma}
\label{lem:even}
Let $G$ be a finite abelian group of   order
$g\ \ge\ 624\,|G[2]|\;+\;1846$.
If $g$ is even, then $\mu_k(G)=g/2+1$ for
$3\ \le\ k\ \le\ g/2-2$.
\end{lemma}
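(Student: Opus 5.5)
The plan is to prove two inequalities: $\mu_k(G)\le g/2+1$, which comes from Theorem A, and $\mu_k(G)\ge g/2+1$, which needs an explicit obstruction of size $g/2$.

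\emph{Upper bound.} Let $A\subseteq G$ with $|A|\ge g/2+1$, so $|A|>g/2$. The hypothesis $g\ge 624|G[2]|+1846$ is exactly the even-order hypothesis of Theorem A. Hence $\Gamma_k(A)=G$ for every $3\le k\le |A|-3$. Since $k\le g/2-2\le |A|-3$, the given $k$ lies in this range, and therefore $\mu_k(G)\le g/2+1$.

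\emph{Lower bound.} We need a set $A$ with $|A|=g/2$ and $\Gamma_k(A)\ne G$. Because $g$ is even, there is a subgroup $H$ of index $2$ with quotient map $\pi:G\to\mathbb Z/2\mathbb Z$. Pick a coset $A=\gamma+H$. Every sum of $k$ distinct elements of $A$ then lies in $k\gamma+H$, which is a single coset of $H$. So $\Gamma_k(A)\subseteq k\gamma+H\ne G$. The choice of $\gamma$ does not matter: $A=H$ works for all $k$. For this $A$ we also need $\Gamma_k(A)\neq\emptyset$, i.e.\ $k\le |A|=g/2$, which holds since $k\le g/2-2$. Sets $B$ with $|B|<g/2$ need not be examined, because $\mu_k$ is a threshold for all larger sets. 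One $A$ of size $g/2$ failing to cover $G$ already shows that $m=g/2$ does not satisfy the defining property. Any smaller $m$ fails as well, since the same $A$ satisfies $|A|\ge m$. Hence $\mu_k(G)\ge g/2+1$.

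I expect no real obstacle; the argument is bookkeeping. The only thing to check carefully is that the range $3\le k\le g/2-2$ fits inside the range of Theorem A for every set of size at least $g/2+1$. It does, because $|A|-3\ge g/2-2$.
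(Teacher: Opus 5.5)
Your proposal is correct and follows the paper's proof essentially verbatim. You get the lower bound from an index-$2$ subgroup $H$ (the paper uses $A=H$ directly, with $\Gamma_k(H)\subseteq H\neq G$). You get the upper bound from Theorem~A via $k\le g/2-2\le |A|-3$ for every $A$ with $|A|\ge g/2+1$.
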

\begin{proof}
Since $G$ has a subgroup $H\le G$ of index $2$,
we have $|H|=g/2$.
Every restricted $k$-sum of elements of $H$ lies in $H$,
so $\Gamma_k(H)\subseteq H\neq G$.
Hence there exists a subset of size $g/2$ with
$\Gamma_k(\cdot)\neq G$, and therefore
$
\mu_k(G)\ \ge\ \frac{g}{2}+1.
$
Let $A\subseteq G$ with $|A|\ge g/2+1$.
Then $|A|>g/2$.
Consequently,
\[
k\ \le\ \frac{g}{2}-2\ \le\ |A|-3,
\]
so $k$ lies in the range required by Theorem~A.
By Theorem~A, $\Gamma_k(A)=G$.
Since this holds for every $A$ with $|A|\ge g/2+1$, we obtain
$
\mu_k(G)\ \le\ g/2+1.
$
Combining with the lower bound yields $\mu_k(G)=g/2+1$.
\end{proof}

Having determined the exact critical number for
groups of even order, we now turn our attention to the
general setting. To derive bounds for groups of odd order,
we require a structural stability result--an
inverse theorem--describing sets that fail to cover the group.
The following lemma asserts that any such set,
provided it is sufficiently dense, must be
essentially contained in a structured subset of
small index (specifically, index $2$ or $5$).

\begin{lemma}
\label{lem:B}
Let $G$ be a finite abelian group of order $g$ satisfying
$
g\ \ge\ 312\,|G[2]|+923.
$
Let $A\subseteq G$ have size $a=|A|$ and
let $k$ be an integer with $3\le k\le a$.
Set $t=k-3$ and assume
\begin{equation}\label{eq:Bsize_assumption}
a-t\ >\ \frac{5}{13}\,g.
\end{equation}
If $\Gamma_k(A)\neq G$, then \emph{one} of the
following two conclusions holds:
\begin{enumerate}[label=\textup{(\alph*)},leftmargin=2.0em]
\item there exists a subgroup $H\le G$ of
index $2$ and a coset    $C$ of $H$ such that
\[
|A\setminus C|\ \le\ t;
\]
\item there exists a subgroup $K\le G$ of index $5$ and
two cosets $C_1,C_2$ of $K$
such that
\[
|A\setminus (C_1\cup C_2)|\ \le\ t.
\]
\end{enumerate}
\end{lemma}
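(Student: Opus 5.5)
We argue by contraposition, in the same spirit as the proof of Lemma~\ref{lem:3-4-5}: assume that neither (a) nor (b) holds, and we will show that $\Gamma_k(A)=G$. The idea is to remove $t=k-3$ suitably chosen elements from $A$ so that the remaining set $B$ escapes every obstruction in Lemma~\ref{lem:Lev}. Then $\Gamma_3(B)=G$, and the removed elements can be added back to obtain $k$-sums.

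\emph{Step 1: the reduction.} Suppose we have distinct $\alpha_1,\dots,\alpha_t\in A$ such that $B=A\setminus\{\alpha_1,\dots,\alpha_t\}$ satisfies three conditions: $|B|>5g/13$, $B$ lies in no coset of an index-$2$ subgroup, and $B$ lies in no union of two cosets of an index-$5$ subgroup. Then Lemma~\ref{lem:Lev} gives $\Gamma_3(B)=G$. For any $\gamma\in G$, write $\gamma-(\alpha_1+\cdots+\alpha_t)$ as a sum of three distinct elements of $B$. These three elements together with $\alpha_1,\dots,\alpha_t$ are $k$ distinct elements of $A$ summing to $\gamma$. Note that $|B|=a-t>5g/13$ holds automatically by \eqref{eq:Bsize_assumption}, whatever $T=\{\alpha_1,\dots,\alpha_t\}$ we choose. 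So it suffices to find a single $t$-subset $T\subseteq A$ for which $A\setminus T$ avoids both structured obstructions.

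\emph{Step 2: choosing $T$.} Call a structure $S$ either an index-$2$ coset or a union of two cosets of an index-$5$ subgroup. The negation of (a) and (b) says that $|A\setminus S|\ge t+1$ for every structure $S$. For a fixed $S$, the set $A\setminus T$ fails to lie in $S$ exactly when $T$ misses at least one point of $A\setminus S$. Since $|A\setminus S|>t=|T|$, this holds for every single $T$. In other words, no $t$-subset removal can push $A\setminus T$ into $S$: if $A\setminus T\subseteq S$, then $A\setminus S\subseteq T$, so $|A\setminus S|\le t$, a contradiction. Therefore any $t$-subset $T$ works, and we may simply take an arbitrary one. This matches the point at which Lemma~\ref{lem:3-4-5} needed its elaborate claim; here the hypothesis ``$|A\setminus C|\le t$ fails'' is tailored to make the choice trivial.

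\emph{Step 3: checking the bookkeeping.} We need $t\le a$ so that a $t$-subset exists; this follows from $k\le a$. If $t=0$, then $B=A$ and the argument is immediate. The remaining point is to make sure the dichotomy of Lemma~\ref{lem:Lev} is applied to $B\subseteq G$ in the group $G$ itself, so the order hypothesis $g\ge 312|G[2]|+923$ is exactly what is needed. I expect no serious obstacle. The only step needing care is the observation that $A\setminus T\subseteq S$ forces $A\setminus S\subseteq T$, which is what lets the negated hypotheses rule out alternatives (ii) and (iii) of Lemma~\ref{lem:Lev} for every choice of $T$ simultaneously.
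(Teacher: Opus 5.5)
Your proof is correct and is essentially the paper's argument in contrapositive form. The paper fixes a target $\alpha\notin\Gamma_k(A)$ and an arbitrary $t$-subset $T\subseteq A$, deduces $\Gamma_3(A\setminus T)\neq G$, and obtains (a) or (b) from alternatives (ii)/(iii) of Lemma~\ref{lem:Lev} via $A\setminus C\subseteq T$; this is exactly your observation that $A\setminus T\subseteq S$ forces $A\setminus S\subseteq T$, so any choice of $T$ works.
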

\begin{proof}
Assume $\Gamma_k(A)\neq G$ and choose
$\alpha\in G\setminus \Gamma_k(A)$.
Let $t=k-3$ and choose an arbitrary
subset $T\subseteq A$ with $|T|=t$.
(If $t=0$, then $T=\varnothing$.)
Set $B=A\setminus T$.
Then $|B|=a-t>5g/13$ by \eqref{eq:Bsize_assumption}.
Write $\overline{T}=\sum_{\tau\in T}\tau\in G$
(with $\overline{T}=0$ if $t=0$).

\smallskip\noindent
\textbf{Claim 1.} $\Gamma_3(B)\neq G$.

\smallskip\noindent
\emph{Proof.}
If $\Gamma_3(B)=G$, then there exist pairwise distinct
$\beta_1,\beta_2,\beta_3\in B$ such that
\[
\beta_1+\beta_2+\beta_3=\alpha-\overline{T}.
\]
Since $B\cap T=\varnothing$, the $k=t+3$
elements $T\cup\{\beta_1,\beta_2,\beta_3\}$
are pairwise distinct
and their sum equals $\alpha$,
contradicting $\alpha\notin \Gamma_k(A)$.
\hfill$\square$

\smallskip\noindent
By Claim~1 we have $\Gamma_3(B)\neq G$.
Apply Lemma~\ref{lem:Lev} to the set $B$.
Since $|B|>5g/13$, alternative \textup{(i)} fails,
and since $\Gamma_3(B)\neq G$,
alternative \textup{(iv)} fails.
Therefore either alternative \textup{(ii)} or \textup{(iii)} holds.

\smallskip\noindent
\textbf{Case 1.} Alternative \textup{(ii)} holds for $B$.
Then $B\subseteq C$ for some coset $C$ of an index-$2$ subgroup of $G$.
Since $A=B\cup T$, it follows that $A\setminus C\subseteq T$ and hence $|A\setminus C|\le |T|=t$.
This is conclusion \textup{(a)}.

\smallskip\noindent
\textbf{Case 2.} Alternative \textup{(iii)} holds for $B$.
Then $5$ is a divisor of $g$ and
$B\subseteq C_1\cup C_2$ for two cosets $C_1,C_2$ of
an index-$5$ subgroup of $G$.
Again $A=B\cup T$ implies $A\setminus (C_1\cup C_2)\subseteq T$ and hence
$|A\setminus (C_1\cup C_2)|\le t$.
This is conclusion \textup{(b)}.
\end{proof}

\smallskip\noindent
\begin{corollary}
\label{cor:odd}
Assume $G$ has odd order $g\ge1235$.
Let $A\subseteq G$ have size $a=|A|$ and let $k\in\{3,\dots,a\}$.
Set $t=k-3$.
\begin{enumerate}[label=\textup{(\alph*)},leftmargin=2.0em]
\item If $5\nmid g$ and $a-t>(5/13)g$, then $\Gamma_k(A)=G$.
\item If $5\mid g$ and $a-t>(2/5)g$, then $\Gamma_k(A)=G$.
\end{enumerate}
\end{corollary}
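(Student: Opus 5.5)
The plan is to derive both parts of Corollary~\ref{cor:odd} from Lemma~\ref{lem:B} by contraposition. Since $g$ is odd, $G[2]=\{0\}$, so $|G[2]|=1$. The hypothesis $g\ge 1235=312+923$ is then exactly $g\ge 312|G[2]|+923$, and Lemma~\ref{lem:B} applies. I will assume $\Gamma_k(A)\neq G$ and show that each of its conclusions \textup{(a)} and \textup{(b)} is impossible under the stated size hypothesis.

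\textbf{Part (a).} Assume $5\nmid g$ and $a-t>\frac{5}{13}g$. This is precisely hypothesis \eqref{eq:Bsize_assumption}, so if $\Gamma_k(A)\neq G$, Lemma~\ref{lem:B} gives \textup{(a)} or \textup{(b)}.
\begin{itemize}
\item Conclusion \textup{(a)} needs a subgroup of index $2$. This would force $2\mid g$, contradicting that $g$ is odd.
\item Conclusion \textup{(b)} needs a subgroup of index $5$. This would force $5\mid g$, contradicting the assumption.
\end{itemize}
Hence $\Gamma_k(A)=G$.

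\textbf{Part (b).} Assume $5\mid g$ and $a-t>\frac{2}{5}g$. Since $\frac25>\frac{5}{13}$, hypothesis \eqref{eq:Bsize_assumption} again holds. Conclusion \textup{(a)} is excluded by oddness as before. Suppose \textup{(b)} holds, so there are two cosets $C_1,C_2$ of an index-$5$ subgroup $K$ with $|A\setminus(C_1\cup C_2)|\le t$. Then
\[
a\le |C_1\cup C_2|+t\le \tfrac{2g}{5}+t,
\]
which rearranges to $a-t\le \frac{2}{5}g$. This contradicts the hypothesis, so $\Gamma_k(A)=G$.

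There is no genuine obstacle in this argument. The only points to check are that oddness makes $|G[2]|=1$, so the numerical condition of Lemma~\ref{lem:B} reduces to $g\ge 1235$, and that an index-$m$ subgroup exists only when $m\mid g$. Both are immediate.
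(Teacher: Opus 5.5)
Your proof is correct and follows the paper's argument: apply Lemma~\ref{lem:B}, exclude its conclusion (a) by oddness, and exclude conclusion (b) either because $5\nmid g$ or via the size bound $a\le |C_1\cup C_2|+t\le 2g/5+t$. You also make explicit two points the paper leaves implicit: $|G[2]|=1$ reduces the hypothesis of Lemma~\ref{lem:B} to exactly $g\ge 1235$, and $2/5>5/13$ ensures that lemma applies in part (b).
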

\begin{proof}
In odd order, $G$ has no index-$2$ subgroup,
so alternative \textup{(a)} in Lemma~\ref{lem:B} is impossible.
If $5\nmid g$, then $G$ has no index-$5$ subgroup,
so alternative \textup{(b)} is also impossible.
Thus $\Gamma_k(A)\neq G$ cannot occur under
the hypothesis $a-t>(5/13)g$, proving \textup{(a)}.

If $5\mid g$, then an index-$5$ subgroup exists,
but every union of two of its cosets has size exactly $2g/5$.
Hence $a-t>(2/5)g$ rules out alternative \textup{(b)} by size.
Again alternative \textup{(a)} is impossible in odd order,
so $\Gamma_k(A)\neq G$ cannot occur.
\end{proof}

Corollary \ref{cor:odd} highlights that the sufficient density for covering depends on the divisibility of the group order by $5$. To streamline the notation in the subsequent analysis, we encapsulate these thresholds into a density constant $c(g)$. This allows us to uniformly state the bounds for $\mu_k(G)$ for small values of $k$. Define
\begin{equation*}
c(g)=
\begin{cases}
2/5, & \text{if }5\mid g,\\[2pt]
5/13, & \text{if }5\nmid g.
\end{cases}
\end{equation*}
\begin{lemma}
\label{lem:p3_k4_k5}
Let $G$ be a finite abelian group of odd order $g$.
Let $c(g)$ be as above.
If $g\geq1235$, then
\[
\mu_4(G)\ \le\ \Bigl\lfloor c(g)g\Bigr\rfloor +2,
\qquad
\mu_5(G)\ \le\ \Bigl\lfloor c(g)g\Bigr\rfloor +3.
\]
\end{lemma}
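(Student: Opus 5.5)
This follows directly from Corollary~\ref{cor:odd}; the only work is an integer computation. The definition of $\mu_k(G)$ says it suffices to exhibit a threshold $m$ such that every $A\subseteq G$ with $|A|\ge m$ satisfies $\Gamma_k(A)=G$. So for $k=4$ I take $m=\lfloor c(g)g\rfloor+2$, and for $k=5$ I take $m=\lfloor c(g)g\rfloor+3$.

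Fix $k\in\{4,5\}$ and let $A$ satisfy $|A|=a\ge \lfloor c(g)g\rfloor+k-2$. Put $t=k-3\in\{1,2\}$. Then
\[
a-t\ \ge\ \lfloor c(g)g\rfloor+k-2-(k-3)\ =\ \lfloor c(g)g\rfloor+1\ >\ c(g)g .
\]
This is exactly the density hypothesis of Corollary~\ref{cor:odd}. If $5\nmid g$, part (a) applies with $c(g)=5/13$. If $5\mid g$, part (b) applies with $c(g)=2/5$.

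The side conditions of the corollary also hold. The group has odd order $g\ge 1235$ by assumption. We need $k\le a$, and this is immediate because $a>c(g)g\ge 5g/13>5$ for $g\ge1235$. Hence $\Gamma_k(A)=G$, which gives the claimed bounds on $\mu_4(G)$ and $\mu_5(G)$.

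There is no real obstacle here. The one point to check is the strict inequality $\lfloor x\rfloor+1>x$, which holds for every real $x$ and so needs no integrality of $c(g)g$. The argument also shows that the general pattern is $\mu_k(G)\le\lfloor c(g)g\rfloor+k-2$ for any $k$, and in particular $\mu_3(G)\le\lfloor c(g)g\rfloor+1$.
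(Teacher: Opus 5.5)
Your proof is correct and is essentially the paper's argument. The paper uses Corollary~\ref{cor:odd} only at $k=3$, peels off one element $\beta$ by hand to reach $k=4$, and leaves $k=5$ as ``similar''; you apply Corollary~\ref{cor:odd} directly at $t=k-3$, which carries out the same removal step (via Lemma~\ref{lem:B}) and also settles $k=5$ explicitly.
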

\begin{proof}
Since   the order $g$ is odd,
Corollary~\ref{cor:odd}   applies.
Let $B\subseteq G$ satisfy $\abs{B}>\,c(g)\,g$.
Then $\Gamma_3(B)=G$ by taking $k=3$ (thus $t=0$)
in Corollary~\ref{cor:odd}.

\medskip
\noindent\textbf{The case $k=4$.}
Let $A\subseteq G$ with
$
\abs{A} \ge \bigl\lfloor c(g)\,g\bigr\rfloor +2.
$
Choose any element $\beta\in A$ and set
$A'=A\setminus\{\beta\}$. Then
$
\abs{A'}\ge \bigl\lfloor c(g)\,g\bigr\rfloor +1\ > c(g)\,g,
$
so we have $\Gamma_3(A')=G$.
Now fix an arbitrary $\alpha\in G$. Since $\Gamma_3(A')=G$,
there exist distinct  $\gamma_1,\gamma_2,\gamma_3\in A'$ such that
$
\gamma_1+\gamma_2+\gamma_3 =\alpha-\beta.
$
Then $\gamma_1,\gamma_2,\gamma_3,\beta\in A$
are pairwise distinct and $\gamma_1+\gamma_2+\gamma_3+\beta=\alpha$, hence
$\alpha\in \Gamma_4(A)$. Since $\alpha$ is arbitrary,
$\Gamma_4(A)=G$, proving
\[
\mu_4(G)\ \le\ \Bigl\lfloor c(g)\,g\Bigr\rfloor +2.
\]
The proof for the case $k=5$ is similar, and we omit its proof here.
\end{proof}

With the specific small values $k=4, 5$ handled,
we now turn to the general range of $k$.
The proof strategy relies on a case analysis based
on the smallest prime divisor $p(G)$ of the group order.
We begin with the case $p(G)=3$,
utilizing the structure of the
quotient $\mathbb{Z}/3\mathbb{Z}$.
\begin{lemma}
\label{lem:odd3}
Let $G$ be a finite abelian group of order $g$ with $p(G)=3$, and
$
g\geq3\cdot46319.
$
Let $k$ be an integer with
$
6\ \le\ k\ \le\ g/3-2.
$
Define $c(g)$ as above.
Then
\[
\mu_k(G)\ \le\ \bigl\lfloor c(g)\,g\bigr\rfloor\ +\ 9.
\]
\end{lemma}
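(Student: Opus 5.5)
The plan is to follow the proof of Proposition~\ref{prop:p3}, but with the density threshold $c(g)$ of Corollary~\ref{cor:odd} in place of the $1/2$-density Lemma~\ref{lem:3-4-5}. Fix $A\subseteq G$ with $a=|A|\ge \lfloor c(g)g\rfloor+9$; we must show $\Gamma_k(A)=G$.

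\textbf{Reduction to $k\le a/2$.} By Lemma~\ref{lem:comp} we may replace $k$ by $a-k$. We need the new length to be at least $3$. Since $a\ge 5g/13$ and $k\le g/3-2$, we have $a-k\ge 5g/13-g/3+2$, which is far above $3$ for $g\ge 3\cdot 46319$. So it suffices to treat $3\le k\le\lfloor a/2\rfloor$. The lengths $k=3,4,5$ already follow from Corollary~\ref{cor:odd} with $t=0$ and from Lemma~\ref{lem:p3_k4_k5}, whose additive constants $1,2,3$ are at most $9$. So assume $6\le k\le \lfloor a/2\rfloor$.

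\textbf{Projection and the dense fiber.} Let $\pi:G\to\mathbb Z/3\mathbb Z$ have kernel $H$, with $h=g/3\ge 46319$. Translate $A$ as in the proof of Lemma~\ref{lem:normal}: put a largest fiber in $H$, using only the pigeonhole step (not the $>h/2$ bound). This gives $a_0\ge a/3\ge (c(g)g+8)/3=c(g)h+8/3$.

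Next compare the constants. $H$ has odd order $h\ge 1235$. If $5\nmid g$ then $5\nmid h$, so $c(h)=c(g)=5/13$. If $5\mid g$ then $c(h)\le 2/5=c(g)$. Hence
\[
a_0-2\ \ge\ c(g)h+\tfrac23\ >\ c(h)h .
\]
Corollary~\ref{cor:odd} applied inside $H$ with $t\in\{0,1,2\}$ therefore gives $\Gamma_m(A_0)=H$ for $m\in\{3,4,5\}$. More generally it gives $\Gamma_3(B)=H$ for every $B\subseteq A_0$ with $|B|>c(h)h$.

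For the outer part $A_{out}=A_1\cup A_2$ we have $u=a-a_0\ge a-h\ge (5/13-1/3)g$, which is huge. In particular $u\ge 2$, so Lemma~\ref{lem:Z3} applies to its residue multiset $U$.

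\textbf{Case analysis, as in Proposition~\ref{prop:p3}.} Fix a target $\alpha$ with $s=\pi(\alpha)$.

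\emph{Case $u\ge k-3$.} Lemma~\ref{lem:Z3} gives $L\subseteq A_{out}$ with $|L|=\ell\in\{k-5,k-4,k-3\}$ and $\pi(\sum L)=s$. We then complete with $m=k-\ell\in\{3,4,5\}$ distinct elements of $A_0$, using $\Gamma_m(A_0)=H$.

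\emph{Case $u<k-3$.} Take $\ell\in\{u-2,u-1,u\}$ from Lemma~\ref{lem:Z3}. Pad with an arbitrary $T\subseteq A_0$ of size $t=k-3-\ell$, and finish with three elements of $B=A_0\setminus T$. As in Proposition~\ref{prop:p3}, $|B|\ge a-k+1\ge a/2+1$. We need $|B|>c(h)h$, and this holds because
\[
\frac a2\ \ge\ \frac{c(g)g}{2}\ >\ \frac{c(g)g}{3}\ \ge\ c(h)h .
\]
So $\Gamma_3(B)=H$, and the sum closes.

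\textbf{Main obstacle.} The delicate point is the bookkeeping of densities. We have to make sure the dense fiber, after removing up to two elements or a padding set $T$, still exceeds $c(h)h$, which requires comparing $c(h)$ with $c(g)$. We also have to check that the complement reduction never lands on a length outside $\{3,\dots,\lfloor a/2\rfloor\}$. The slack constant $+9$ should absorb the rounding losses from $\lceil a/3\rceil$ and the ``$-2$'' removals. I expect the comparison $c(h)\le c(g)$, together with the observation that $u$ is automatically large, to be the step needing the most care.
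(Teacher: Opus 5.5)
Your argument is correct, but it is organized differently from the paper's proof. The paper splits on the density of the dense fiber. When $a_0\le h/2$, it notes $u\ge a-h/2$, so Lemma~\ref{lem:Z3} applies at $\ell=k_0-5$ and only $m\in\{3,4,5\}$ elements of $A_0$ are needed (via Corollary~\ref{cor:odd}). When $a_0>h/2$, it invokes Theorem~A inside $H$ to get $\Gamma_m(A_0)=H$ for all $3\le m\le a_0-3$. It then chooses $|X|$ starting from $\max\{0,k_0-(a_0-3)\}$, so that arbitrarily long sums can be pushed into $A_0$.

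You instead split on $u\ge k-3$ versus $u<k-3$, exactly as in Proposition~\ref{prop:p3}. In the second case you pad with an arbitrary $T\subseteq A_0$ and close with a $3$-sum in $B=A_0\setminus T$. This works because $|B|\ge a-k+1\ge a/2+1$ and $a/2>\tfrac32\,c(h)h+4$, so the threshold of Corollary~\ref{cor:odd} in $H$ is cleared with plenty of room.

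What your route buys is that it never uses Theorem~A in the kernel, only Corollary~\ref{cor:odd}. It therefore needs just $h\ge 1235$, i.e.\ $g\ge 3705$, rather than the hypothesis $g\ge 3\cdot 46319$. The paper imposes that hypothesis precisely so that Theorem~A applies to $H$ in its case $a_0>h/2$. The paper's route, in turn, reuses the all-lengths conclusion of Theorem~A and so avoids the padding bookkeeping.

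Two minor remarks. Since $g=3h$, one has $5\mid g$ iff $5\mid h$, so $c(h)=c(g)$ exactly. Also, your separate treatment of $k=3,4,5$ is unnecessary: for $6\le k\le g/3-2$ one has $a-k\ge 2g/39+10$, so $\min\{k,a-k\}\ge 6$ automatically.
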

\begin{proof}
Since $p(G)=3$,
we have a subgroup $H\le G$ of index $3$, and let $h=g/3=|H|$.
Let $A\subseteq G$ with $|A|=a\ge \lfloor c(g)g\rfloor+9$.
Let $\pi:G\twoheadrightarrow G/H\cong \mathbb{Z}/3\mathbb{Z}$
be the quotient map.
For each $r\in \mathbb{Z}/3\mathbb{Z}$ define the fibers
\[
A_r=A\cap \pi^{-1}(r),\qquad a_r=|A_r|.
\]
Let $r_0\in \mathbb Z/3\mathbb Z$ be such that $a_{r_0}=\max_{r}a_r$.
Choose any $g_0\in \pi^{-1}(r_0)$ and replace $A$ by the translate
$A'=A-g_0$.  Then $\Gamma_k(A')=G$ if and
only if $\Gamma_k(A)=G$, and with
$A'_r=A'\cap \pi^{-1}(r)$ we have $a'_0=a_{r_0}=\max_{r}a'_r$.
Hence, after translation, we may assume $a_0=\max_{r}a_r$.
By maximality of $a_0$ we have $a_0\ge \lceil a/3\rceil$.
Let
$
u=a-a_0=|A\setminus A_0|.
$
Since $g=3h$ and $5\mid g$
if and only if $5\mid h$, we also have $c(g)=c(h)$.
Since $a\ge \lfloor c(g)g\rfloor+9\geq3c(h)h+8$,
we have $a/3\geq c(h)h+8/3$ and therefore
\[
a_0\ \ge\ \left\lceil \frac{a}{3}\right\rceil\ \ge\ a/3
\ge\ c(h)h+8/3>c(h)h+2.
\]
Since $g\ge 3\cdot46319$ and $g$ is odd,
we have $h=g/3\ge46319$.
Fix $k$ with $6\ \le\ k\ \le\ \frac{g}{3}-2$ and let $k_0=\min(k,a-k)$.
By Lemma~\ref{lem:comp}, it suffices to prove $\Gamma_{k_0}(A)=G$.
We claim $k_0\ge 6$. Indeed, $k\ge 6$ by hypothesis, and also
\[
a-k\ \ge\ \lfloor c(g)g\rfloor+9\ -\ (h-2)\ \ge\
c(g)g+10 - h\ =\ (3c(h)-1)h+10.
\]
Since $c(h)\ge 5/13$ and $h\geq46319$, one has
$k_0\ge 6$. We also have $k_0\le a/2$.
Fix an arbitrary target element $\alpha\in G$.
Write $s=\pi(\alpha)\in \Z/3\Z$.

\medskip
\noindent\textbf{Case 1:  $a_0\le h/2$.}
In this case,  $u=a-a_0\ge a-h/2$.
Since $k_0\le a/2$, $a>h$ and $a_0\le h/2$, we have
\[
k_0-5\ \le\ \frac{a}{2}-5\ <
\left(a-\frac{h}{2}\right)-2\ \le\ u-2.
\]
Consider the multiset $U$ in $\Z/3\Z$ obtained by
applying $\pi$ to the elements of $A\setminus A_0$.
It has size $u$ and is supported on $\{1,2\}$.
Applying Lemma~\ref{lem:Z3} with $\ell=k_0-5$ yields
\[
\Sigma_{k_0-5}(U)\ \cup\ \Sigma_{k_0-4}(U)\ \cup\ \Sigma_{k_0-3}(U)\ =\ \Z/3\Z,
\]
so we can choose $d\in\{0,1,2\}$ and a
subset $X\subseteq A\setminus A_0$ with
\[
|X|=(k_0-5)+d\in\{k_0-5,k_0-4,k_0-3\}
\qquad\text{and}\qquad
\pi\Bigl(\sum_{\gamma\in X}\gamma\Bigr)=s.
\]
Set $m=k_0-|X|\in\{5,4,3\}$ and
$\overline{X}=\sum_{\gamma\in X}\gamma$.
Then $m\in\{3,4,5\}$ and
$
\alpha-\overline{X}\in H.
$
By $a_0>c(h)h+2$, Corollary \ref{cor:odd}
applies to $A_0\subseteq H$ and yields $\Gamma_m(A_0)=H$.
Hence there exists a subset $M\subseteq A_0$ of size $m$
with $\overline{M}=\sum_{\theta\in M}\theta=\alpha-\overline{X}$.
Because $X\subseteq A\setminus A_0$ and $M\subseteq A_0$,
the sets $X$ and $M$ are disjoint, and
$
\overline{X}+\overline{M}=\alpha.
$
Thus $\alpha\in \Gamma_{k_0}(A)$, and since $\alpha$ is arbitrary, $\Gamma_{k_0}(A)=G$.

\medskip
\noindent\textbf{Case 2: $a_0>h/2$.}
In this case, Theorem~A
applied in $H$ gives
\begin{equation}\label{eq:alllengthsB}
\Gamma_t(A_0)=H\qquad\text{for every integer }t\text{ with }3\le t\le a_0-3.
\end{equation}
Let $U$ again denote the multiset
$\pi(A\setminus A_0)$ of size $u$ supported on $\{1,2\}$.
Define
\[
j\ =\ \max\{\,0,\ k_0-(a_0-3)\,\}.
\]
Then $k_0-j\le a_0-3$ by construction,
and also $k_0-j\ge 3$ since $j\le k_0-3$.
We claim $j\le u-2$.
If $j=0$ this is immediate as $u\ge a-h\ge (3c(h)-1)h+8>2$.
If $j>0$, then $j=k_0-(a_0-3)$ and
\[
j\le u-2\ \Longleftrightarrow\ k_0-(a_0-3)\le (a-a_0)-2\
\Longleftrightarrow\ k_0+5\le a,
\]
which holds since $k_0\le a/2$ and $a>12$.
Therefore Lemma~\ref{lem:Z3} applies at length $\ell=j$,
and we can choose $d\in\{0,1,2\}$ and a subset
$X\subseteq A\setminus A_0$ with
\[
|X|=j+d\in\{j,j+1,j+2\}
\qquad\text{and}\qquad
\pi\Bigl(\sum_{\gamma\in X}\gamma\Bigr)=s.
\]
Set $m=k_0-|X|=k_0-(j+d)$.
Since $j\ge k_0-(a_0-3)$ and $d\ge 0$, we have $m\le a_0-3$.
Also, we claim $m\ge 3$. Indeed, if $j=0$, then $m=k_0-d\ge 6-2=4$;
if $j>0$, then $k_0-j=a_0-3$, so $m\ge a_0-5$,
which is $\ge 3$ since $a_0$ is large.
Thus $m\in[3,a_0-3]$, so \eqref{eq:alllengthsB}
yields $\Gamma_m(A_0)=H$.
We have $\alpha-\sum_{\gamma\in X}\gamma\in H$,
so we may choose $M\subseteq A_0$ with $|M|=m$ and
\[
\sum_{\theta\in M}\theta=\alpha-\sum_{\gamma\in X}\gamma.
\]
Then $X\cap M=\emptyset$ and $X\cup M$
is a $k_0$-subset of $A$ summing to $\alpha$.
Hence $\alpha\in \Gamma_{k_0}(A)$, and $\Gamma_{k_0}(A)=G$.
\end{proof}

Having established the critical number bound for groups
with smallest prime divisor $3$,
we now advance to the case $p(G)=5$.
While the overall strategy of utilizing the quotient structure $\mathbb{Z}/5\mathbb{Z}$ remains the same,
the arithmetic details require a slightly larger
constant term to ensure robust covering.
\begin{lemma}
\label{lem:odd5}
Let $G$ be a finite abelian group of order $g$ with smallest
prime divisor $p(G)=5$.
Assume
$
g\ \ge\ 5\cdot1235.
$
Let $k$ be any integer in the  range
$
5\ \le\ k\ \le\ \frac{g}{5}-2.
$
Then we have
 $\mu_k(G)\le 2g/5+21$.
\end{lemma}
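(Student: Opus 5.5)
I will mirror the proof of Lemma~\ref{lem:odd3}, replacing the $\mathbb Z/3\mathbb Z$ covering tool (Lemma~\ref{lem:Z3}) by its index-$5$ analogue (Lemma~\ref{lem:Z5}). Since $5\mid g$, we have $c(g)=2/5$. Let $H\le G$ have index $5$, put $h=g/5\ge 1235$, and let $\pi:G\twoheadrightarrow\mathbb Z/5\mathbb Z$ be the quotient map. Take $A$ with $a=|A|\ge \lfloor 2g/5\rfloor+21$ and translate so that $a_0=\max_r a_r$, as in Lemma~\ref{lem:normal}. Pigeonhole gives $a_0\ge\lceil a/5\rceil$. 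Every prime divisor of $h$ is at least $5$, and $5$ may or may not divide $h$, so $c(h)\in\{2/5,5/13\}$ and $c(h)h\le 2h/5$.

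The first issue is that $a_0\ge a/5\approx 2h$ gives no density inside $H$ by itself. I therefore split into two cases.

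\textbf{Case $a_0\le h/2$.} Here $u=a-a_0\ge a-h/2$ is large. Put $k_0=\min(k,a-k)\le a/2$; by Lemma~\ref{lem:comp} it suffices to show $\Gamma_{k_0}(A)=G$. Apply Lemma~\ref{lem:Z5} at $\ell=k_0-7$, which needs $\ell\le u-4$. This produces $X\subseteq A\setminus A_0$ with $|X|\in[k_0-7,k_0-3]$ and $\pi(\sum X)=\pi(\alpha)$. It remains to find $m=k_0-|X|\in\{3,\dots,7\}$ distinct elements of $A_0$ hitting the required coset representative. For this I need $\Gamma_m(A_0)=H$, which by Corollary~\ref{cor:odd} (applied in $H$, whose order $h\ge1235$ is odd) follows once $a_0-(m-3)>c(h)h$, i.e.\ $a_0>2h/5+4$.

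This is the main obstacle: in this case $a_0$ may be as small as $a/5$. Since $a/5\ge 2g/25+4=2h/5+4$ plus a constant, the averaged bound gives roughly $a_0\ge 2h/5+4.2$. The additive constant $21$ is presumably chosen exactly so that $\lceil a/5\rceil\ge 2h/5+5$ (indeed $21/5>4$). I will verify this arithmetic carefully, using $\lfloor 2g/5\rfloor=2h$ because $5\mid g$. Small $k_0$ (namely $k_0=5,6$, where $k_0-7<0$) must be handled separately, in the style of Case~1 of Proposition~\ref{prop:p5}. If $u$ is concentrated in one nonzero fiber, I would instead use Lemma~\ref{lem:four-nonzero} on four elements of $A\setminus A_0$ together with lengths $m\le 7$ in $A_0$; if all of $A\setminus A_0$ lies in a single fiber, then $a\le 2h$, contradicting $a\ge 2h+21$. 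For $k_0=5$ I may simply invoke $\mu_5$ from Lemma~\ref{lem:p3_k4_k5}, and reduce $k_0=6$ similarly by peeling one element and applying Corollary~\ref{cor:odd} with $t=3$.

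\textbf{Case $a_0>h/2$.} Theorem~A in $H$ (whose order $h\ge1235$ has odd order, so I check the corresponding hypothesis, possibly routing through $h$ prime or Lemma~\ref{lem:3-4-5} as in Subcase~2c) gives $\Gamma_t(A_0)=H$ for all $3\le t\le a_0-3$. Set $j=\max\{0,k_0-(a_0-3)\}$ and apply Lemma~\ref{lem:Z5} at $\ell=j$, which needs $j\le u-4$, i.e.\ $k_0+7\le a$; this holds since $k_0\le a/2$. The result is $X$ with $|X|\in[j,j+4]$, leaving $m=k_0-|X|\in[3,a_0-3]$, where $m\ge3$ because $a_0$ is large and $k_0\ge5$ needs checking when $j=0$ and $k_0\in\{5,6\}$; those cases go back to the small-$k$ treatment. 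Choosing $M\subseteq A_0$ with $|M|=m$ summing to $\alpha-\sum X$ completes the argument.
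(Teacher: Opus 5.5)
Your first case is essentially the paper's proof. The second case, however, has a real gap. For $a_0>h/2$ you invoke Theorem~A inside $H$ to get $\Gamma_t(A_0)=H$ for all $3\le t\le a_0-3$. Since every prime divisor of $h$ is at least $5$, Theorem~A in $H$ requires $h\ge 6175$ (if $5\mid h$) or $h\ge 46319$ (if $p(H)\ge 7$). The hypothesis $g\ge 5\cdot 1235$ only gives $h\ge 1235$. This is exactly why Lemma~\ref{lem:odd3}, which you are mirroring, assumes $g\ge 3\cdot 46319$.

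The detours you mention do not close the gap. First, $h$ need not be prime. Second, Lemma~\ref{lem:3-4-5} in $H$ only gives $\Gamma_t(A_0)=H$ for $3\le t\le d$ and $a_0-d\le t\le a_0-3$, where $d=a_0-\lceil 2h/5\rceil+2$. With your choice $j=\max\{0,k_0-(a_0-3)\}$, the leftover length $m$ can fall in the uncovered middle range. For example, $a_0\approx 0.6h$ and $k_0\approx 0.3h$ give $m\approx 0.3h$, which lies in neither range.

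The fix is to drop the case split, since your first-case argument never really used $a_0\le h/2$.
\begin{itemize}
\item The pigeonhole bound $a_0\ge (2h+21)/5=2h/5+21/5$ holds in every case. It gives $a_0-4>2h/5\ge c(h)h$, so Corollary~\ref{cor:odd} in $H$ yields $\Gamma_m(A_0)=H$ for all $m\in\{3,\dots,7\}$.
\item Since $a_0\le h$ and $k\le h-2$, you always have $u\ge a-h\ge h+21>k-3$. Hence Lemma~\ref{lem:Z5} at $\ell=k-7$ applies for every $k\ge 7$ and leaves $m=k-\ell\in\{3,\dots,7\}$.
\item The values $k\in\{5,6\}$ follow from Corollary~\ref{cor:odd} (equivalently Lemma~\ref{lem:Lev}) applied in $G$ with $t=k-3$, because $a-t\ge 2h+18>2g/5$.
\end{itemize}
This is precisely the paper's argument; it splits on whether $25\mid g$ only to identify $c(h)$.

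Two small remarks. Your reduction to $k_0=\min(k,a-k)$ is vacuous here, since $a-k\ge h+23>k$. Also, the claim $\lceil a/5\rceil\ge 2h/5+5$ is true only when $5\mid h$. You only need $a_0>2h/5+4$, and that holds uniformly.
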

\begin{proof}
Let $h=g/5$.
Fix $k$ with $5\le k\le h-2$.
Let $\pi:G\twoheadrightarrow \Z/5\Z$ be a surjection and let $H=\ker(\pi)$, so $|H|=h$.
The hypothesis $g\geq5\cdot1235$
implies
$
h=\frac{g}{5}\ \ge1235,
$
so Lemma~\ref{lem:Lev} applies inside $H$.
Let $A\subseteq G$ with size $a=|A|\geq2g/5+21$.
Let
\[
A_r=A\cap \pi^{-1}(r),\qquad a_r=|A_r|\qquad (r\in\Z/5\Z).
\]
Let $r_0\in \mathbb Z/5\mathbb Z$ be such that $a_{r_0}=\max_{r}a_r$.
Choose any $g_0\in \pi^{-1}(r_0)$ and replace $A$ by the translate
$A'=A-g_0$.  Then $\Gamma_k(A')=G$ if and
only if $\Gamma_k(A)=G$, and with
$A'_r=A'\cap \pi^{-1}(r)$ we have $a'_0=a_{r_0}=\max_{r}a'_r$.
Hence, after translation, we may assume $a_0=\max_{r}a_r$.
Define the outside part
\[
A_{\mathrm{out}}=A_1\cup A_2\cup A_3\cup A_4,
\qquad u=|A_{\mathrm{out}}|=|A|-a_0.
\]
Also define the multiset $U$ of residues of $A_{\mathrm{out}}$ in $\Z/5\Z$; equivalently,
$U$ contains $a_r$ copies of $r$ for each $r\in\{1,2,3,4\}$,
so the multiset $U$ has size $u$.
We will apply Corollary \ref{cor:odd} to
$A_0\subseteq H$.

\smallskip
\noindent\textbf{Case (a): $25\nmid g$.}
As  $|A|\ge 2h+21$, we have
$a_0\ge \lceil |A|/5\rceil\ge \lceil (2h+21)/5\rceil$.
Since $h\ge  1235$, one checks that
\[
\Bigl\lceil \frac{2h+21}{5}\Bigr\rceil\
>\ \frac{5h}{13}+4.
\]
Corollary \ref{cor:odd} yields
\begin{equation}\label{eq:internal_3to7}
\Gamma_m(A_0)=H\qquad\text{for every }m\in\{3,4,5,6,7\}.
\end{equation}

\smallskip
\noindent\textbf{Case (b): $25\mid g$.}
As  $|A|\ge 2h+21$, we have
$a_0\ge \lceil |A|/5\rceil\ge \lceil (2h+21)/5\rceil\ge 2h/5+5$,
so $a_0>2h/5+4$. Again
Corollary \ref{cor:odd} yields \eqref{eq:internal_3to7}.
In either case we may assume that
\eqref{eq:internal_3to7} holds.

Fix an arbitrary target $\alpha\in G$ and
set $s=\pi(\alpha)\in \Z/5\Z$.

\smallskip
\noindent\textbf{Subcase 3.1: $k\in\{5,6\}$.}
Let $t=k-3\in\{2,3\}$ and choose any $t$-subset $T\subseteq A$.
Set $B=A\setminus T$.
We have $|A|\ge 2h+21$, hence $|B|\ge 2h+18>2h=2g/5$.
Since $g$ is odd, $G$ has no index-$2$ subgroup, so alternative \textup{(ii)} in Lemma~\ref{lem:Lev} is impossible.
Also $|B|>2g/5>5g/13$ excludes \textup{(i)},
and $|B|>2g/5$ excludes \textup{(iii)}.
Therefore Lemma~\ref{lem:Lev} gives $\Gamma_3(B)=G$.
Let $\overline{T}=\sum_{\tau\in T}\tau$ .
Choose distinct $\beta_1,\beta_2,\beta_3\in B$ such that
$\beta_1+\beta_2+\beta_3=\alpha-\overline{T}$.
Then $T\cup\{\beta_1,\beta_2,\beta_3\}$ is a
$k$-subset of $A$ summing to $\alpha$.
Hence $\alpha\in\Gamma_k(A)$.

\smallskip
\noindent\textbf{Subcase 3.2: $k\ge 7$.}
Set $\ell_0=k-7\ge 0$.
We claim $\ell_0\le u-4$.
Indeed, $a_0\le h$, so $u=|A|-a_0\ge |A|-h\geq h+21>k-3=\ell_0+4$.
Therefore Lemma~\ref{lem:Z5}
applies to $U$ at $\ell=\ell_0$ and yields
an integer
\[
\ell\in\{\ell_0,\ell_0+1,\ell_0+2,\ell_0+3,\ell_0+4\}=\{k-7,k-6,k-5,k-4,k-3\}
\]
and a subset $X\subseteq A_{\mathrm{out}}$ with $|X|=\ell$ such that
\[
\pi\Bigl(\sum_{\gamma\in X}\gamma\Bigr)=s.
\]
Let $\overline{X}=\sum_{\gamma\in X}\gamma\in G$,
so $\alpha-\overline{X}\in H$.
Put $m=k-\ell$, so $m\in\{3,4,5,6,7\}$.
By \eqref{eq:internal_3to7},
$\Gamma_m(A_0)=H$, hence there exist distinct
$\alpha_1,\dots,\alpha_m\in A_0$
such that $\alpha_1+\cdots+\alpha_m=\alpha-\overline{X}$.
Since $A_0\subseteq H$ and $X\subseteq G\setminus H$, the $k$ elements in $X\cup\{\alpha_1,\dots,\alpha_m\}$ are pairwise distinct and sum to $\alpha$.
Thus $\alpha\in\Gamma_k(A)$.

\smallskip
In both subcases, $\alpha\in\Gamma_k(A)$.
Since $\alpha\in G$ is arbitrary, we have $\Gamma_k(A)=G$.
\end{proof}

Having handled the groups with smallest prime divisors
$2, 3,$ and $5$, we proceed to the general case where $p(G) \ge 7$.
The argument here relies on a different mechanism:
extending sumsets of small length to larger length $k$
by adding disjoint pairs with a constant sum.
To support this  pair padding  strategy,
the following lemma establishes a quantitative lower
bound on the number of such pairs in any sufficiently dense subset.
\begin{lemma}
\label{lem:many_pairs}
Let $G$ be a finite abelian group of odd order $g$,
and let $A\subseteq G$ have size $a=|A|$.
Then there exists $\beta\in G$ for which $A$
contains at least
\[
n_{pair}\ \ge\ \frac{1}{2}\Bigl(\frac{a^2}{g}-1\Bigr)
\]
pairwise disjoint unordered pairs
$\{\alpha_1,\alpha_2\}\subseteq A$ with
$\alpha_1\neq \alpha_2$ and $\alpha_1+\alpha_2=\beta$.
\end{lemma}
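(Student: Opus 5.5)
We will argue by double counting the representations of group elements as sums of two elements of $A$. For each $\beta\in G$ let
\[
r(\beta)=\bigl|\{(\alpha_1,\alpha_2)\in A\times A:\ \alpha_1+\alpha_2=\beta\}\bigr|
\]
count ordered pairs, the diagonal $\alpha_1=\alpha_2$ included. Every ordered pair contributes to exactly one $\beta$, so $\sum_{\beta\in G} r(\beta)=a^2$. By averaging, some $\beta\in G$ has $r(\beta)\ge a^2/g$; we fix this $\beta$.

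Next we discard the diagonal contribution using the odd order of $G$. Since $g$ is odd, multiplication by $2$ is an automorphism of $G$. Hence the equation $2\alpha=\beta$ has exactly one solution $\alpha\in G$, and so at most one diagonal pair $(\alpha,\alpha)$ with $\alpha\in A$ is counted in $r(\beta)$. It follows that at least $a^2/g-1$ ordered pairs $(\alpha_1,\alpha_2)\in A\times A$ satisfy $\alpha_1\ne\alpha_2$ and $\alpha_1+\alpha_2=\beta$. Each unordered pair $\{\alpha_1,\alpha_2\}$ with $\alpha_1\neq\alpha_2$ corresponds to exactly two such ordered pairs. Therefore the number of unordered pairs of distinct elements of $A$ with sum $\beta$ is at least $\frac12\bigl(a^2/g-1\bigr)$.

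It remains to check that these unordered pairs are pairwise disjoint. For a fixed $\beta$, each $\alpha_1\in A$ determines its partner $\alpha_2=\beta-\alpha_1$ uniquely. Suppose two distinct pairs $\{\alpha_1,\beta-\alpha_1\}$ and $\{\alpha_1',\beta-\alpha_1'\}$ shared an element. Then either $\alpha_1=\alpha_1'$, or $\alpha_1=\beta-\alpha_1'$, or $\beta-\alpha_1=\alpha_1'$, or $\beta-\alpha_1=\beta-\alpha_1'$. In every case the two pairs coincide, a contradiction. In other words, the pairs with sum $\beta$ are the nontrivial orbits of the involution $x\mapsto\beta-x$ restricted to $A$, so they automatically partition the elements they cover. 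Taking all of these pairs gives $n_{pair}\ge\frac12\bigl(a^2/g-1\bigr)$, as claimed.

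There is no real obstacle in this argument. The only point needing care is the use of odd order to bound the number of diagonal solutions of $2\alpha=\beta$ by one. In a group of even order this count could be as large as $|G[2]|$.
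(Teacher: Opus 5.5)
Your proof is correct and is essentially the paper's argument. Both average the representation counts to get some $\beta$ with $r(\beta)\ge a^2/g$, then use odd order to allow at most one diagonal solution of $2\alpha=\beta$, and finally pair up the remaining solutions as disjoint $2$-cycles of the involution $x\mapsto\beta-x$. The paper phrases this last step directly on $N_\beta$, an involution with at most one fixed point, which is the same count as your ``discard the diagonal, then halve'' bookkeeping.
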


\begin{proof}
For $\beta\in G$ define
\[
n_\beta \ =\ \bigl|\{\alpha\in A:\ \beta-\alpha\in A\}\bigr|
\ =\ \sum_{\gamma\in G}\mathbf 1_A(\gamma)\mathbf 1_A(\beta-\gamma),
\]
so $n_\beta$ counts the number of
ordered  pairs $(\alpha_1,\alpha_2)\in A\times A$ with
$\alpha_1+\alpha_2=\beta$.
Summing over all $\beta$ gives
\[
\sum_{\beta\in G} n_\beta
=\sum_{\beta\in G}\sum_{\gamma\in G}\mathbf 1_A(\gamma)
\mathbf 1_A(\beta-\gamma)
=\sum_{\gamma\in G}\sum_{\beta\in G}\mathbf 1_A(\gamma)\mathbf 1_A(\beta)
=a^2.
\]
Hence $\max_{\beta}n_\beta\ge a^2/g$;
fix $\beta$ with $n_\beta\ge a^2/g$.

Now consider the involution $\alpha\mapsto \beta-\alpha$ on the set
$N_\beta=\{\alpha\in A:\ \beta-\alpha\in A\}$, which has size $|N_{\beta}|=n_\beta$.
Because $|G|$ is odd, multiplication by $2$ is a bijection on $G$,
so the equation $2x=\beta$ has at most one solution.
Thus the involution $\alpha\mapsto \beta-\alpha$ has at most
one fixed point in $N_\beta$.
Therefore $N_\beta$ can be partitioned into disjoint
2-cycles $\{\alpha,\beta-\alpha\}$, except possibly one fixed point.
Consequently $A$ contains at least $(n_\beta-1)/2$ disjoint unordered pairs $\{\alpha,\beta-\alpha\}$ with sum $\beta$.
Using $n_\beta\ge a^2/g$ gives the stated bound.
\end{proof}

Lemma \ref{lem:many_pairs} ensures that any dense
set contains a rich supply of disjoint pairs with a
common sum. We now formalize how to utilize this abundance.
The following proposition details the
pair padding  mechanism,
showing that if the number of such pairs exceeds a linear
function of $k$, we can lift the covering property from the base
cases $k=3,4$ to any larger $k$.
\begin{proposition}
\label{prop:pair_padding}
Let $G$ be a finite abelian group.
Let $A\subseteq G$ and assume $\Gamma_3(A)=\Gamma_4(A)=G$.
Fix an integer $k\ge 3$.
Assume there exists $\beta\in G$ such that $A$ contains at least
\[
n_{pair}\ \ge\ \Bigl\lfloor\frac{k-3}{2}\Bigr\rfloor +4
\]
pairwise disjoint unordered pairs
$\{\alpha_1,\alpha_2\}\subseteq A$ with $\alpha_1+\alpha_2=\beta$.
Then $\Gamma_k(A)=G$.
\end{proposition}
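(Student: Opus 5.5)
We handle the parity of $k-3$ by writing $k=3+2j$ or $k=4+2j$ with $j=\lfloor (k-3)/2\rfloor$, so that in both cases $k=m+2j$ with $m\in\{3,4\}$. Fix an arbitrary target $\alpha\in G$. The idea is to use $j$ of the disjoint $\beta$-pairs as padding contributing $j\beta$, and then to find $m$ further distinct elements of $A$, disjoint from the padding, that sum to $\alpha-j\beta$. Since $\Gamma_m(A)=G$, there are distinct $\gamma_1,\dots,\gamma_m\in A$ with $\gamma_1+\cdots+\gamma_m=\alpha-j\beta$. This $m$-set is fixed first. The pairs are chosen afterwards so that they avoid it.

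The key counting step is the following. The $n_{pair}$ pairs are pairwise disjoint, so each $\gamma_i$ lies in at most one pair. Hence at most $m\le 4$ of the pairs meet $\{\gamma_1,\dots,\gamma_m\}$. At least $n_{pair}-4\ge j$ pairs therefore avoid the $\gamma_i$ entirely. We choose $j$ of these, say $\{\delta_1,\beta-\delta_1\},\dots,\{\delta_j,\beta-\delta_j\}$. Their $2j$ elements are distinct, because the pairs are disjoint and each pair consists of two distinct elements. These elements are also distinct from the $\gamma_i$.

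Taking the union of the chosen pairs with $\{\gamma_1,\dots,\gamma_m\}$ gives $2j+m=k$ pairwise distinct elements of $A$ whose sum is $j\beta+(\alpha-j\beta)=\alpha$. Thus $\alpha\in\Gamma_k(A)$, and since $\alpha$ was arbitrary, $\Gamma_k(A)=G$.

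The only delicate point is the order of choices. Fixing the pairs first would risk a $\gamma_i$ colliding with a padding element. Choosing the $m$-sum first and discarding at most $4$ pairs avoids this, and that is exactly why the hypothesis includes the extra $+4$.
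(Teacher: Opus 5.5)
Your proposal is correct and follows essentially the same argument as the paper's proof. You fix the $m$-sum to $\alpha-j\beta$ first, then discard the at most $m\le 4$ pairs it meets and pad with $j$ of the remaining pairs. The only difference is that you treat the odd and even cases of $k$ together via $m\in\{3,4\}$, where the paper writes them out as two separate cases.
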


\begin{proof}
Fix a target $\alpha\in G$.

\smallskip
\noindent\textbf{Case 1: $k$ is odd.}
Write $k=3+2\ell$ with $\ell=(k-3)/2$.
Since $\Gamma_3(A)=G$, choose distinct
$\alpha_1,\alpha_2,\alpha_3\in A$ such that
\[
\alpha_1+\alpha_2+\alpha_3=\alpha-\ell\beta.
\]
Let $\mathcal M$ be the given family of $t$ disjoint
$\beta$--pairs in $A$.
Because the pairs in $\mathcal M$ are disjoint,
each element of $A$ belongs to  at most one  pair in $\mathcal M$.
Hence removing the three vertices
$\{\alpha_1,\alpha_2,\alpha_3\}$ eliminates at
most three pairs from $\mathcal M$.
Since $n_{pair}\ge \ell+4$, there remain at least
$\ell$ pairs in $\mathcal M$ disjoint from $\{\alpha_1,\alpha_2,\alpha_3\}$.
Pick $\ell$ such pairs, say $\{\gamma_i,\eta_i\}$ for $1\le i\le \ell$.
Then all $k$ elements $\alpha_1,\alpha_2,\alpha_3,\gamma_1,\eta_1,\dots,\gamma_\ell,\eta_\ell$
are pairwise distinct and their sum is
\[
(\alpha_1+\alpha_2+\alpha_3)+\sum_{i=1}^\ell(\gamma_i+\eta_i)=
(\alpha-\ell\beta)+\ell\beta=\alpha.
\]
Thus $\alpha\in\Gamma_k(A)$.

\smallskip
\noindent\textbf{Case 2: $k$ is even.}
Write $k=4+2\ell$ with $\ell=(k-4)/2$.
Since $\Gamma_4(A)=G$, choose distinct
$\alpha_1,\alpha_2,\alpha_3,\alpha_4\in A$ such that
\[
\alpha_1+\alpha_2+\alpha_3+\alpha_4=\alpha-\ell\beta.
\]
As above, removing $\{\alpha_1,\alpha_2,\alpha_3,\alpha_4\}$
eliminates at most four pairs from $\mathcal M$,
so $n_{pair}\ge \ell+4$ guarantees at least $\ell$ pairs disjoint from $\{\alpha_1,\alpha_2,\alpha_3,\alpha_4\}$.
Pick such $\ell$ pairs $\{\gamma_i,\eta_i\}$ and conclude as in Case~1.
\end{proof}

Finally, we apply this machinery to the general
case where the smallest prime divisor is at least $7$.
By combining the pair density estimate from Lemma
\ref{lem:many_pairs} with the lifting property of Proposition \ref{prop:pair_padding}, we demonstrate that a
density of approximately $5/13$ is sufficient to control
the critical number for   $3\leq k\leq g/p(G)-2$.
\begin{lemma}
\label{lem:Cgeq7}
Let $G$ be a finite abelian group of order
$g\ge 1235$ with smallest
prime divisor $p(G)\geq7$.
Then for every integer $k$ in the   range
\[
3\ \le\ k\ \le\ \frac{g}{p(G)}-2,
\]
the $k$-critical number satisfies the  following upper bound
\[
\mu_k(G)\ \le\ \Bigl\lfloor \frac{5g}{13}\Bigr\rfloor +3.
\]
\end{lemma}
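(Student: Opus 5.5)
Here $g$ is odd, since $p(G)\ge 7$, so $5\nmid g$ and $c(g)=5/13$. Let $A\subseteq G$ with $|A|=a\ge\lfloor 5g/13\rfloor+3$. The plan is to combine Corollary~\ref{cor:odd} (for the base lengths $k=3,4$) with the pair-padding mechanism of Proposition~\ref{prop:pair_padding}, whose pair supply comes from Lemma~\ref{lem:many_pairs}. Concretely:
\begin{itemize}
\item For $k=3$ (so $t=0$), we have $a>5g/13$, hence Corollary~\ref{cor:odd}(a) gives $\Gamma_3(A)=G$.
\item For $k=4$ (so $t=1$), we have $a-1\ge\lfloor 5g/13\rfloor+2>5g/13$, hence $\Gamma_4(A)=G$.
\end{itemize}
Both hypotheses of Proposition~\ref{prop:pair_padding} on short sums are therefore satisfied uniformly for this $A$.

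Next I will produce many disjoint pairs with a common sum. Lemma~\ref{lem:many_pairs} (valid because $g$ is odd) gives some $\beta$ together with at least
\[
n_{pair}\ \ge\ \tfrac12\bigl(a^2/g-1\bigr)\ \ge\ \tfrac12\Bigl(\tfrac{25}{169}g-1\Bigr)
\]
disjoint $\beta$-pairs in $A$. Proposition~\ref{prop:pair_padding} requires $n_{pair}\ge\lfloor (k-3)/2\rfloor+4$. Since $k\le g/p(G)-2\le g/7-2$, it suffices to show
\[
\tfrac12\Bigl(\tfrac{25}{169}g-1\Bigr)\ \ge\ \tfrac{g/7-5}{2}+4,
\qquad\text{i.e.}\qquad
\Bigl(\tfrac{25}{169}-\tfrac17\Bigr)g\ \ge\ 4.
\]
Here $\tfrac{25}{169}-\tfrac17=\tfrac{175-169}{1183}=\tfrac{6}{1183}$, so the condition is $g\ge 4\cdot1183/6\approx 789$. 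This holds because $g\ge1235$. Proposition~\ref{prop:pair_padding} then yields $\Gamma_k(A)=G$ for every $k$ in the range, i.e.\ $\mu_k(G)\le\lfloor 5g/13\rfloor+3$.

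The main obstacle is this numerical step. The margin between the density $5/13$ squared and $1/p(G)\le 1/7$ is thin ($25/169$ versus $1/7$), so the argument works only because $p(G)\ge7$. I would double-check it against the exact forms $\lfloor (k-3)/2\rfloor\le (k-3)/2$ and $a\ge 5g/13$ (which holds since $a>5g/13$), rather than rely on the crude approximation. No further appeal to Lemma~\ref{lem:comp} is needed.
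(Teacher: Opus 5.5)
Your proof is correct and follows the paper's argument. Corollary~\ref{cor:odd} gives $\Gamma_3(A)=\Gamma_4(A)=G$, Lemma~\ref{lem:many_pairs} supplies the disjoint $\beta$-pairs, and Proposition~\ref{prop:pair_padding} pads up to length $k$; the only cosmetic difference is that you use the cruder bound $a\ge 5g/13$ (the paper uses $a\ge 5g/13+2$) and plug in $p(G)\ge 7$ directly, reducing to $(6/1183)\,g\ge 4$, which still holds since $g\ge 1235$.
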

\begin{proof}
Fix $k$ with $3\le k\le g/p(G)-2$, and let $A\subseteq G$ have size
$
a=|A|\ \ge\ \lfloor \frac{5g}{13}\rfloor+3.
$
Then $a>5g/13+1$,
hence Corollary~\ref{cor:odd} gives
$
\Gamma_3(A)=
\Gamma_4(A)=G.
$
By Lemma~\ref{lem:many_pairs},
there exists $\beta\in G$ such that $A$ contains at least
\[
n_{pair}\ \ge\ \frac{1}{2}\Bigl(\frac{a^2}{g}-1\Bigr)
\]
pairwise disjoint unordered $\beta$--pairs.
Since $a\ge 5g/13+2$, we have
\[
\frac{a^2}{g}\ \ge\ \frac{(5g/13+2)^2}{g}
= 25g/169 + 20/13 + \frac{4}{g}.
\]
Hence
\[
n_{pair}\ \ge\ \frac{1}{2}\Bigl(25g/169 + 20/13 -1\Bigr).
\]
We claim that
\begin{equation}\label{eq:pair_enough}
n_{pair}\ \ge\ \Bigl\lfloor\frac{k-3}{2}\Bigr\rfloor +4.
\end{equation}
Indeed, since $k\le g/p(G)-2$, we have
\[
\Bigl\lfloor\frac{k-3}{2}\Bigr\rfloor
\ \le\ \frac{1}{2}\Bigl(\frac{g}{p(G)}-5\Bigr),
\]
so it suffices to prove
\[
\frac{1}{2}\Bigl(25g/169 + 20/13 -1\Bigr)\ \ge\ \frac{1}{2}\Bigl(\frac{g}{p(G)}-5\Bigr)+4,
\]
equivalently
\[
\Bigl(25/169-\frac{1}{p(G)}\Bigr)g\ \ge\ 4(1-5/13).
\]
The last inequality holds for all $p(G)\geq7$.
This proves \eqref{eq:pair_enough}.
Now Proposition~\ref{prop:pair_padding} applies
(with this $\beta$ and $n_{pair}$), yielding $\Gamma_k(A)=G$.
Since $A$ was arbitrary of size at least $\lfloor 5g/13\rfloor+3$,
we conclude
$\mu_k(G)\le \lfloor 5g/13\rfloor+3$.
\end{proof}

Collecting Lemmas \ref{lem:even}, \ref{lem:p3_k4_k5},
\ref{lem:odd3}, \ref{lem:odd5} and
\ref{lem:Cgeq7}, we have arrived at Theorem B,
which is restated below.

\medskip
\noindent\textbf{Theorem B.}\textit{
Let $G$ be a finite abelian group of order $g$.
Let $k$ be an integer satisfying the range conditions associated with the smallest prime divisor $p(G)$ of $g$ such that $3 \le k \le g/p(G)-2$.
\begin{enumerate}[label=\textup{(\roman*)}, leftmargin=2em]
    \item If $g$ is even and $g\ \ge\ 624\,|G[2]|\;+\;1846$, then the $k$-critical number is exactly
    \[
    \mu_k(G) = \frac{g}{2} + 1.
    \]
    \item If $g$ is odd, define the density constant
    \[
    c(g) = \begin{cases}
    2/5 & \text{if } 5 \mid g, \\
    5/13 & \text{if } 5 \nmid g.
    \end{cases}
    \]
Then the $k$-critical number satisfies the upper bound
\[
\mu_k(G) \le \begin{cases}
  \bigl\lfloor c(g)\,g\bigr\rfloor\ +\ 9 & \text{if } p(G)=3~\text{and}~g\geq3\cdot46319, \\[2pt]
\bigl\lfloor c(g)\,g\bigr\rfloor\ +\ 21 & \text{if } p(G)=5~\text{and}~g\geq5\cdot1235, \\[2pt]
\bigl\lfloor c(g)\,g\bigr\rfloor\ +\ 3 & \text{if } p(G)\geq7~\text{and}~g\geq1235. \\[2pt]
    \end{cases}
    \]
\end{enumerate}}

\section{Proof of the Conjecture on MDS Elliptic Codes}
\label{sec:mds-elliptic}
We now apply Theorem~A to settle a conjecture of
Han and Ren~\cite{HR24} on the maximal
length of MDS elliptic codes. Throughout this section,
$q$ is a prime power, $\mathbb{F}_q$ denotes the finite
field of order $q$,
$E/\mathbb{F}_q$ is an elliptic
curve, and we write
$
G=E(\mathbb{F}_q)
$
for the finite abelian group of
$\mathbb{F}_q$-rational points;
e.g. see \cite{Sti09,Si09,HR24} for more details.
Let $P=\{P_1,\dots,P_n\}\subseteq G$ be a set of pairwise
distinct points and let $D$ be an
$\mathbb{F}_q$-rational divisor on $E$ with
$\mathrm{Supp}(D)\cap P=\varnothing$.
Write $\mathcal{L}(D)$ for the Riemann--Roch space
of $D$. The (evaluation) elliptic code associated to $(E,P,D)$ is
\[
C_\mathcal{L}(D,P)
=\Bigl\{(f(P_1),\dots,f(P_n))\in \mathbb{F}_q^n :
 f\in \mathcal{L}(D)\Bigr\}
 \subseteq \mathbb{F}_q^n.
\]
When $0<\deg(D)<n$, the evaluation map is injective and $\dim_{\mathbb{F}_q}C_\mathcal{L}(D,P)=\deg(D)$.
In this regime we set
$
k=\deg(D).
$
Recall that an $[n,k]$ linear code is called maximum distance separable
(MDS)
if it attains the Singleton bound, i.e.\ its minimum distance
is $n-k+1$; e.g. see \cite{Ball,Sti09}.
Every
$\mathbb{F}_q$-rational divisor $D$ determines
a point $Q_D\in E(\mathbb{F}_q)$; concretely, if
$D=\sum_i n_iQ_i$ then
\[
Q_D=\sum_i [n_i]Q_i,
\]
where $[m]:E\to E$ denotes multiplication by $m$.
The following lemma of Han and Ren
({\cite[Lemma~3.4]{HR24}}) reduces the MDS property
to a restricted sumset condition.

\begin{lemma}
\label{lem:HR-MDS-criterion}
Assume $0<k=\deg(D)<n$. Then $C_\mathcal{L}(D,P)$ is \emph{not}
MDS if and only if $Q_D\in \Gamma_k(P)$.
\end{lemma}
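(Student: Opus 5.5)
The plan is to translate the MDS condition into a statement about functions in $\mathcal L(D)$ with many zeros in $P$. Then use the Abel--Jacobi description of principal divisors on an elliptic curve: a degree-zero divisor $\sum m_iR_i$ is principal if and only if $\sum [m_i]R_i=O$ in the group $E(\overline{\mathbb F}_q)$. Throughout, $O$ is taken as the base point, consistent with the definition of $Q_D$.

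\emph{Reduction to vanishing functions.} Since $0<k<n$, the evaluation map is injective and $\dim C_{\mathcal L}(D,P)=k$. The Singleton bound gives $d\le n-k+1$. Hence the code fails to be MDS if and only if some nonzero codeword has weight at most $n-k$. Equivalently, there is a nonzero $f\in\mathcal L(D)$ vanishing at (at least) $k$ distinct points $P_{i_1},\dots,P_{i_k}$ of $P$.

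\emph{Forward direction.} Given such an $f$, the divisor $\mathrm{div}(f)+D$ is effective of degree $\deg D=k$. Because $\mathrm{Supp}(D)\cap P=\varnothing$, vanishing at the $P_{i_j}$ forces $\mathrm{div}(f)+D\ge P_{i_1}+\cdots+P_{i_k}$. Comparing degrees gives equality, so
\[
\mathrm{div}(f)=P_{i_1}+\cdots+P_{i_k}-D .
\]
Applying the group-sum map, which kills principal divisors, yields $P_{i_1}+\cdots+P_{i_k}=Q_D$. This exhibits $Q_D\in\Gamma_k(P)$.

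\emph{Converse.} Suppose $Q_D=P_{i_1}+\cdots+P_{i_k}$ with the $P_{i_j}$ distinct. The divisor $D'=P_{i_1}+\cdots+P_{i_k}-D$ is $\mathbb F_q$-rational, has degree $0$, and has group sum $O$, so it is principal over $\overline{\mathbb F}_q$. To get a function defined over $\mathbb F_q$, I will use that $\dim_{\mathbb F_q}\mathcal L(-D')$ equals the $\overline{\mathbb F}_q$-dimension for a rational divisor, which is $1$. This produces a nonzero $f\in\mathcal L(D)$ defined over $\mathbb F_q$ with $\mathrm{div}(f)=D'$. Such an $f$ vanishes at the $k$ points $P_{i_j}$, so its codeword has weight at most $n-k$ and the code is not MDS.

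\emph{Main obstacle.} The delicate points are bookkeeping rather than depth. One is that $Q_D$ really lies in $E(\mathbb F_q)$: the points $Q_i$ in $D$ may be non-rational, but $D$ is Galois-stable, so the sum is Galois-fixed. The other is the descent of the function $f$ to $\mathbb F_q$ in the converse. I expect to handle both by the standard rationality facts for Riemann--Roch spaces (e.g.\ \cite{Sti09}), as in \cite[Lemma~3.4]{HR24}.
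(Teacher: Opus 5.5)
Your proof is correct: the reduction to a nonzero $f\in\mathcal L(D)$ vanishing at $k$ points of $P$, the degree comparison forcing $\mathrm{div}(f)=P_{i_1}+\cdots+P_{i_k}-D$, Abel's theorem in both directions, and the descent of the one-dimensional space $\mathcal L(-D')$ from $\overline{\mathbb F}_q$ to $\mathbb F_q$ are all sound. The paper gives no proof of its own here and simply imports the statement from Han--Ren \cite[Lemma~3.4]{HR24}; your argument is the standard one behind that criterion, so it supplies what the citation stands for.
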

Han and Ren \cite[Theorem 1.2]{HR24} proved
that for $3<k<|P|-3$ one has
$|P|\le (|E(\mathbb{F}_q)|+5)/2$,
and they asked whether the constant term can be removed for large $q$.
More precisely, they proposed the following.

\begin{conjecture}[{\cite[Conjecture~3.7]{HR24}}]
\label{conj:37}
Let $C_\mathcal{L}(D,P)$ be an MDS elliptic code over $\mathbb{F}_q$.
If $q$ is sufficiently large and
$3\le \deg(D)\le |P|-3$, then
\[
|P|\le \frac{|E(\mathbb{F}_q)|}{2}.
\]
\end{conjecture}
We now prove Conjecture~\ref{conj:37} as a direct
application of Theorem~A. For this purpose, we need to apply
the Hasse--Weil bound, e.g. see \cite[Theorem 5.2.3]{Sti09}.

\begin{lemma}
\label{lem:Hasse}
For every elliptic curve $E/\mathbb{F}_q$ we have
\[
q+1-2\sqrt{q}\ \le\ |E(\mathbb{F}_q)|\ \le\ q+1+2\sqrt{q}.
\]
\end{lemma}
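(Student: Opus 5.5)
The Hasse--Weil bound (Lemma~\ref{lem:Hasse}) is the classical Hasse theorem for elliptic curves over finite fields. Since it is a standard cited result, I would not reprove it from scratch. Still, the natural proof to sketch goes through the Frobenius endomorphism and positivity of the degree quadratic form.

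Let $\phi:E\to E$ be the $q$-power Frobenius endomorphism, $(x,y)\mapsto(x^q,y^q)$. Then $E(\mathbb{F}_q)=\ker(1-\phi)$.

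\textbf{Step 1: separability.} The key first step is to show that $1-\phi$ is separable. Its pullback on the invariant differential $\omega$ is $(1-\phi)^*\omega=\omega-\phi^*\omega=\omega$, because $\phi^*\omega=0$ in characteristic $p$. A separable isogeny has kernel of size equal to its degree, so
\[
|E(\mathbb{F}_q)|=\deg(1-\phi).
\]

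\textbf{Step 2: the degree form.} Next I use that $\deg:\mathrm{End}(E)\to\mathbb{Z}$ is a positive definite quadratic form. Equivalently, the map
\[
\langle\psi,\chi\rangle=\deg(\psi+\chi)-\deg\psi-\deg\chi
\]
is bilinear, and $\deg\psi\ge 0$ with equality only for $\psi=0$. Bilinearity follows from the dual isogeny identity $\deg\psi=\widehat\psi\psi$ together with the additivity of the dual, $\widehat{\psi+\chi}=\widehat\psi+\widehat\chi$.

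\textbf{Step 3: Cauchy--Schwarz.} For a positive semidefinite form, Cauchy--Schwarz gives
\[
\bigl|\deg(1-\phi)-\deg 1-\deg\phi\bigr|\le 2\sqrt{\deg(1)\,\deg(\phi)}.
\]
Here $\deg 1=1$ and $\deg\phi=q$. Substituting $|E(\mathbb{F}_q)|=\deg(1-\phi)$ from Step~1 yields
\[
\bigl|\,|E(\mathbb{F}_q)|-q-1\,\bigr|\le 2\sqrt{q},
\]
which is exactly the two-sided bound claimed.

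\textbf{Main obstacle.} The hardest ingredients are the separability of $1-\phi$ (Step~1) and the additivity of the dual isogeny needed for bilinearity (Step~2). Both are foundational facts from Silverman's treatment, and I would cite them rather than derive them. The Cauchy--Schwarz step itself is routine: it is the discriminant condition for the nonnegative quadratic $\deg(m-n\phi)$ in integers $m,n$.
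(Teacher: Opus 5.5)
Your sketch is correct, but the paper does not argue this way; in fact it gives no proof at all. It cites the Hasse--Weil bound in Stichtenoth's form (Theorem 5.2.3 there): $|N-(q+1)|\le 2g\sqrt{q}$ for the number $N$ of degree-one places of a function field of genus $g$ over $\mathbb{F}_q$. Specializing to $g=1$ gives the lemma, since the degree-one places of the function field of $E$ correspond to the points of $E(\mathbb{F}_q)$. That source proves the bound via the zeta function and Bombieri's Riemann--Roch counting argument, which works uniformly in every genus and uses no structure of $\mathrm{End}(E)$.

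You instead give Hasse's genus-one argument, as in Silverman. Your steps are: $E(\mathbb{F}_q)=\ker(1-\phi)$; separability of $1-\phi$ from $(1-\phi)^*\omega=\omega$; and Cauchy--Schwarz for the positive definite degree form, obtained from the nonnegativity of $\deg(m-n\phi)$ for all integers $m,n$. Your route is shorter and self-contained for elliptic curves. Its cost is importing isogeny theory: the invariant-differential criterion for separability, $\#\ker\psi=\deg\psi$ for separable $\psi$, and additivity of the dual isogeny. The cited route buys generality to all curves.

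Either one supplies what the paper actually uses, namely the lower bound $|E(\mathbb{F}_q)|\ge(\sqrt{q}-1)^2$ in Lemma~\ref{lem:q-threshold-MDS}.
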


\begin{lemma}
\label{lem:q-threshold-MDS}
If $q\ge 47089$, then $|E(\mathbb{F}_q)|\ge 46656$ for every elliptic curve $E/\mathbb{F}_q$.
In particular, $|E(\mathbb{F}_q)|$ satisfies the size hypotheses of Theorem~A.
\end{lemma}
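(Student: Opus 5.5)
The plan is to combine the Hasse--Weil lower bound of Lemma~\ref{lem:Hasse} with a short case check against the four numerical hypotheses of Theorem~A.

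\textbf{Lower bound.} Write $N=|E(\mathbb{F}_q)|$. By Lemma~\ref{lem:Hasse},
\[
N\ \ge\ q+1-2\sqrt q\ =\ (\sqrt q-1)^2 .
\]
The map $x\mapsto(\sqrt x-1)^2$ is increasing for $x\ge1$, so for $q\ge 47089=217^2$ we get
\[
N\ \ge\ 216^2\ =\ 46656 .
\]
This proves the first assertion. The threshold was evidently chosen so that $\sqrt{47089}=217$ is an integer.

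\textbf{Odd $N$.} Since $46656\ge 46319$, the strongest odd-order requirement is met. The cases $p(G)=3$ and $p(G)=5$ need only $N\ge 3705$ and $N\ge 6175$, which also hold. So every odd case of Theorem~A is covered directly.

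\textbf{Even $N$.} Theorem~A needs
\[
N\ \ge\ 624\,|G[2]|+1846 .
\]
The key input is the standard structure theorem for the group of rational points: $E(\mathbb{F}_q)\cong \mathbb{Z}/n_1\times\mathbb{Z}/n_2$. Its $2$-torsion is therefore contained in $E[2]\cong(\mathbb{Z}/2)^2$, so $|G[2]|\le 4$. Hence the requirement is at most $624\cdot4+1846=4342$, far below $46656$. The only point needing care is justifying $|G[2]|\le4$, and this is a classical fact we would simply cite; in characteristic $2$ the bound improves to $|E[2](\overline{\mathbb{F}}_q)|\le 2$.

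Combining the three parts, $N$ satisfies the hypothesis of Theorem~A in every case. This is all the statement requires.
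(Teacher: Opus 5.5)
Your proposal is correct and follows essentially the same route as the paper. Hasse--Weil gives $|E(\mathbb{F}_q)|\ge(\sqrt q-1)^2\ge 216^2=46656$, and $|G[2]|\le 4$ then settles the even-order condition; your explicit check of the odd-order thresholds $3705$, $6175$, $46319$ is a small step the paper leaves implicit.
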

\begin{proof}
By Lemma~\ref{lem:Hasse}, we have $|E(\mathbb{F}_q)|\ge q+1-2\sqrt{q}=(\sqrt{q}-1)^2$.
For $q\ge 47089=217^2$ this lower bound is at least $(217-1)^2=216^2=46656$.
Moreover, the $2$-torsion subgroup $E(\mathbb{F}_q)[2]$ has size at most $4$.
Hence $|E(\mathbb{F}_q)|\ge 46656$ also implies
the even-order size condition in Theorem~A.
\end{proof}

\begin{theorem}
\label{thm:HanRen-conj37}
Let $q\ge 47089$ and let $E/\mathbb{F}_q$ be an
elliptic curve. Let $P\subseteq E(\mathbb{F}_q)$ and let
$D$ be an $\mathbb{F}_q$-rational divisor on $E$
with $\mathrm{Supp}(D)\cap P=\varnothing$.
If $C_\mathcal{L}(D,P)$ is MDS and
$
3\le \deg(D)\le |P|-3,
$
then
\[
|P|\le \frac{|E(\mathbb{F}_q)|}{2}.
\]
In particular, Conjecture~\ref{conj:37} holds.
\end{theorem}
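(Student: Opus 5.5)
The plan is to argue by contradiction. Suppose $C_\mathcal{L}(D,P)$ is MDS, $3\le k=\deg(D)\le |P|-3$, and $|P|>|E(\mathbb{F}_q)|/2$. We will show that $\Gamma_k(P)=G$, which contradicts Lemma~\ref{lem:HR-MDS-criterion}.

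First we check the dimension hypothesis of Lemma~\ref{lem:HR-MDS-criterion}. With $n=|P|$ we have $0<3\le k\le n-3<n$. Hence $\dim C_\mathcal{L}(D,P)=k$, and the lemma says the code is not MDS if and only if $Q_D\in\Gamma_k(P)$. Since $Q_D\in E(\mathbb{F}_q)=G$, the MDS assumption forces $Q_D\notin\Gamma_k(P)$, and in particular $\Gamma_k(P)\neq G$.

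Second, we verify that Theorem~A applies to $G=E(\mathbb{F}_q)$ with $g=|G|$. By Lemma~\ref{lem:q-threshold-MDS}, $q\ge 47089$ gives $g\ge 46656$, and we compare this with each case of Theorem~A.
\begin{itemize}
\item If $p(G)=2$, then $|G[2]|\le 4$, because $E[2]\cong(\mathbb{Z}/2\mathbb{Z})^2$ over the algebraic closure (or is smaller in characteristic $2$). So $624|G[2]|+1846\le 4342\le 46656$.
\item If $p(G)\in\{3,5\}$, the thresholds $3705$ and $6175$ are below $46656$.
\item If $p(G)\ge 7$, the threshold $46319$ is also below $46656$.
\end{itemize}
This bookkeeping is the only place requiring care. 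The main point to justify is the bound $|E(\mathbb{F}_q)[2]|\le 4$, which is standard.

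Finally, we apply Theorem~A to $A=P$. We have $|P|>g/2$, and $k$ satisfies $3\le k\le |P|-3$, so Theorem~A gives $\Gamma_k(P)=G\ni Q_D$. This contradicts the conclusion of the first step, so $|P|\le |E(\mathbb{F}_q)|/2$. Taking $q\ge 47089$ as the meaning of ``sufficiently large'' then yields Conjecture~\ref{conj:37}.
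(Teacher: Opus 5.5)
Your proof is correct and follows the paper's argument: assume $|P|>|E(\mathbb{F}_q)|/2$, use Lemma~\ref{lem:q-threshold-MDS} to place $G=E(\mathbb{F}_q)$ within the size hypotheses of Theorem~A, conclude $\Gamma_k(P)=G\ni Q_D$, and contradict the MDS property via Lemma~\ref{lem:HR-MDS-criterion}. Your explicit checks that $0<k<n$ holds and that each threshold case is met (e.g.\ $624\cdot 4+1846=4342\le 46656$) just spell out details the paper leaves implicit.
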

\begin{proof}
Put $G=E(\mathbb{F}_q)$ and $g=|G|$.
Set $A=P\subseteq G$ and $k=\deg(D)$.
By Lemma~\ref{lem:q-threshold-MDS}, the group $G$ satisfies the hypotheses of Theorem~A.
Suppose for contradiction that $|A|>g/2$.
Since $3\le k\le |A|-3$, Theorem~A yields $\Gamma_k(A)=G$.
In particular $Q_D\in \Gamma_k(P)$.
By Lemma~\ref{lem:HR-MDS-criterion},
this implies that $C_\mathcal{L}(D,P)$ is
not MDS, a contradiction. Hence $|P|\le |E(\mathbb{F}_q)|/2$.
\end{proof}

\begin{remark}
Han and Ren note that when $|E(\mathbb{F}_q)|$ is even there exist non-trivial MDS elliptic codes of
length $|E(\mathbb{F}_q)|/2$
(see \cite[Subsection 3.4]{HR24} and the references therein),
so the bound in Theorem~\ref{thm:HanRen-conj37} is best possible in general.
\end{remark}

\end{document}